\def\today{\ifcase\month\or
  January\or February\or March\or April\or May\or June\or
  July\or August\or September\or October\or November\or December\fi
  \space\number\day, \number\year}
\DeclareMathOperator{\sgn}{\mathrm{sgn}}
\DeclareMathOperator{\supp}{\mathrm{supp}}
\newtheorem{theorem}{Theorem}
\newtheorem{lemma}[theorem]{Lemma}
\newtheorem{proposition}[theorem]{Proposition}
\newtheorem{remark}{Remark}
\newtheorem{definition}{Definition}
\newcommand{\A}{\mathcal{A}}
\newcommand{\B}{\mathcal{B}}
\newcommand{\E}{\mathcal{E}}
\renewcommand{\S}{\mathcal{S}}
\newcommand{\T}{\mathcal{T}}
\renewcommand{\H}{\mathbb{C}^+}
\newcommand{\n}{\mathbb{N}}
\newcommand{\z}{\mathbb{Z}}
\newcommand{\q}{\mathbb{Q}}
\renewcommand{\r}{\mathbb{R}}
\newcommand{\cp}{\mathbb{C}} 
\newcommand{\im}{{\rm Im}\,}
\newcommand{\re}{{\rm Re}\,}
\newcommand{\ft}{\widehat}
\newcommand{\bo}{\boldsymbol}
\newcommand{\wt}{\widetilde}
\newcommand{\br}{{\bo r}}
\newcommand{\la}{\lambda}
\newcommand{\ga}{\gamma}
\newcommand{\al}{\alpha}
\newcommand{\be}{\beta}
\newcommand{\ep}{\varepsilon}
\newcommand{\si}{\sigma}
\newcommand{\p}{\varphi}
\renewcommand{\d}{\mathrm{d}}
\newcommand{\w}{\ov{w}}
\newcommand{\1}{{\bf 1}}
\newcommand{\apc}{\mathrm{AP}(\H)}
\newcommand{\fs}{\mathrm{FS}}
\newcommand{\afs}{\mathrm{AFS}}
\renewcommand{\E}{\mathbb{E}}
\renewcommand{\emph}[1]{{\bf #1}}
\newcommand{\ov}[1]{\overline{#1}}
\newcommand{\apr}{{\rm AP}(\r)}
\newcommand{\bt}{{\rm BT}}
\newcommand{\udps}{{\rm UDPS}}
\newcommand{\rrtp}{\mathrm{RRTP}}
\newcommand{\cm}{\rm CM}
\newcommand{\acm}{\rm ACM}
\renewcommand{\1}{{\bo 1}}
\newcommand{\del}{{\bo \delta}}
\renewcommand{\Re}{{\rm Re}\,}
\renewcommand{\Im}{{\rm Im}\,}
\newcommand{\res}[1]{ \mathop{\mathrm{Res}}_{z = #1}\,}
\newcommand{\spec}{{\rm spec}}
\renewcommand{\a}{a(\cdot)}
\newcommand{\h}{\mathfrak{H}}
\begin{document}


\title[]{A classification of Fourier summation formulas and crystalline measures}
\author[Gon\c{c}alves]{Felipe Gon\c{c}alves}
\date{\today}
\subjclass[2010]{}
\keywords{}
\address{The University of Texas at Austin, 2515 Speedway, Austin, TX 78712, USA  \newline   {\&} IMPA - Instituto de Matemática Pura e Aplicada, Rio de Janeiro, 22460-320, Brazil.}
\email{felipe.ferreiragoncalves@austin.utexas.edu}
\email{goncalves@impa.br}
\allowdisplaybreaks


\begin{abstract}
We completely classify Fourier summation formulas, and in particular, all crystalline measures with quadratic decay. Our classification employs techniques from almost periodic functions, Hermite-Biehler functions, de Branges spaces and Poisson representation. We show how our classification generalizes recent results of Kurasov \& Sarnak and Olevskii \& Ulanovskii. As an application, we give a new classification result for nonnegative measures with uniformly discrete support that are bounded away from zero on their support. Moreover, we give a new construction using eta-quotients, generalizing an old example of Guinand.
\end{abstract}


\maketitle


\section{Introduction}

To understand the atomic structure of a crystal (of some material) one can fire a beam of electrons into the object and observe an electron diffraction pattern on a screen\footnote{This technique is called high resolution electron microscopy (HRTEM), but another common technique is X-ray microscopy.}. In most materials one observes a periodic diffraction pattern, such as a  two-dimensional lattice with $6$-fold symmetry, as in Tantalum pentoxide ${\rm Ta}_2{\rm O}_5$. However, this is not the case with Holmium-magnesium-zinc Ho-Mg-Zn, where one observes some $5$-fold and $10$-fold symmetries, hence a non-periodic diffraction pattern (see \cite[Figures 1 and 2]{dan}). 
The first to record such phenomena experimentally was materials scientist Dan Shechtman, in 1982, and today these materials are called quasicrystals. He was awarded the Nobel Prize in Chemistry in 2011 for his breakthrough. Mathematically, if $\mu$ represents the crystal (so $\mu\geq 0$ and $\supp(\mu)$ is uniformly discrete) then what we see in the diffraction pattern is effectively $|\ft \mu|^2$ (where $\ft \mu$ is the Fourier transform of $\mu$). A famous inverse problem is that of reconstructing atomic structure from diffraction data, overcoming the issue that diffraction only determines intensity of spots but loses phase information. Real-world quasicrystals (such as Decagonite $\text{Al}_{71}\text{Ni}_{24}\text{Fe}_5$), although uniformly discrete in physical space, typically have everywhere dense diffraction spectrum\footnote{We thank Michael Baake for pointed this out, among several other remarks.}.  The diffraction pattern is a super-position
of Dirac combs with several
intensities (Bragg peaks) however, due to instrument precision and tuning, we only see peaks that have, say, an intensity above $10^{-4}$ of the brightest peak. Hence, in real physical experiments, although the diffraction picture visually seems to be locally finite, nevertheless the real measure $\ft \mu$  has a dense support. On the other hand, stable $1$D quasicrystals with locally finite spectra do exist in nature, and can be found as projections along pseudo axis's, see \cite[Figure 1]{QS1D} and the references in \cite[Ch. 7]{DW}. Nowadays there is unified framework to construct measures $\mu$ whose spectrum fits the data observed in the diffraction patterns of physical quasicrystals, these are called model sets (or Meyer sets) \cite{BG2,DW,La,Me95,Me16,Mo}.


From the mathematical standpoint, many mathematicians have wondered about what different types of measures $\mu$ with pure point support and spectrum are there, and what kind of geometrical properties of the spectrum are allowed under various conditions on $\mu$? The main classification question from the mathematical perspective is:
\begin{center}
\it Can on one classify all measures $\mu$ with pure point support and spectrum?
\end{center}
Freeman Dyson addressed this question in 2009 in his famous paper \cite{Dy}, using the word quasicrystals for such measure,  suggesting that such classification might be a very difficult mathematical problem, but made the ``outrageous suggestion" it could potentially give a proof of the Riemann Hypothesis\footnote{We do not share the same belief.}. He said: ``My suggestion is the following. Let us pretend that we do not know that the Riemann Hypothesis is true. Let us tackle the problem from the other end. Let us try to obtain a complete enumeration and classification of one-dimensional quasicrystals. That is to say, we enumerate and classify all point distributions that have a discrete point spectrum."  Dyson stated this because of Guinand's prime summation formula (also known as Weyl's summation formula), which we explain in the Section \ref{sec:exemp}. The notorious paper of Guinand \cite{Gui}, which inspired Dyson, grasps such classification, but it was forgotten awhile. Meyer resurrected Guinand's paper in \cite{Me16}.

We say a measure $\mu$ is \emph{crystalline} if it has locally finite support and its Fourier transform $\ft \mu$ also has locally finite support\footnote{Such measures are sometimes also called quasicrystals, Fourier quasicrystals, or doubly sparse, depending on the author, area, context, decay conditions, etc. We will stick with Meyer's denomination of crystalline measure.}. In this scenario, the first geometric classification breakthrough occurred in the work of Lev \& Olevskii \cite{LO,LO2}, where they classify classical Poisson summation as the only crystalline measures that have uniformly discrete support and spectrum. Guinand's example \cite[p. 265]{Gui} shows the theorem is tight (we generalize his construction in Section \ref{sec:exemp}). Recently, the field is very active, and the amazing works of Radchenko \& Viazovska \cite{RV},  Ramos \& Sousa \cite{RS1}, Kurasov \& Sarnak \cite{KS}, Olevskii \& Ulanovskii \cite{OU} and Alon et. al. \cite{AV,ACV}, but also, the not so recent works of Bohr \cite{Bo}, Besicovitch \cite{Be}, Guinand \cite{Gui}, de Branges \cite{dB} and Meyer \cite{Me}, deeply inspired this paper.

In this paper we completely classify (what we call) Fourier summation formulas, and in particular we classify crystalline measures that have quadratic decay, although our results are more general and can be applied to dense spectra. As far as we know, it is the first time classification results are proven in such generality.  Our classification involves heavily the theory of almost periodic analytic functions \cite{Be} and de Branges spaces techniques \cite{dB}. Almost periodic functions have permeated this are for some time now, for example in the works of Guinand \cite{Gui}, Meyer \cite{Me12} and Baake et. al. \cite{BS, BG2, BST}, among others. On the other hand, the connection with de Branges spaces that we make  here is totally new.

A \emph{Fourier summation formula} should be one where the identity
$$
\int_\r \ft \p(t)\d\mu(t) = \sum_{n\in \n} a(\la_n)\p(\la_n)
$$
holds, at the bear minimum, for all functions $\p \in C^\infty_c(\r)$, where $\{\la_n\}_{n\in \n}\subset \r$ is some sequence of real nodes, $\{a(\la_n)\}_{n\in \n}\subset \cp$ are some complex coefficients and  $\mu$ is some complex-valued measure. There is a zoo of formulas of such type which we describe in Section \ref{sec:exemp}. We choose the above form of because: (1) We want a summation formula, so there must be a sum somewhere in the definition and such summation must be singular with respect to Lebesgue measure; (2)  Allowing the right hand side above to have a vanishing singular part is boring, as otherwise any $L^1$-function would give rise to a Fourier summation formula. 
\vspace{-1mm}
\subsection{Overview} Theorems \ref{thm:maineven} and \ref{cor:1} are our main general classification results and we later specialize them for measures with locally finite support (in particular, also crystalline measures) in Theorems \ref{thm:HBmupos} and \ref{thm:HBmudisc}. Finally, we derive a new characterization Theorem \ref{thm:5} for measures with uniformly discrete support such that $\mu|_{\supp(\mu)}\geq \ep$, but with no geometric assumption on $\ft \mu$, which perhaps makes this result most useful to physics.

\section{Main Results}
We will have to introduce several things before we can state our first main result. We say that a complex-valued measure $\mu$ on $\r$ is \emph{locally finite} if its total variation 
$$
|\mu|(K) = \sup_{\sqcup_{j\in\n} P_j = K} \sum_{j\in\n} |\mu(P_j)| < \infty
$$
is finite on every compact set $K\subset \r$, or equivalently, if there are nonnegative locally finite Borel measures $\mu_{j}$ such that $\mu=\mu_1-\mu_2 + i(\mu_3-\mu_4)$. We say that $\mu$ is \emph{tempered} if it is a complex-valued locally finite measure and there exists a tempered distribution $u\in \S'(\r)$ such that $u(\p)=\int_\r \p \d\mu$ for all $\p\in C^\infty_c(\r)$. Some authors might not concord with this definition, but we are following the setup of \cite{BS}. We say  $\mu$ is \emph{strongly tempered} if $\p\in L^1(|\mu|)$ for all $\p\in \S(\r)$. By \cite[Prop. 2.5]{BS}, this is equivalent to the  \emph{degree} of $\mu$
\begin{align}\label{eq:decaymu}
\deg(\mu):=\min \bigg\{ n\in \z : \int_\r \frac{\d|\mu|(t)}{(1+t^2)^{n/2}}<\infty\bigg\}
\end{align}
being $<\infty$. In particular, the functional $\p \mapsto \int_\r \p \d\mu$ defines a tempered distribution, hence $\mu$ is tempered.  The implication [tempered] $\Rightarrow$ [strongly tempered] is false. Indeed, the measure
$
\mu = \sum_{n\geq 0} 3^{n^2}(\del_{n}-\del_{n+4^{-n^2}})
$
is not strongly tempered but it is tempered, since $\int 2^{-t^2}\d |\mu|(t)\geq  \sum_{n\geq 0} 3^{n^2}2^{-n^2}=\infty$ but\footnote{Whenever it is convenient, we write $A\ll B$ to mean that there is $C>0$ such that $|A|\leq C|B|$.} $|\sum_{n\geq 0} 3^{n^2}(\p({n})-\p({n+4^{-n^2}}))|\ll \|\p '\|_\infty$. 

We say that a function $a:\r\to\cp$ is \emph{locally summable} if $\supp(a):=\{\la\in \r : a(\la)\neq 0\}$ is at most countable and for some enumeration $\supp(a)=\{\la_n\}_{n\geq 1}$ we have 
$$\sum_{\la_n \in [-T,T]} |a(\la_n)|<\infty,$$
for all $T>0$. This is equivalent to say that $\nu=\sum_{n\geq 1} a(\la_n)\del_{\la_n}$ is locally finite. In particular, local sums $\sum_{\la \in K} a(\la)$ are always well-defined for any bounded set $K\subset \r$. 

\begin{definition}[\emph{Fourier Summation Pairs}]
We say $(\mu,a)$ is a Fourier summation pair $(\fs$-pair$)$ if $\mu$ is a strongly tempered measure, $a:\r\to\cp$ is {locally summable} and
$$
\int_\r \ft \p(t)\d\mu(t) = \sum_{\la\in\r} a(\la)\p(\la)
$$
for every $\p\in C^\infty_c(\r)$. Equivalently, $\mu$ is a strongly tempered measure and for some locally summable function $a:\r\to\cp$ we have $\ft \mu|_{C^\infty_c}=\sum_{\la\in\r} a(\la)\del_{\la}$. 
\end{definition}

\noindent  We use the following definition for \emph{Fourier transform}:
$$
\ft \p(\xi) = \int_\r \p(x)e^{-2\pi i x \xi}\d x.
$$
Perhaps the two most simple examples of $\fs$-pairs are
\begin{align*}
 \int_\r \ft \p(t)\d t = \p(0) \ \ (\text{Fourier Inversion}) \ \ \text{ and }  \ \ \sum_{n\in \z} \ft \p(n) = \sum_{n\in \z} \p(n)  \ \ (\text{Poisson Summation}).
\end{align*}

\begin{remark}[Real-Antipodal splitting]
In the following we show that any $\fs$-pair can be split into two real-antipodal $\fs$-pairs. We say that a function $a:\r \to \cp$ is \emph{antipodal} if $a(-\la)=\ov{a(\la)}$ for every $\la\in\r$. We say that an $\fs$-pair $(\mu,a)$ if \emph{real-antipodal} is $\mu$ is real-valued and $\a$ is (and must be) antipodal.  Indeed, if we are given an arbitrary $\fs$-pair $(\mu,a)$ then we can write 
$
a_1(\la) = \frac{a(\la)+\ov{a(-\la)}}{2}, \ \ a_2(\la)=\frac{\ov{a(-\la)}-a(\la)}{2i},
$
and so $a=a_1-ia_2$, with each $a_j$ antipodal, and we can write 
$\mu= \mu_1 - i\mu_2$, where $\mu_1=\Re \mu$ and $\mu_2=\Im \mu$, and so each $\mu_j$ is real-valued. If $\p\in C^\infty_c(\r)$ is antipodal then $\ft \p$ is real, and if we further assume that either $\p$ is real-valued or imaginary-valued, then from the identity
$$
\sum_{\la\in \r} (a_1(\la)-ia_2(\la))\p(\la) = \int_\r \ft \p(t)\d \mu_1(t) -i \int_\r \ft \p(t)\d \mu_2(t) 
$$
and the fact that $\sum_{\la\in \r} a_j(\la)\p(\la) $ is real, one deduces that
$$
\sum_{\la\in \r} a_j(\la)\p(\la) = \int_\r \ft \p(t)\d \mu_j(t)
$$
for $j=1,2$. Since for any $\p\in C^\infty_c(\r)$ we have $\p=\p_1-i\p_2$ where each $\p_j$ is antipodal, and each $\p_j=\Re \p_{j}- i \Im \p_{j}$, where $\Re \p_{j}$ is real and even and $i \Im \p_{j}$ is imaginary and odd (hence both are antipodal as well), we deduce by linearity that $(\mu_j,a_j)$ is a real-antipodal $\fs$-pair for $j=1,2$. As an example, note that
$
\sum_{n \in \z} i^n \ft \p(n) = \sum_{n\in \z} \p(n+1/4)
$
has the real-antipodal splitting 
\begin{align*}
\sum_{n \in \z} (-1)^n \ft \p(2n)  &= \frac12 \sum_{n\in \z} \left(\p(n+1/4) +  \p(n-1/4)\right)\\ 
\sum_{n \in \z} (-1)^n \ft \p(2n+1) & =  \frac{1}{2i}\sum_{n\in \z} \left(\ft \p(n+1/4) - \ft \p(n-1/4)\right).
\end{align*}
\end{remark}

We say that a locally summable function $a:\r\to\cp$ has \emph{exponential growth} if
$$
\sum_{\la\in \r} |a(\la)|e^{-c|\la|}<\infty
$$
for some $c>0$. We let $\H=\{z=x+iy : y>0\}$. Following Besicovitch\footnote{Besicovitch considers the half-plane $\re z>0$ instead of $\Im z>0$, so one has to mentally perform a rotation in order to use his results.} \cite[Ch. 3]{Be}, we say a holomorphic function $f:\H\to\cp$ is \emph{almost periodic} if for every $0<\al<\be<\infty$ and $\ep>0$ there is a relatively dense\footnote{A set $\tau\subset \r$ is relatively dense if there exists $l>0$ such that $(x,x+l)\cap \tau \neq \emptyset$ for every $x\in\r$.} set of $\ep$-translations $\tau \subset \r$
such that 
$$
\sup_{\al < \Im z < \be} |f(z)-f(z+t)|\leq \ep
$$ 
for every $t\in \tau$. We write $f\in \apc$. If $f\in \apc$ it can be shown that 
$$
\E f(\la) := \lim_{T\to\infty} \frac{1}{2T}\int_{-T+iy}^{T+iy} f(z)e^{-2\pi i \la z}\d z
$$
exists for all $\la\in\r$ and is independent of $y>0$ (see Section \ref{sec:AP}). It is straightforward to see that if instead $f(\cdot +ic)\in \apc$ for some $c>0$, then the above limit exists and is independent of $y>c$. If $f\in \apc$ we define the \emph{spectrum} of $f$ by
$$
\spec(f):=\{\la\in \r : \E f(\la)\neq 0\},
$$ 
and it can be shown that this set is at most countable. We say a holomorphic function $g:\H\to\cp$ is of \emph{bounded type}\footnote{This is the same to say that $\log |g(z)|$ has a nonnegative harmonic majorant.} if there are two bounded holomorphic functions $P,Q:\H \to\cp$ such that $g=P/Q$. The following is the first main result of this paper.
\begin{theorem}\label{thm:maineven}
Let $(\mu,a)$ be a real-antipodal $\fs$-pair. Assume that $\deg(\mu)\leq 2$ and that $\a$ has exponential growth. Then we have:
\begin{enumerate}
\item[(I)] The limit
\begin{align}\label{eq:fdef}
f(z) =\frac{1}2 a(0) +  \lim_{T\to \infty} \sum_{0<\la<T} a(\la) (1-\la/T) e^{2\pi i \la z}
\end{align}
converges uniformly in compact sets of $\H$;
\item[(II)] There is $c_1>0$ such that $f(\cdot+ic_1) \in \apc$;
\item[(III)] There are $c_2,B>0$, with $c_2\geq c_1$, such that for any trigonometric polynomial $p(x)=\sum_{n=1}^N p_n e^{2\pi i \theta_n x}$, with $\{\theta_n\} \subset [0,\infty)$, we have
$$
\limsup_{T\to\infty} \bigg|\frac{1}{2T}\int_{-T}^T f(x+ic_2)\ov{p(x)}\d x\bigg| \leq B \max |p_n|;
$$
\item[(IV)]  $\exp(f)$ is of bounded type.
\end{enumerate}
Conversely, suppose $f:\H \to \cp$ is a given holomorphic function such that items $({\rm II}), ({\rm III})$ and $({\rm IV})$ hold true. Then there is $c\geq 0$ such that the limit
$$
\E f(\la) := \lim_{T\to\infty} \frac{1}{2T}\int_{-T+iy}^{T+iy} f(z)e^{-2\pi i \la z}\d z
$$
exists for every $\la\in\r$ and $y>c$, is independent of $y$, vanishes identically for $\la<0$ and defines a locally summable function with exponential growth such that
$$
\sum_{\la\in \r} |\E f(\la)|e^{-2\pi y |\la|} < \infty
$$
for every $y>c$. Moreover, there is a unique real-valued locally finite measure $\mu$ of degree at most $2$ satisfying
\begin{align}\label{eq:repkerLphiid}
\frac{f(z)+\ov{f(w)}}{z-\w} = \frac{1}{2\pi i} \int_\r \frac{\d \mu(t)}{(z-t)(\w -t)} \quad 
\end{align}
for all $w,z\in \H$. Furthermore, if we let
\begin{align}\label{eq:adef}
a(\la)=\begin{cases} \E f(\la) &\mbox{if } \la>0 \\ 
2\Re \E f(0) &\mbox{if } \la=0 \\
 \ov{\E f(-\la)} &\mbox{if } \la<0
\end{cases}
\end{align}
then $(\mu,a)$ is an real-antipodal $\fs$-pair and identity \eqref{eq:fdef} holds.
\end{theorem}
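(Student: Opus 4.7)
My plan centers on the identification
\[
2\Re f(z)=P[\mu](z):=\tfrac{y}{\pi}\int_\r\frac{\d\mu(t)}{(x-t)^2+y^2},\quad z=x+iy\in\H,
\]
of twice the real part of $f$ with the Poisson integral of $\mu$. Formally, the antipodal Fourier series $\mu\sim\sum_\la a(\la)e^{2\pi i\la t}$ gives $P[\mu](x+iy)=a(0)+\sum_{\la>0}e^{-2\pi\la y}[a(\la)e^{2\pi i\la x}+a(-\la)e^{-2\pi i\la x}]$, whose ``holomorphic half'' is $f(z)=\tfrac12 a(0)+\sum_{\la>0}a(\la)e^{2\pi i\la z}$. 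This exponential series converges absolutely on $\{\Im z>c_1\}$ once $2\pi c_1$ exceeds the exponential growth rate of $a$, and the theorem essentially asserts that this holomorphic half admits a one-sided Ces\`aro realization on all of $\H$ and is characterized by its spectral and analytic structure.

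For the forward direction, absolute convergence on $\{\Im z>c_1\}$ yields (II) via classical Bohr theory (after an $\epsilon/N$ truncation of the tail), and (III) from Bohr's Parseval relation $\E[f(\cdot+ic_2)\ov{p(\cdot)}]=\sum_n\ov{p_n}a(\theta_n)e^{-2\pi\theta_n c_2}$, bounded by $B\max|p_n|$ with $B:=\sum_{\la\ge 0}|a(\la)|e^{-2\pi\la c_2}<\infty$ once $2\pi c_2$ exceeds the growth rate. For (IV), the Jordan decomposition $\mu=\mu_+-\mu_-$ gives $2\Re f\le P[\mu_+]$, a nonnegative harmonic majorant making $\exp(f)$ of bounded type, while $\Re f(iy)/y=(2\pi)^{-1}\int\d\mu(t)/(t^2+y^2)\to 0$ by dominated convergence using $\deg(\mu)\le 2$. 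Establishing (I) on all of $\H$ is the subtle part: I would apply the $\fs$-identity to a mollification of $(1-\la/T)_+\chi_{[0,T]}(\la)e^{2\pi i\la z}$, whose Fourier transform has the form $(2\pi i(t-z))^{-1}+R_T(t)$ with $R_T(t)\to 0$ exponentially thanks to $\Im z>0$, and identify the $T\to\infty$ limit of the Fourier side with the subtracted Cauchy transform $F(z):=(2\pi i)^{-1}\int_\r[(t-z)^{-1}-t/(t^2+1)]\d\mu(t)$, which is holomorphic on $\H$ by $\deg(\mu)\le 2$. Matching $F$ with the absolutely convergent exponential series on $\{\Im z>c_1\}$ identifies $f=F$ modulo the real constant $\tfrac12 a(0)$, proving (I) throughout $\H$.

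For the converse, (IV) places $\Re f$ in the Herglotz--Nevanlinna class: harmonic on $\H$ with a positive harmonic majorant for $(\Re f)^+$ (from bounded type of $e^f$) and no linear growth in $y$ (from $\limsup\Re f(iy)/y=0$). The classical representation then produces a unique real-valued $\mu$ with $\int\d|\mu|/(1+t^2)<\infty$ such that $2\Re f=P[\mu]$, giving $\deg(\mu)\le 2$; identity \eqref{eq:repkerLphiid} extends from the diagonal $w=z$ by unique holomorphic/anti-holomorphic continuation. Bohr coefficients $c_\la:=\E f(\la)$ exist by (II), and matching the Bohr expansion of $2\Re f$ with the Fourier decomposition $\widehat{P[\mu]}(\la,y)=e^{-2\pi|\la|y}\hat\mu(\la)$ forces $c_\la=0$ for $\la<0$ (a non-zero coefficient would contribute an $e^{2\pi|\la|y}$-factor to the $\la$-mass of $\widehat{2\Re f}$ that is incompatible with the $e^{-2\pi|\la|y}$-structure of the Poisson transform, which does not depend on $y$ after cancellation) and yields $c_\la=a(\la)$ for $\la>0$ via \eqref{eq:adef}. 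Local summability and exponential growth of $a$ follow from (III) by an $\ell^\infty$-$\ell^1$ duality on finite truncations of $\spec(f)\cap[0,\infty)$: the choice $p_n=\sgn\ov{a(\theta_n)}$ gives $\sum_n|a(\theta_n)|e^{-2\pi\theta_n c_2}\le B$.

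The $\fs$-pair identity $\int\ft\p\,\d\mu=\sum_\la a(\la)\p(\la)$ for $\p\in C^\infty_c(\r)$ is then verified by Plancherel applied with the matched Bohr/Poisson descriptions of $f$. The main technical obstacle is the extension of (I) throughout $\H$: the Fourier transform of the one-sided Fej\'er weight decays only like $1/|t|$, borderline for a measure of degree $2$, so convergence must be extracted by exploiting the subtraction structure in $F$ and the real-antipodal cancellation in $\mu$ on the Fourier side. On the converse side, the subtlest step is the rigorous matching of the two spectral pictures of $f$---Bohr/almost-periodic on one hand and Poisson-transform-of-$\mu$ on the other---which forces $\E f(\la)$ to vanish for $\la<0$ and to agree with $a(\la)$ from \eqref{eq:adef} for $\la\ge 0$, and thus pins down $\mu$ consistently from the two independent pieces of data contained in (II)--(IV).
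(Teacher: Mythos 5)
Your high-level picture---identify $2\Re f$ with the Poisson integral $P[\mu]$ and read $f$ as its ``holomorphic half''---is the right one and matches the paper's strategy, and your treatments of (II), (III), and (IV) in the forward direction are essentially correct. However, two of the steps you flag as subtle genuinely break down as written, and those are precisely where the paper introduces its decisive devices.

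\emph{Gap 1: item (I) on all of $\H$.} You propose applying the $\fs$-identity to a mollification of $(1-\la/T)_+\chi_{[0,T]}(\la)e^{2\pi i\la z}$ whose Fourier transform is $(2\pi i(t-z))^{-1}$ plus an error. That transform decays only like $|t|^{-1}$, which is \emph{not} integrable against a degree-$2$ measure, and the mollification does not rescue this on the Fourier side. You sense the problem (``borderline\dots subtraction structure\dots antipodal cancellation'') but do not produce a workable test function. The paper's Lemma~\ref{lem:limexist} resolves exactly this by working with the \emph{two-sided} function $g(w,z,\la)$ from \eqref{gspecial}, whose Fourier transform $\frac{1}{2\pi i(\xi-z)(\xi-\bar w)}$ decays like $|\xi|^{-2}$, and by pairing $\la$ with $-\la$ (using $a(-\la)=\ov{a(\la)}$) before multiplying by $z-\bar w$ to isolate $f(z)+\ov{f(w)}$. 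Without this two-variable device, or an equivalent one that pushes the Fourier decay from $|t|^{-1}$ to $|t|^{-2}$, the limit in \eqref{eq:fdef} cannot be reached from the $\fs$-identity.

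\emph{Gap 2: the converse, $\E f(\la)=0$ for $\la<0$.} Your ``incompatibility of $e^{\pm 2\pi|\la|y}$ factors'' argument is not rigorous. The Bohr coefficient of $2\Re f(\cdot+iy)$ at frequency $\la>0$ equals $\E f(\la)e^{-2\pi\la y}+\ov{\E f(-\la)}e^{2\pi\la y}$, and to extract $\E f(-\la)=0$ you need an \emph{a priori} bound on $\sup_x |f(x+iy)|$ (or on its Bohr mean) as $y\to\infty$---which is not provided by (II), (III), or (IV) directly. Indeed, functions like $e^{-2\pi iz}$ belong to $\apc$ on every line and have nontrivial negative spectrum; the hypotheses exclude them, but the exclusion has to be argued. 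The paper extracts this bound via Harnack's inequality applied to the Herglotz parts $f_1,f_2$ (giving $|f(z)|\ll (1+|z|)^2/y$), followed by Phragm\'en--Lindel\"of, which upgrades boundedness on a single line $\Im z=c_1$ to boundedness in a half-plane; boundedness then forces $\spec(f)\subset[0,\infty)$ by Lemma~\ref{lem:fdisc}(i). You need an argument of this force; the spectral ``incompatibility'' heuristic as stated does not supply it.

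Finally, the concluding verification that $(\mu,a)$ is an $\fs$-pair is described only as ``Plancherel'', but this step is where much of the remaining work lives: one has to pass from the Poisson-integral identity at height $\Im z>c_1$ (proved via Bochner approximation and dominated convergence) down to the boundary $y\to0$, which requires the local summability of $a$, the bound $\deg(\mu)\le 2$, and an approximate-identity argument with the Poisson kernel---none of which are sketched in your proposal.
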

The proof we give for Theorem \ref{thm:maineven} in the converse part actually shows a stronger result.

\begin{theorem}\label{cor:1}
Let $f:\H \to \cp$ be a holomorphic function such that properties $({\rm II})$ and $({\rm IV})$ in Theorem \ref{thm:maineven} hold true, but also the following weaker version of property $({\rm III})$: 
\begin{enumerate}
\item[(III$^*$)] There is $c_2\geq c_1$, and for every $M>0$ there is $B_M>0$, such that for any trigonometric polynomial $p(x)=\sum_{n=1}^N p_n e^{2\pi i \theta_n x}$, with $\{\theta_n\} \subset [0,M]$, we have
$$
\limsup_{T\to\infty} \bigg|\frac{1}{2T}\int_{-T}^T f(x+ic_2)\ov{p(x)}\d x\bigg| \leq B_M \max |p_n|;
$$
\end{enumerate}
Then there is $c\geq 0$ such that the limit
$$
\E f(\la) := \lim_{T\to\infty} \frac{1}{2T}\int_{-T+iy}^{T+iy} f(z)e^{-2\pi i \la z}\d z
$$
exists for every $\la\in\r$ and $y>c$, is independent of $y$, vanishes identically for $\la<0$ and defines a locally summable function. Moreover, there is a unique real-valued locally finite measure $\mu$ of degree at most $2$ satisfying \eqref{eq:repkerLphiid}, and if we define $\a$ as in \eqref{eq:adef} then $(\mu,a)$ is an real-antipodal $\fs$-pair and identity \eqref{eq:fdef} holds.
\end{theorem}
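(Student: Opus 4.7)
My plan is to follow the converse-direction argument of Theorem~\ref{thm:maineven}, verifying that the weaker hypothesis (III$^*$) still delivers every conclusion except the exponential growth of $a$. Using (II), I would define $\E f(\la)$ for $y > c_1$ by applying Besicovitch's theory \cite{Be} to $g(z) := f(z+ic_1) \in \apc$: its Bohr mean $\E g(\la)$ exists and is independent of $y > 0$, and pulling back gives the claimed existence and $y$-independence of $\E f(\la)$ for $y > c_1$, with at most countable support $\spec(f)$. Property (IV) then forces $\spec(f) \subset [0,\infty)$: since $\exp(f)$ is of bounded type in $\H$ with mean type $\limsup_{y\to\infty} \Re f(iy)/y = 0$, a Nevanlinna--Krein factorization argument combined with the almost-periodic structure of $g$ gives $\E f(\la) = 0$ for every $\la < 0$.

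To extract local summability of $a$ from (III$^*$), I would test against trigonometric polynomials with unit-sized coefficients. For each $M > 0$, choose an arbitrary finite subset $\{\la_1, \dots, \la_N\} \subset \spec(f) \cap [0, M]$ and apply (III$^*$) to
$$
p(x) \;=\; \sum_{n=1}^N \overline{\sgn(\E f(\la_n))}\, e^{2\pi i \la_n x}, \qquad \max|p_n| \le 1.
$$
Parseval for Bohr--Fejér sums evaluates the mean of $f(x+ic_2)\overline{p(x)}$ as $\sum_{n=1}^N |\E f(\la_n)|\, e^{-2\pi c_2 \la_n}$, which is $\le B_M$ by (III$^*$). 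Taking $N \to \infty$ and multiplying by $e^{2\pi c_2 M}$ yields $\sum_{\la \in \spec(f)\cap[0,M]} |a(\la)| \le B_M e^{2\pi c_2 M}$, i.e.\ local summability.

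Next, I would construct $\mu$. The Nevanlinna--Krein representation for a bounded-type function of mean type zero, applied to $\exp(f)$, yields a unique real-valued signed measure $\mu$ on $\r$ with $\int d|\mu|(t)/(1+t^2) < \infty$ (so $\deg\mu \le 2$) and
$$
\Re f(z) \;=\; \frac{\Im z}{2\pi}\int_\r \frac{d\mu(t)}{|z-t|^2}.
$$
Polarizing in $z$ and $w$---both sides of \eqref{eq:repkerLphiid} are holomorphic in $z$ and antiholomorphic in $w$ and agree on the diagonal $w=z$---recovers the full kernel identity, and Stieltjes inversion along the diagonal gives uniqueness of $\mu$.

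Finally, the Bohr--Fejér theorem applied to $g \in \apc$ with spectrum in $[0,\infty)$ gives uniform-on-compacts convergence in \eqref{eq:fdef}, with $a(0)/2 = \Re \E f(0)$ emerging as the real part of the $\la=0$ term after antipodal symmetrization. To verify the $\fs$-pair identity for $\p \in C_c^\infty(\r)$, I would pair both sides against boundary values of $f$ on a horizontal line $\Im z = y > c$: $\int \ft\p\,d\mu$ via the Poisson representation of $\mu$, and $\sum_\la a(\la)\p(\la)$ via the Bohr--Fejér expansion \eqref{eq:fdef}, then match them. I expect this final matching step to be the main technical obstacle: it requires a careful interchange of limits, justified by the degree-$2$ decay of $\mu$ together with the Schwartz regularity of $\p$, as well as bookkeeping the real-valuedness of $\mu$ (equivalently, the symmetry of the kernel in \eqref{eq:repkerLphiid} under $(z,w) \mapsto (w,z)$ with conjugation).
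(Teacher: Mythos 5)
Your overall architecture matches the paper's sufficiency argument: (II) gives $\E f(\la)$ via Besicovitch, (IV) gives the measure $\mu$ via Nevanlinna factorization of $\exp(f)$, (III$^*$) gives local summability by testing against unit-coefficient polynomials with phases matching $\sgn(\E f(\la))$, and the $\fs$-pair identity is obtained by pairing against a Poisson kernel along $\Im z = y$ and taking $y\to 0$. The local-summability step is fine and is essentially what the paper does.

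However, there is a genuine gap in your treatment of $\E f(\la) = 0$ for $\la < 0$. You write that this follows from ``a Nevanlinna--Krein factorization argument combined with the almost-periodic structure of $g$,'' but the Besicovitch criterion (Lemma~\ref{lem:fdisc}(i)) applies only once one knows $f$ is \emph{bounded} on some upper half-plane $\{\Im z > c\}$, not merely on horizontal strips. The bounded-type hypothesis on $\exp(f)$ with mean type zero does not constrain the growth of $|f|$ directly (recall $\exp(f)$ bounded type gives no polynomial bound on $f$: $\Im f$ is a priori uncontrolled even if $\Re f$ has a Poisson representation). The paper closes this gap by decomposing $f = f_1 - f_2 + ih$ with $\Re f_j \ge 0$, applying Harnack's inequality to each $f_j$ to obtain the polynomial bound
$$
|f(z)| \;\le\; \frac{(1+|z|)^2}{4\,\Im z}\int_\r \frac{\d|\mu|(t)}{2\pi(1+t^2)},
$$
and only then invoking Phragm\'en--Lindel\"of (as in \cite[Thm.~1]{dB}) together with boundedness of $f$ on a horizontal line (from almost periodicity) to conclude boundedness on the half-plane above that line. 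Without this Harnack estimate your argument does not control $|\Im f|$, and a bounded-type function with mean type zero can certainly have nonzero negative spectrum if one does not also rule out exponential growth of the modulus along vertical lines. Your final matching step is sketchy but honestly flagged as such; the $\la<0$ step is the one where the missing idea is load-bearing.
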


Note that condition (III$^*$) is equivalent to $\E f(\la)$ being locally summable only, no growth required.

\begin{remark}\label{rem:exptodeg2}
We claim that if $(\mu,a)$ is an $\fs$-pair such such that $\mu\geq 0$ and $\a$ has exponential growth then $\deg(\mu)\leq 2$. Indeed, let $\p\in C^\infty_c(\r)$ be even, $0\leq \p \leq 1$, $\int \p =\p(0)=1$ and $\supp(\p) \subset (-1,1)$. Let $\p_\ep(x)=\p(x/\ep)/\ep$ and $\psi_\ep(x)=\ft \p(x/\ep)/\ep$ for $0<\ep <1$. Let $g(x)=e^{-2 \pi C |x|}$, with $C>0$ sufficiently large, and note $\ft g(\xi) = \frac{C}{\pi(C^2+\xi^2)}$. We have
$$
\int_\r (\ft g \ft \p_\ep)*\psi_\ep(t-t_0)\d \mu(t) = \sum_{\la \in \r} a(\la)g * \p_\ep(\la)\ft \psi_\ep(\la)e^{-2\pi i \la t_0 },
$$
for any $t_0\in \r$. It is not hard to show that $|g * \p_\ep(\la)\ft \psi_\ep(\la)| \leq e^{-2 \pi C (|\la|-1)}$. Thus, we conclude the right hand side above is uniformly bounded in absolute valued for $0<\ep<1$. Since $\mu\geq 0$ and $(\ft g \ft \p_\ep)*\psi_\ep \to \ft g$ pointwise, as $\ep\to 0$, we can apply Fatou's Lemma to deduce $\int \frac{\d \mu(t)}{C^2+(t-t_0)^2}<C_0$, for some constant $C_0>0$ independent of $t_0$. In particular we obtain that\footnote{Hence $\mu$ is translation-bounded.}
$$
\sup_{t_0\in \r} \mu([t_0-1,t_0+1])\leq (C^2+1)C_0.
$$
\end{remark}


\section{Applications} We now apply Theorems \ref{thm:maineven} and \ref{cor:1} to classify crystalline pairs. Some of the results and proofs presented here would be better understood after reading Sections \ref{sec:AP}, \ref{sec:apc} and \ref{sec:rep}. We say an entire function $E$ is of \emph{Hermite-Biehler} class if 
\begin{align}\label{def:hermitebiehler}
|E^*(z)| < |E(z)| \quad \text{for all } \  z\in \cp^+,
\end{align}
where $E^*(z)=\ov{E(\ov z)}$. In this case we always write $E=A-iB$, where $A$ and $B$ are real entire functions\footnote{Entire functions which attain only real values on the real line.} defined by $A=(E^*+E)/2$ and $B=(E^*-E)/(2i)$, and we always write $\Theta=E^*/E$. In particular we have the following identity
\begin{align}\label{eq:AbThetaid}
i\frac{A}{B} = \frac{1+\Theta}{1-\Theta}.
\end{align}
Note that both $A$ and $B$ have only real zeros. We denote by $\varphi=\p_E$ a phase function associated to $E$. This is characterized by the condition $e^{i\varphi(x)} E(x) \in\r$ for all real $x$ ($\p$ is unique modulo an integer multiple of $\pi$). For instance, one could take $\p=\frac{1}{2i}\log \Theta$ after branch cutting all zeros and poles of $\Theta$ by vertical semi-lines. It is not hard to show that
$$
\varphi'(x)=\Re i\frac{E'(x)}{E(x)} = \partial_y \log \Theta(x+iy)|_{y=0} > 0,
$$
for all real $x$, and so $\p(x)$ is an increasing function of real $x$. We also have that
$$
e^{2i\p(x)} = \frac{A(x)^2}{|E(x)|^2}  - \frac{B(x)^2}{|E(x)|^2} + 2i\frac{A(x)B(x)}{|E(x)|^2},
$$
for all real $x$. As a consequence, the points $\ga\in\r$ such that $\p(\ga)\equiv 0 \,(\text{mod }  \pi)$ coincide with the real zeros of $B/E$ and the points $s\in\r$ such that $\p(s)\equiv \pi/2\,(\text{mod }  \pi)$ coincide with the real zeros of $A/E$ and, because $\p'>0$, these zeros are simple. In particular, $A/B$ has only simple real zeros and simple real poles, which interlace, and
 $$
\{\ga\in \r : \, \p_j(\ga)\equiv 0 \,({\rm mod} \, \pi)\} = {\rm Zeros}(B_j/A_j) \quad \text{and} \quad  \frac1{\p_j'(\ga)} = \res{\ga}(A_j/B_j)>0.
 $$
Moreover, if $E$ has no real zeros then $A$ and $B$ have only real simple zeros which interlace\footnote{For more on this we recommend \cite{dB} and the introduction of \cite{GL}.}. The next result characterizes $\fs$-pairs $(\mu,a)$ for which $\mu\geq 0$ has locally finite support.

\begin{theorem}\label{thm:HBmupos}
Let $E=A-iB$ be of Hermite-Biehler class such that $A/B \in \apc$ and $\la\in \r \mapsto \E (A/B)(\la)$ is locally summable. Let
\begin{align}
\begin{split}\label{def:amu}
a(\la)& =\ov{a(-\la)}=\E(iA/B)(\la)=\lim_{T\to\infty} \frac{1}{T}\int_{-T + i}^{T+i} i\frac{A(z)}{B(z)} e^{-2\pi i \la z} \d z \quad (\text{for } \la>0), \\
\frac12 a(0)&=\re \E (iA/B)(0) = \Re \lim_{T\to\infty} \frac{1}{T}\int_{-T + i}^{T+i}  i\frac{A(z)}{B(z)} \d z,  \\
\mu & = 2\pi \sum_{\, \p_E(\ga)\equiv 0 \,({\rm mod} \, \pi)} \frac{1}{\p'_E(\ga)} {\del}_{\ga}.
\end{split}
\end{align}
Then $(\mu,a)$ is a real-antipodal $\fs$-pair such that $\mu\geq 0$ and $\mu$ has locally finite support\footnote{A set $S\subset \r$ is locally finite if $\# (S\cap (a,b)) < \infty$ for any $a<b$.}. Conversely, if $(\mu,a)$ is a real-antipodal $\fs$-pair such that $\mu\geq 0$, $\supp(\mu)$ is locally finite and $\a$ has exponential growth, then $(\mu,a)$ has to be built from the construction above.
\end{theorem}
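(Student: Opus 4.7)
The plan is to realize Theorem \ref{thm:HBmupos} as a specialization of Theorems \ref{thm:maineven} and \ref{cor:1} by choosing the auxiliary holomorphic function to be $f(z):= iA(z)/B(z)$. The identity \eqref{eq:AbThetaid} gives $f = (1+\Theta)/(1-\Theta)$, so $f$ maps $\H$ into the right half-plane; equivalently, $G := if = -A/B$ is a Herglotz function (maps $\H$ to $\H$). This is the structural observation that bridges the general theorems and the Hermite-Biehler setting.

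For the direct implication, I would start with $E = A - iB$ as in the hypothesis and verify conditions $({\rm II})$, $({\rm III}^*)$, $({\rm IV})$ for $f = iA/B$. Condition $({\rm II})$ holds at $c_1 = 0$ since $f \in \apc$ whenever $A/B$ does, and $({\rm III}^*)$ is immediate from $\E f = i\,\E(A/B)$ being locally summable. For $({\rm IV})$, the nonnegativity $\Re f \geq 0$ makes $\Re f$ its own nonnegative harmonic majorant of $\log|\exp f|$, giving bounded type; inspecting the Nevanlinna-Herglotz representation of $G$, the assumption $A/B \in \apc$ forces the linear coefficient to vanish, hence $\lim_{y\to\infty} \Re f(iy)/y = 0$. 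Theorem \ref{cor:1} then delivers a real-antipodal $\fs$-pair $(\mu_0, a_0)$, and \eqref{eq:adef} matches $a_0$ with the prescribed $a$ in \eqref{def:amu} at once. Substituting the Herglotz series $G(z) = \beta + \sum_\gamma (1/\p'(\gamma))[(\gamma - z)^{-1} - \gamma/(1+\gamma^2)]$, whose residues come from $\res{\gamma}(A/B) = 1/\p'(\gamma)$, into the reproducing-kernel identity \eqref{eq:repkerLphiid} then yields $\mu_0 = 2\pi\sum_\gamma (1/\p'(\gamma))\del_\gamma = \mu$ by uniqueness.

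For the converse, I would begin with $(\mu, a)$ as given, upgrade the exponential growth of $a$ to $\deg \mu \leq 2$ via Remark \ref{rem:exptodeg2}, and apply Theorem \ref{thm:maineven} to obtain $f: \H \to \cp$ satisfying $({\rm I})$--$({\rm IV})$. Setting $w = z$ in \eqref{eq:repkerLphiid} gives $\Re f(z)/\Im z = (2\pi)^{-1}\int |z-t|^{-2}\d\mu(t) \geq 0$, so $G := if$ is Herglotz; uniqueness of the Poisson representation together with the vanishing of the linear term from $({\rm IV})$ then forces its Nevanlinna measure to be $\nu := \mu/(2\pi)$, which is atomic on the locally finite set $\supp \mu$. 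Enumerating $\supp \mu = \{\gamma_n\}$, I would construct a real entire function $B$ with simple zeros precisely at $\{\gamma_n\}$ via Hadamard factorization (convergence being ensured by $\int \d\mu/(1+t^2)<\infty$) and set $A := -GB$. Then $A$ is entire because $B$'s zeros cancel $G$'s simple poles, real entire because $G(\ov z) = \ov{G(z)}$, and it satisfies $iA/B = f$ by construction. The computation $|E|^2 - |E^*|^2 = -4\,\Im(A\ov B) = 4|B|^2\,\Im G > 0$ on $\H$ certifies that $E := A - iB$ is Hermite-Biehler, while $\res{\gamma_n}(A/B) = \nu(\{\gamma_n\}) = \mu(\{\gamma_n\})/(2\pi)$ reproduces the prescribed weights $1/\p'(\gamma_n)$, and $\E(A/B) = -i\,\E f$ is locally summable by construction.

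The most delicate step will be to certify that the constructed $A/B$ lies in $\apc$ on the \emph{full} upper half-plane, rather than only on the strip $\{\Im z > c_1\}$ supplied by $({\rm II})$. Since $A/B$ is holomorphic on all of $\H$ (its only poles are real) and agrees with an almost periodic function above $c_1$, the plan is to propagate almost periodicity downward using the explicit atomic Herglotz series for $G$ together with the locally finite nature of $\supp \mu$, extracting relatively dense sets of $\ep$-translations valid on strips arbitrarily close to $\r$. A secondary but lighter point is to arrange the Hadamard product for $B$ so that $A = -GB$ inherits the correct Hermite-Biehler growth, which should follow routinely from the bound $\int \d\mu(t)/(1+t^2) < \infty$.
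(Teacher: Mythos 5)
Your overall strategy is the same as the paper's: set $f=iA/B$, check conditions (II), (III$^*$), (IV) of Theorem \ref{cor:1} for the direct implication, and apply the general machinery plus Herglotz/Poisson representation. However, there are two genuine gaps, and both have the same fix that you did not find.

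In the direct direction, you claim that ``the assumption $A/B\in\apc$ forces the linear coefficient to vanish'' in the Nevanlinna--Herglotz representation of $G=-A/B$. This does not follow from almost periodicity alone: membership in $\apc$ only gives boundedness on \emph{horizontal strips} $\{\ep<\Im z<1/\ep\}$, which is perfectly compatible with a nonzero linear term $bz$ (e.g.\ $G(z)=z$ is bounded on every horizontal strip). What you actually need is boundedness on the entire region $\{\Im z>\ep\}$ so that $\Re f(iy)/y\to 0$. The paper obtains this by passing to $\Theta=E^*/E=(1-f)/(1+f)$: since $\Re f>0$ gives $|\Theta|<1$ throughout $\H$ and $1+f$ is zero-free, Lemma \ref{lem:fdisc}(iv) yields $\Theta\in\apc$, and then Lemma \ref{lem:fdisc}(v) gives the crucial uniform bound $|\Theta|<1-c_\ep$ for all $\Im z>\ep$, hence $|f|=|(1+\Theta)/(1-\Theta)|\leq 2/c_\ep$ on the full region $\Im z>\ep$. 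Your route via the Herglotz representation skips this.

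In the converse direction you explicitly acknowledge but do not resolve the key difficulty: promoting $A/B\in\apc$ from the strip $\{\Im z>c_1\}$ supplied by Theorem \ref{thm:maineven} to all of $\H$. Your proposed remedy (``propagate almost periodicity downward using the explicit atomic Herglotz series'') is vague and would face exactly the same obstruction; $A/B$ itself is unbounded near the real axis due to the real poles. The paper's solution is again to switch to $\Theta$: since $|\Theta|<1$ everywhere in $\H$, it is bounded on strips, so once $\Theta(\cdot+ic)\in\apc$ is established, Lemma \ref{lem:altdefapc} (criterion (2)$\Rightarrow$(3)) upgrades this to $\Theta\in\apc$ on all of $\H$, and one returns to $A/B=i(1+\Theta)/(1-\Theta)$ via Lemma \ref{lem:fdisc}. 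Also note the paper does not rebuild $E$ from scratch via Hadamard factorization as you propose; it quotes Theorem \ref{thm:HBmudisc}, which already produced $E$ with $A(\cdot+ic)/B(\cdot+ic)\in\apc$, so your construction of $B$ and $A=-GB$ is a redundant re-derivation of part of that theorem. Your inclusion of Remark \ref{rem:exptodeg2} to supply $\deg(\mu)\leq 2$ is correct and appropriate.
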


\begin{remark}\label{rem:ABTheta}
Observe that by Lemma \ref{lem:fdisc}(iii),(iv),(v) and identity \eqref{eq:AbThetaid}, $A/B\in \apc$ if and only if $\Theta=E^*/E\in \apc$ and $A/B$ has locally finite spectrum if and only if $\Theta$ has. Note also that if $E,E^*\in \apc$ then $\Theta\in \apc$. In particular, if $E$ is a {trigonometric polynomia}l of Hermite-Biehler class, then $iA/B$ belongs to $\apc$ and has locally finite spectrum, hence Theorem \ref{thm:HBmupos} applies and the pair $(\mu,a)$ is crystalline.
\end{remark}

\begin{remark}[The  Kurasov \& Sarnak construction] \label{rem:KS}
Theorem \ref{thm:HBmupos} generalizes a construction of Kurasov \& Sarnak \cite{KS}. They construct a crystalline $\fs$-pair $(\mu,a)$ by letting
$$
\mu = \sum_{Q(\ga)=0} m(\ga)\del_\ga,
$$
where $Q$ is any given trigonometric polynomial with only real zeros $\ga$ and multiplicities $m(\ga)$. Theorem \ref{thm:HBmupos} majorates this construction. Indeed, since $Q$ has finite exponential type, Hadamard's factorization implies that 
$$
Q(z)=z^n e^{c_1z+c_2}\prod_{\ga\neq 0} (1-z/\ga)e^{z/\ga}
$$
for some $c_1,c_2\in\cp$ and $n\geq 0$, where the $\ga's$ are the real zeros of $Q$. In particular $e^{-c_2-iz\Im c_1 }Q$ is a real entire trigonometric polynomial, so we can assume that $Q$ is real on the real line. If that is the case then $c_1\in \r$ and
$$
\Re i\frac{Q'(z)}{Q(z)} =\sum_{\ga} \frac{y}{(x-\ga)^2+y^2}>0.
$$
We obtain that $E=Q'-iQ$ is a trigonometric polynomial of Hermite-Biehler class by a routine calculation. By the previous remark, Theorem \ref{thm:HBmupos} applies. It is also easy to see that the zeros of $Q$ coincide with the points $\ga\in\r$ such that $\p_E(\ga)\equiv 0 \,({\rm mod} \, \pi)$ and $\p'_E(\ga)^{-1}= \res{\ga}(A/B)=m(\ga)$. Thus, by Theorem \ref{thm:HBmupos} (with $\a$ as in \eqref{def:amu}), both $\mu$ and $\a$ have locally finite support, and so $(\mu,a)$  is crystalline. A simple example is given by the Hermite-Biehler trigonometric polynomial
$$
E(z)=\tfrac{1}2({(\pi +1)}e^{-\pi i z}+ e^{iz} + {(\pi -1)}e^{\pi i z}) = \pi \cos(\pi z) + \tfrac12 \cos(z) -i(\sin(\pi z)+\tfrac12\sin(z)),
$$
which produces the crystalline measure $\mu=\sum_{\sin(\pi \ga)+\tfrac12\sin(\ga)=0} \del_\ga$.
\end{remark}

\begin{remark}[The Ulanovskii \& Oleveskii result] \label{rem:OU}
We now explain how one can prove the main result of Ulanovskii \& Oleveskii \cite{OU} using Theorem \ref{thm:HBmupos}, but under milder decay conditions. We claim that if $(\mu,a)$ is an $\fs$-pair such $\mu\geq 0$ is $\n$-valued, $\a$ has locally finite support and exponential growth then the Hermite-Biehler function $E=A-iB$ given by Theorem \ref{thm:HBmupos} is a trigonometric polynomial, and so the construction from Remark \ref{rem:KS} applies. Indeed, since $\deg(\mu)\leq 2$ one can take $\B$ to have the same zeros of $B$, but with multiplicities given by the weights of $\mu$. We can take such $\B$ with $\text{order}(\B)\leq 1$ by \cite[Thm. 6, p. 16]{L}. Using \eqref{eq:repkerLphiid} we conclude that $\re i\B'/\B=\re A/B$ in $\H$, and so Poisson representation shows that $A/B = \B'/\B + h$ for some $h\in \r$, hence we can assume that $A=B'$. Since $\a$ has locally finite support, then $B'/B$ has locally finite spectrum contained in $[0,\infty)$. Lemma \ref{lem:expprim} shows that $B\in \apc$ and it has locally finite spectrum bounded from below. The class $\apc$ is closed under differentiation, and so $B'\in \apc$, and therefore, $E\in \apc$, both with locally finite spectrum bounded from below. Note that $B'$ also has order $\leq 1$, and so $E$ has order $\leq 1$. We can then apply Lemma \ref{lem:order1apc} to conclude that $E$ is a trigonometric polynomial.
\end{remark}

\begin{proof}[Proof of Theorem \ref{thm:HBmupos}]
Let $f=iA/B$. Since $E=A-iB$ is Hermite-Biehler, we obtain that $\re f >0$ in $\H$, and so $|e^{-f}|<1$ in $\H$, hence $e^{f}$ is of bounded type. Moreover, $\E f(\la)$ is locally summable by hypothesis. We conclude that Theorem \ref{cor:1} applies. Poisson representation \cite[Thm. 4 \& Prob. 89]{dB} for $f$ implies that $\mu$ has the form given by \eqref{def:amu}. Conversely, if $(\mu,a)$ is a real-antipodal $\fs$-pair such that $\mu\geq 0$, $\supp(\mu)$ is locally finite and $\a$ has exponential growth, then we can apply Theorem \ref{thm:HBmudisc} to obtain that \eqref{def:amu} holds for some Hermite-Biehler function $E=A-iB$ such that $A(z+ic)/B(z+ic)$ belongs to $\apc$ for some $c>0$. However, Lemma \ref{lem:fdisc} also shows that $\Theta(z+ic)=E^*(z+ic)/E(z+ic)$ belongs to $\apc$, and since $|\Theta|<1$ in $\H$, Lemma \ref{lem:altdefapc} shows that $\Theta\in \apc$ and, by Lemma \ref{lem:fdisc} again we obtain that $A/B\in \apc$.  Since $a(\la)$ is locally summable and $a(\la)=\E f(\la)$ for $\la>0$, then $\E f(\la)$ is locally summable.
\end{proof}

The next result classifies $\fs$-pairs $(\mu,a)$ for which $\mu$ has locally finite support.

\begin{theorem}\label{thm:HBmudisc}
Let $(\mu,a)$ be an $\fs$-pair such that $\mu$ has locally finite support, $\deg(\mu)\leq 2$ and $\a$ has exponential growth. Then there exists four Hermite-Biehler functions $E_j=A_j-iB_j$ and numbers $p_j\in \{0,1\}$, for $j=1,2,3,4$, such that if we let $f=ip_1A_1/B_1-ip_2A_2/B_2$ and $g=ip_3A_3/B_3-ip_4A_4/B_4$ then $f(\cdot+ic)$ and $g(\cdot+ic)$ belong to $\apc$ for some $c>0$, $\E f(\la)$ and $\E g(\la)$ are locally summable, and:
\begin{align}
\begin{split}\label{def:genmua}
        \frac1{2\pi}{\mu} & = p_1\sum_{\, \p_1(\ga)\equiv 0 \,({\rm mod} \, \pi)} \frac{1}{\p_1'(\ga)} {\del}_{\ga}-p_2\sum_{\, \p_2(\ga)\equiv 0 \,({\rm mod} \, \pi)} \frac{1}{\p_2'(\ga)} {\del}_{\ga} \\
 & \ \  -  ip_3\sum_{\, \p_3(\ga)\equiv 0 \,({\rm mod} \, \pi)} \frac{1}{\p_3'(\ga)} {\del}_{\ga}+ip_4\sum_{\, \p_4(\ga)\equiv 0 \,({\rm mod} \, \pi)} \frac{1}{\p_4'(\ga)} {\del}_{\ga}; \\
a(\la) & =\E f(\la)-i\E g(\la) \quad \text{and} \quad a(-\la)  =\ov{\E f(\la)+i\E g(\la)} \quad (\text{for } \la>0);\\
\tfrac1{2}a(0)  & =\re \E f(0)-i\re\E g(0),
\end{split}
\end{align}
where $\p_j$ is the phase function associated with $E_j$. Moreover: (i) $\mu\geq 0$ if and only if $p_1=1$ and $p_2=p_3=p_4=0$; (ii) $\a$ has locally finite support if and only if $f$ and $g$ have locally finite spectrum. Furthermore, if $\mu$ is $\n$-valued then we can take $A_1=B_1'$.
\end{theorem}

\begin{proof}[Proof of Theorem \ref{thm:HBmudisc}]
By real-antipodal splitting we can assume that $\mu$ is real-valued (so $p_3=p_4=0$). We can then apply Theorem \ref{thm:maineven} and obtain that
$$
f(z) = \frac{a(0)}{2} + \lim_{T\to\infty} \sum_{0<\la < T} (1-\la/T)a(\la)e^{2\pi i \la z} = ih + \frac{1}{2\pi i} \int_\r\frac{1+tz}{t-z}\frac{\d\mu(t)}{1+t^2},
$$
for some $h\in \r$, where $f$ is holomorphic in $\H$ and $f(\cdot + ic)\in \apc$ for some $c>0$. 
Let $\mu=\sum_{\ga \in \Lambda} r(\ga) \del_\ga$ where $\Lambda \subset \r$ is locally finite and $r(\ga)=r_1(\ga)-r_2(\ga)$, with $r_j\geq 0$. Let 
$$
f_j(z) =  \frac{1}{2\pi i}\int_\r \frac{1+tz}{t-z}\frac{\d \mu_j(t)}{1+t^2}
$$
for $j=1,2$, where $\mu_j=\sum_{\ga \in \Lambda} r_j(\ga) \del_\ga$. Observe that each $f_j$ is meromorphic in $\cp$ with only simple poles (possibly) at $\Lambda$. Since $\mu_j\geq 0$ we have $if_j(\H) \subset \H\cup\{0\}$, and we can then apply a classical result \cite[p.  308, Thm. 1]{L} to deduce this happens exactly when there are two real entire functions $A_j$ and $B_j$ with only real zeros that interlace and such that $if_j=-p_jA_j/B_j$, with $p_j\in \{0,1\}$ to account for the case when $\mu_j=0$ for some $j=0,1$. Since $\re iA_j/B_j>0$, we deduce that $E_j=A_j-iB_j$ is of Hermite-Biehler class, and so, Poisson representation shows that
$$
\mu = 2\pi p_1\sum_{\, \p_1(\ga)\equiv 0 \,({\rm mod} \, \pi)} \frac{1}{\p_1'(\ga)} {\del}_{\ga}-2\pi p_2\sum_{\, \p_2(\ga)\equiv 0 \,({\rm mod} \, \pi)} \frac{1}{\p_2'(\ga)} {\del}_{\ga}.
$$
Noting that $f=f_1-f_2+ih$, we obtain $f_1(\cdot+ic)-f_2(\cdot+ic)\in \apc$, and since $\a$ has exponential growth we conclude that $a(\la)=\E (f_1-f_2)(\la)$ for $\la>0$ and $\frac12 a(0)=\re \E (f_1-f_2)(0)$. If $\mu\geq 0$ then $(p_1,p_2)=(1,0)$. Also, $\a$ has locally finite support if and only if $f_1-f_2$ has locally finite spectrum. Finally, if $\mu$ is $\n$ valued then, since $\deg(\mu)\leq 2$, one might take $\B$ to have zeros exactly at support of $\mu$ with multiplicity determined by the weights of $\mu$ and order at most $1$. In particular, Hadamard factorization and log-differentiation show that $\re i\B'/\B=\re i A_1/B_1$ and so $f_1=ih + i\B'/\B$, and $\mathcal{E}=\B'-i\B$ is now Hermite-Biehler.
\end{proof}

\begin{remark}\label{rem:genamu}
We note that it follows from Theorems \ref{thm:maineven} and \ref{cor:1} that Theorem \ref{thm:HBmudisc} is sharp. That is, if we are given four Hermite-Biehler functions $E_j=A_j-iB_j$ and numbers $p_j\in \{0,1\}$, for $j=1,2,3,4$, such that $f=ip_1A_1/B_1-ip_2A_2/B_2$ and $g=ip_3A_3/B_3-ip_4A_4/B_4$ belong to $\text{AP}(\cp+ic)$ for some $c>0$, then the pair $(\mu,a)$ defined by \eqref{def:genmua} is an $\fs$-pair. A crucial instantiation of Theorem \ref{thm:HBmudisc} is the difference of two prime summation formula of Guinand, explained of Section \ref{sec:ACM}. It can be obtained from our results in the following way. Let $\chi_1$ and $\chi_2$ be two primitive and even Dirichlet characters of distinct modulus $N_1$ and $N_2$, and let 
$$
L(\chi_j,s) = \sum_{n\geq 1} \chi_j(n)n^{-s}
$$
be their associated L-functions for $j=1,2$ and $\re s>1$. One can attach to each of them the function
$$
B_j(z)=\ft \chi_j(1)^{-1/2}(N_j/\pi)^{-iz/2}\Gamma(1/4-iz/2)L(\chi_j,1/2-iz)
$$
for complex $z$, where $\ft \chi_j(1)=N_j^{-1/2}\sum_{n=1}^N \chi(j)e^{2\pi i n/N} \in \{\pm 1, \pm 1\}$. Thanks to their functional equations, it can be shown that $B_j$ is real on the real line, this is, $\ov{B_j(\ov z)}=B_j(z)$, and that each $B_j$ is an entire function of order $\leq 1$. Assuming the Riemann Hypothesis for both $L$-functions, each $B_j$ have only real zeros and those coincide (with multiplicity) with the ordinates of the nontrivial zeros of each $L$-function. We now let $E_j=A_j-iB_j$, with $A_j=B'_j$. Hadamard's factorization shows that $ \re iA_j/B_j \geq 0$ in $\cp^+$ and so each $E_j$ is a Hermite-Biehler function. Now observe that
\begin{align*}
i\frac{A_1}{B_1} - i\frac{A_2}{B_2} & = \frac12\log(N_1/N_2) + \frac{L'(\chi_1,1/2-iz)}{L(\chi_1,1/2-iz)} - \frac{L'(\chi_2,1/2-iz)}{L(\chi_2,1/2-iz)}  \\ & = \frac12\log(N_1/N_2) + \sum_{n \geq 2} \frac{\Lambda(n)(\chi_1(n)-\chi_2(n))}{\sqrt{n}}e^{2\pi i \tfrac{\log(n)}{2\pi} z}
\end{align*}
when $\im z>1/2$, where $\Lambda(n)$ is the von Mongoldt function.
It follows that $f=i\frac{A_1}{B_1} - i\frac{A_2}{B_2}$ is almost periodic in $\cp^++i/2$. The previous results show that the pair
\begin{align*}
\mu & = 2\pi\bigg(\sum_{L(\chi_1,1/2+i\ga_1)=0} \del_{\ga_1}- \sum_{L(\chi_2,1/2+i\ga_2)=0} \del_{\ga_2}\bigg)
\\
a &= \log(N_1/N_2)\1_0 + \sum_{n\geq 2} \frac{\Lambda(n)}{\sqrt{n}}\bigg[(\chi_1(n)-\chi_2(n))\1_{\tfrac{\log(n)}{2\pi}} + (\ov{\chi_1(n)}-\ov{\chi_2(n)})\1_{-\tfrac{\log(n)}{2\pi}}\bigg]
\end{align*}
is a $\fs$-pair (above, $\1_c(\la)=1$ if $\la=c$ and $\1_c(\la)=0$ otherwise).
\end{remark}

In what follows we say that an entire function $E$ is of \emph{P\'olya class} if $E$ has no zeros in $\H$, $|E^*/E|\leq 1$ in $\H$ and $y\mapsto |E(x+iy)|$ is nondecreasing for $y>0$. A classical result \cite[Thm. 7]{dB} states that a function $E$ is of P\'olya class if and only if
\begin{align}\label{eq:poylafact}
E(z) = E^{(r)}(0)(z^r/r!)e^{-dz^2-ibz} \prod_{n} (1-z/\ov{z_n})e^{z\re 1/z_n},
\end{align}
where $d\geq 0$, $\re b\geq 0$, $\{\ov z_n=x_n-iy_n\}$ are the nonzero zeros of $E$ (with $y_n\geq 0$) and
$$
\sum_{n} \frac{1+y_n}{x_n^2+y_n^2}<\infty.
$$ 
The P\'olya class can be also defined as functions that are uniform limits in compact sets of polynomials with no zeros in $\H$ (see \cite[Problem 12]{dB}). Moreover, any function of P\'olya class has order at most $2$ (\cite[Problem 10]{dB}).

The next result is a brand new classification theorem\footnote{In a way, this is the most technically hard result of the paper.} for nonnegative crystalline measures. It generalizes a result of Olevekii \& Ulanovskii \cite{OU} in the uniformly discrete case, dropping the $\n$-valuedness assumption.

\begin{theorem}\label{thm:5}
Let $(\mu,a)$ be an $\fs$-pair. Assume that:
\begin{enumerate}
\item $\mu$ has uniformly discrete support and there is $\delta>0$ such that $\mu(\{x\})\geq \delta$ for all $x\in \supp(\mu)$;
\item There is $b>0$ such that $\supp(a)\cap (0,b)=\emptyset$ and $\a$ has exponential growth.
\end{enumerate}
Then there exists an entire function $E=A-iB$ of Hermite-Biehler class and of finite exponential type such that $E,E^*\in \apc$ (and so $A/B\in \apc$), $\spec(E)$ is bounded and \eqref{def:amu} holds. If in addition $\supp(a)$ is locally finite, then $E$ is a trigonometric polynomial and Remark \ref{rem:KS} applies.
\end{theorem}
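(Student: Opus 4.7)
Condition (1) forces $\mu\geq 0$. The Ingham-type decay \eqref{ineq:adecay} is strictly stronger than $a(\cdot)$ having exponential growth, so Remark \ref{rem:ingham} applies and gives $\mu([x,x+1])\leq C$ uniformly in $x\in\r$; in particular $\deg(\mu)\leq 1$ and the weights $r(\ga):=\mu(\{\ga\})$ satisfy $\delta\leq r(\ga)\leq C$. Since $\supp(\mu)$ is uniformly discrete, the converse direction of Theorem \ref{thm:HBmupos} applies and produces a Hermite-Biehler function $E_0=A_0-iB_0$ with $A_0/B_0\in\apc$, $\E(A_0/B_0)$ locally summable, $B_0$ having simple real zeros exactly at $\Lambda:=\supp(\mu)$, and $\p_{E_0}'(\ga)=2\pi/r(\ga)\in[2\pi/C,\,2\pi/\delta]$. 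The hypothesis $\supp(a)\cap(0,b)=\emptyset$ then reads $\spec(iA_0/B_0)\subset\{0\}\cup[b,\infty)$.

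The key step is to replace $E_0$ by a canonical representative $E=A-iB$ of \emph{finite exponential type}. Since $\Lambda$ has density at most $C/\delta$, the genus-$1$ canonical Hadamard product $B$ with simple real zeros at $\Lambda$ is entire real of order $\leq 1$ and finite exponential type. One then solves the real interpolation problem
\[
A(\ga)=\frac{r(\ga)}{2\pi}B'(\ga),\qquad \ga\in\Lambda,
\]
for a real entire $A$ of finite exponential type; solvability follows from $\Lambda$ being a complete interpolating sequence for a suitable Paley-Wiener / de Branges space associated with $B$ (uniform discreteness plus linear density), together with the two-sided bound $r(\ga)\in[\delta,C]$. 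By construction, $A/B$ has the same simple poles at $\Lambda$ with the same residues $r(\ga)/(2\pi)$ as $A_0/B_0$; since $\re(iA/B)$ and $\re(iA_0/B_0)$ are both Poisson integrals of the measure $\mu/(2\pi)$ on $\H$ (via \eqref{eq:repkerLphiid} and $\deg(\mu)\leq 1$), Herglotz representation forces $A/B=A_0/B_0+h$ for a real constant $h$, and absorbing $h$ into $A$ (by replacing $A$ with $A-hB$) makes them equal. Thus $E=A-iB$ is Hermite-Biehler, yields the same pair $(\mu,a)$ via \eqref{def:amu}, and is entire of finite exponential type.

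Since $A/B\in\apc$ is bounded on each shifted half-plane $\H+ic$, $E$ is entire of finite exponential type, and $|E^*/E|<1$ in $\H$, a Phragmén-Lindelöf argument on horizontal strips (propagating the boundedness of the ratio $A/B$ to $E$ and $E^*$ individually, using the HB inequality and the control of $|E|^2=A^2+B^2$ on $\r$) shows that $E,E^*\in\apc$. Because $E$ has finite exponential type, $\spec(E)$ and $\spec(E^*)$ are contained in a bounded interval, so $\spec(E)$ is bounded. For the final sentence: if $\supp(a)$ is locally finite, then $\spec(iA/B)\subset\{0\}\cup\supp(a)$ is locally finite, hence so is $\spec(E)$; being also bounded, it is finite, and Lemma \ref{lem:order1apc} (applied to $E\in\apc$ of order $\leq 1$ with locally finite bounded-below spectrum) concludes that $E$ is a trigonometric polynomial, whence $(\mu,a)\in\rrtp$ by Remark \ref{rem:KS}.

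The main obstacle is the interpolation construction in the second paragraph: unlike Remark \ref{rem:OU}'s $\n$-valued case --- where $B$ absorbs the weights as multiplicities of zeros and one directly writes $A/B=B'/B+h$ --- real weights force a decoupled construction of $B$ (a simple-zero canonical product over $\supp(\mu)$) and $A$ (a separate interpolation problem). The uniform lower bound $\mu(\{x\})\geq\delta$ is precisely what makes this interpolation well-conditioned. A secondary technical hurdle is upgrading $A/B\in\apc$ to $E,E^*\in\apc$ individually, which requires the Phragmén-Lindelöf propagation from the ratio to the numerator and denominator under the HB constraint.
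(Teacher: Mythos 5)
There are two genuine gaps, and they sit at exactly the two places you flag as the ``main obstacle'' and the ``secondary hurdle.''

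First, the interpolation step does not work as stated. You build $B$ as a canonical product over $\Lambda=\supp(\mu)$ and then want a real entire $A$ of finite exponential type with $A(\ga)=\tfrac{r(\ga)}{2\pi}B'(\ga)$, justified by ``$\Lambda$ being a complete interpolating sequence.'' Uniform discreteness together with a uniform upper bound on the counting measure is \emph{not} sufficient for $\Lambda$ to be a complete interpolating sequence for any Paley--Wiener or de~Branges space, and without that no solvability follows. Worse, even granting a solution $A$, the next sentence is circular: you assert that $\re(iA/B)$ is the Poisson integral of $\mu/(2\pi)$ ``via \eqref{eq:repkerLphiid},'' but \eqref{eq:repkerLphiid} is precisely the property one needs to establish for $A/B$; it does not follow from matching residues at $\Lambda$, because two meromorphic functions with identical principal parts can differ by any entire function, and nothing in your construction forces $iA/B$ to be Herglotz. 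The paper avoids this entirely: it does not solve an interpolation problem, but instead takes $\A$ and $\B$ to be canonical products over the \emph{actual} zeros of $A_0$ and $B_0$ (using $\sum 1/(1+\ga^2)<\infty$ for both, from interlacing and the Ingham bound), observes $\re i\A/\B\geq 0$ by interlacing, and shows $F=(A_0/B_0)/(\A/\B)$ is entire, equal to $F^*$, of bounded type, hence by Krein's theorem $F=e^{hz}$ and bounded type forces $h=0$. This gives $A_0/B_0=\A/\B$ with $\widetilde E=\A-i\B$ of order $\le 1$, with no interpolation.

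Second, the passage from $A/B\in\apc$ with $E$ of finite exponential type to $E,E^*\in\apc$ is not a ``Phragm\'en--Lindel\"of argument on horizontal strips'' that you can wave at; it is the technical heart of the theorem and occupies Steps~2--6 of the paper's proof. The spectral gap $\supp(a)\cap(0,b)=\emptyset$ (which your proposal acknowledges but never actually uses) is needed to prove $\sup_n y_n<\infty$ and P\'olya class membership; the uniform discreteness of $\supp(\mu)$ is used --- in a lengthy pointwise estimate --- to prove $\sup_x\p'(x)<\infty$, which in turn gives separation of the zeros of $E$ from the real axis; only then does one get $\p'\in\apr$ with $\spec(\p')\subset[b,\infty)$, after which a Poisson-representation argument for $iE'/E$ combined with Lemmas~\ref{lem:aprpoisson}, \ref{lem:expprim}, and \ref{lem:order1apc} finally yields $E,E^*\in\apc$ with bounded spectrum. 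The paper itself remarks, just after the proof, that it is exactly the boundedness of $\p'$ (Step~3, via uniform discreteness) that is the bottleneck. Your proposal skips all of this, so even if the first gap were repaired, the conclusion would not follow.
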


\begin{proof}
\underline{Step 1}. We first apply Theorem \ref{thm:HBmupos} to conclude there is an Hermite-Biehler function $E=A-iB$ such that $A/B\in \apc$ and \eqref{def:amu} holds.  We can assume that $E$ has no real zeros as these can be removed (with a Weierstrass product) without altering $A/B$ or the fact that $E$ is Hermite-Biehler. In particular, $\supp(\mu)=\{\ga\in \r : \p(\ga)\equiv 0 \,({\rm mod} \, \pi)\}=\text{Zeros}(B)$, where $\p$ is the phase function associated with $E$, and these zeros are all simple. The zeros of $A$ are also simple and interlace  those of $B$. Since $\mu$ is bounded from below on its support and $\a$ has exponential growth,  we can apply Remark \ref{rem:exptodeg2} to obtain $$c<\p'(\ga)<1/c,$$ for some $0<c<1$, whenever $B(\ga)=0$. In particular, we have $\sum_{B(\ga)=0} \frac{1}{1+\ga^{2}}<\infty$ and, since the zeros of $A$ interlace those of $B$, we also have $\sum_{A(s)=0} \frac{1}{1+s^{2}}<\infty$.  We claim that we can assume $E$ is has order at most $1$. Indeed, define the canonical products
$$
\B(z) = b z^p \prod_{B(\ga) = 0,\ \ga\neq 0} (1-z/\ga)e^{z/\ga} \quad \text{and} \quad \A(z) = z^q \prod_{A(s) = 0,\ s\neq 0} (1-z/s)e^{z/s},
$$
for some $p,q\in \{0,1\}$, where $b\in\r$ is chosen so that $(A(z)/B(z))/(\A(z)/\B(z))=1+O(z)$ for $z\to 0$. Hence both $\A$ and $\B$ have order at most $1$ by a classical result of Borel \cite[Thm. 6, p. 16]{L}. Since $\re iA/B \geq 0$ in $\H$ and, a straitfoward computation, shows that $\re i\A/\B\geq 0$ in $\H$, we conclude that both $A/B$ and $\A/\B$ are of bounded type and so $F= (A/B)/(\A/\B)$ also is and has no zeros. Since $F=F^*$, a classical result of Krein \cite{Kr} (see also \cite[Thm. 1, p. 115]{L2}) shows that $F$ is of exponential type, and so $F(z)=e^{hz}$, for some $h\in \r$. Since $F$ is of bounded type we have $h=0$. We can then define $\widetilde{E} =\A-i\B$ to finish the claim.

\underline{Step 2}. Let $\{\ov z_n=x_n-iy_n\}$ be the zeros of $E$ with $y_n> 0$. We claim $\sup_{n} y_n <\infty$ and that $E$ is of P\'oyla class. Indeed, since $f=iA/B\in \apc$, by Remark \ref{rem:ABTheta} we have $\Theta=E^*/E=(1-f)/(1+f) \in \apc$. Since $\spec(f)\subset [0,\infty)$, a simple computation using Lemma \ref{lem:fdisc}(i) shows
$\E \Theta(0)=\frac{1-\E f(0)}{1+\E f(0)}$. However, $2\re \E f(0) = a(0)>0$ (because $\mu\geq 0$), thus $|\E \Theta(0)|<1$. Assume now $\E \Theta(0)\neq 0$. Then Lemma \ref{lem:fdisc}(i) shows straightforwardly that $\Theta(x+iy)$ has no zeros for large $y>0$. Assume otherwise that $\E \Theta(0)=0$, that is, $\E f(0)=1$. Since $a(\la)=\E f(\la)$ for $\la>0$ we deduce that $\spec(f) \subset \{0\}\cup [b,\infty)$ where $b=\inf\{\la>0 : a(\la)\neq 0\}>0$ by assumption. Since  $f$ has no zeros in $\H$, we can then apply Lemma \ref{lem:fdisc}(ii) to conclude that $b\in \spec(f)$. Writing $f=1-2g$, with $\spec(g)\subset [b,\infty)$ and $\E g(b)\neq 0$, we can apply Lemma \ref{lem:fdisc}(i) to conclude that $|g(x+iy)|<1$ for large $y>0$, and so
$$
\Theta(z) = \frac{g(z)}{1-g(z)} =  \sum_{n\geq 1} g(z)^{n},
$$
hence $\spec(\Theta)\subset [b,\infty)$ and $\E \Theta(b)\neq 0$. We can then apply again Lemma \ref{lem:fdisc}(i) to conclude that $\lim_{y\to\infty}\sup_{x}|e^{-2\pi i b (x+iy)}\Theta(x+iy)- \E \Theta(b)|=0$, hence $\Theta(z)$ has no zeros for large $\im z$. Finally, since $E$ has order at most $1$ the following sum is now finite
$$
\sum_{n} \frac{1+y_n}{x_n^2+y_n^2}<\infty.
$$ 
and $E$ has the following factorization
 $$
E(z) = E(0)e^{-ihz} \prod_{n} (1-z/\ov{z_n})e^{z\re 1/z_n}
$$
for some $h\geq 0$. Hence $E$ is of P\'oyla class.

\underline{Step 3}. We claim now that $\sup_{x\in \r} \p'(x)<\infty$. Since \eqref{eq:repkerLphiid} holds true for $f=iA/B$, we obtain that
$$
K(w,z):=\frac{B(z)\ov{A(w)}-\ov{B(w)}{A(z)}}{\pi(z-\w)} = \sum_{B(\ga)=0} \frac{1}{\pi\p'(\ga)} \frac{B(z)\ov{B(w)}}{(\ga-z)(\ga-\w)}
$$
for all $z,w\in\cp$.  A routine computation shows that
$$
\p'(x) = \re i \frac{E'(x)}{E(x)} =  \frac{\pi K(x,x)}{|E(x)|^2} = \sum_{B(\ga)=0} \frac{\sin^2 \p(x)}{\p'(\ga)(\ga-x)^2}.
$$
Enumerate the $\text{Zeros}(B)=\{\ga_n\}$ and let $c>0$ be so small that $\ga_{n+1}-\ga_n\geq c$ for all $n$. Let $0<\ep<c/10$ to be chosen later.  Observe first that if ${\rm dist}(x,\{\ga_n\})=|x-\ga|\geq \ep$ (with $\ga=\ga_{n_0}$) then $|x-\ga_{n_0+k}| \geq \ep+|k|c/2$, and so
$$
\p'(x) \leq 2\sum_{k \geq 0} \frac{1}{c(\ep+kc/2)^2} = O(\ep^{-2}).
$$
On the other hand, if ${\rm dist}(x,\{\ga_n\})=|x-\ga|\leq \ep$, then $|\ga_{n_0+k}-x|\geq \frac{c}2|k|$, for $k\neq 0$, and 
$$
\p'(\ga)\p'(x) \leq  \frac{\sin^2 \p(x)}{(\ga-x)^2} + C\sin^2 \p(x)
 $$
 for some $C>0$, independent of $\ep$ and $\ga$. Using the inequality $x^2\geq \sin^2 x$ for all real $x$, and factoring $\sin^2 \p(x)$, we obtain
 $$
\frac{\p'(\ga)\p'(x)}{\sin^2 \p(x)} \leq \frac{ (\pi/c)^2}{\sin^2[(\ga-x)\pi/c]} + C
 $$
 We conclude that the function
 $$
\xi(x)= \p'(\ga)\cot \p(x) - \frac{\pi}{c}\cot[\pi(x-\ga)/c] + C(x-\ga),
 $$
 which is analytic in a complex neighbourhood of the segment $I=[\ga-\ep,\ga+\ep]$, is nondecreasing for $x\in I$ and $\xi(\ga)=0$. We conclude that 
 $$
 (x-\ga)\p'(\ga)\cot \p(x)  \geq \frac{\pi(x-\ga)}{c}\cot[\pi(x-\ga)/c] - C(x-\ga)^2
 $$
 for $x\in I$. Since the function $ \frac{\pi(x-\ga)}{c}\cot[\pi(x-\ga)/c]$ has value $1$ for $x=\ga$, there is $\ep_0=\ep_0(c,C)>0$, with $0<\ep_0<c/10$, such that the right hand side above is bounded from below by $ \frac{\pi(x-\ga)}{2c}\cot[\pi(x-\ga)/c]$
for $x\in I_0:=(\ga-\ep_0,\ga+\ep_0)$, and so
$$
 (x-\ga)\p'(\ga)\cot \p(x)  \geq \frac{\pi(x-\ga)}{2c}\cot[\pi(x-\ga)/c]
 $$
 for $x\in I_0$. We now select $\ep=\ep_0$. Observing the right hand side above is positive for $x\in I_0$, we can square both sides above, factor out $(x-\ga)^2$, lower bound $\p'(\ga)\geq c$ and isolate $\sin^2 \p $ to obtain
$$
\sin^2 \p(x) \leq \frac{1}{1+C\cot^2[\pi(x-\ga)/c]}
$$
for $C=\pi^2/(4c^4)$. Note also that for $x\in I_0$ we have $|x-\ga_{n_0+k}|\geq |x-\ga_{n_0}-kc|$ for any $k\in \z$, hence
\begin{align*}
\p'(x) \leq \frac{1}{c}\sum_{k\in \z}\frac{\sin^2 \p(x)}{(x-\ga+kc)^2} & =\frac{\pi^2}{c^3} \frac{\sin^2 \p(x)}{\sin^2[\pi(x-\ga)/c]} \\ &\leq  \frac{\pi^2}{c^3}\frac{1}{\sin^2[\pi(x-\ga)/c]+C\cos^2[\pi(x-\ga)/c]}  \leq \frac{\pi^2}{c^3}.
\end{align*}
for $x\in I_0$. This proves the claim.

\underline{Step 4}. We claim the zeros of $E$ are separated from the real axis, that is, there is $c>0$ such that $y_n\geq c$ for all $n$. Indeed, observe that  since $E(z)e^{i\p(z)}=E^*(z)e^{-i\p(z)}$ whenever $\p(z)$ is defined (any neighborhood of $\r$ not containing $\{z_n\}_{n\in\z}\cup \{\ov{z_n}\}_{n\in \z}$) we obtain that
$$
\p'(z) = \partial_z \frac{1}{2i} \log \Theta(z) =  \frac{1}{2i} \frac{\Theta'(z)}{\Theta(z)},
$$
with $\Theta=E^*/E$. Using the factorization of $E$ we obtain
$$
\Theta(z) = e^{2i h z}\prod_{n\in \z} \frac{1-z/z_n}{1-z/\ov{z_n}},
$$
and so
$$
\p'(x) = h +  \sum_{n\in \z} \frac{y_n}{|x-z_n|^2}
$$
for real $x$. Hence $\p'(x_n) \geq 1/y_n$, and since $\p'(x)$ is bounded, the claim follows.

\underline{Step 5}. We claim that $\p'\in \apr$ and $\spec(\p')\subset [b,\infty)$. Let $c>0$ be small enough such that $\Theta$ has no zeros or poles in the strip $S=\{-c<\im z <c\}$.  First we show that $\Theta$ is bounded in every horizontal strip contained  in $S$. We only need to show this for $z\in S$ with $\im z\leq 0$. Indeed, since
$$
\frac12 \partial_y \log |\Theta^*(x+iy)|  =   h + \sum_{n\in\z}\frac{y_n[(x-x_n)^2+y_n^2-y^2]}{|z-z_n|^2|z-\ov{z_n}|^2}
$$
and $y_n^2-y^2\leq 3(y_n-y)^2$ for $0\leq y\leq c$, we deduce that
$$
\frac12 \partial_y \log |\Theta^*(x+iy)| \leq h + 3\sum_{n\in \z} \frac{y_n}{|x-z_n|^2} \leq 3\p'(x) \leq C
$$
for some $C>0$.  Since $\log |\Theta^*(x)|=0$, integration shows that $\Theta^*(x+iy)\leq e^{2Cy}$ for $0\leq y\leq c$. Since $\Theta\in \apc$, we can now apply Lemma \ref{lem:altdefapc} to deduce that $\Theta(\cdot-ic)\in \apc$ and $\Theta'(\cdot-ic)\in \apc$. By Lemma \ref{lem:fdisc}(iv), we get that $|\Theta|$ is bounded away from zero in every horizontal strip contained in $S$. We deduce that $\Theta'/\Theta\in \apr$, and so\footnote{If we let ${\rm AP}(S)$ be defined in the same way as in $\apc$, but considering only horizontal strips strictly contained in $S$, we have showed that $\p'\in {\rm AP}(S)$.} $\p'\in \apr$. Step 1 shows that $\spec(\Theta) \subset \{0\}\cup [b,\infty)$ and $\E \Theta(b)\neq 0$, hence $\spec(\Theta'/\Theta)\subset [b,\infty)$ and $\spec(\p')\subset [b,\infty)$.

\underline{Step 6}. Since $E$ is of P\'olya class we have $\re iE'/E\geq 0$ in $\H$, and Poisson representation guarantees that
$$
\frac{iE'(z)}{E(z)} =  id - ipz + \frac{1}{\pi i} \int_\r \frac{1+tz}{t-z}\frac{\p'(t)\d t}{1+t^2},
$$
for some $d\in \r$ and $p\geq 0$, where  $\deg(\p'(t)\d t) \leq 2$. This fact, in conjunction with the factorization of $E$ shows that
$$
\re \frac{iE'(z)}{E(z)} = py + \frac{y}{\pi} \int_\r \frac{\p'(t)\d t}{(x-t)^2+y^2} = h + \sum_{n}\frac{y+y_n}{(x-x_n)^2+(y+y_n)^2}.
$$
Since $\sum_{n} \frac{1+y_n}{x_n^2+y_n^2}<\infty$, we conclude that $p=\lim_{y\to\infty} \re \frac{iE'(iy)}{yE(iy)}=0$. We can apply Lemma \ref{lem:aprpoisson} so deduce that $iE'/E\in \apc$ and $\spec(iE'/E)\subset \{0\}\cup [b,\infty) $. We can apply Lemma \ref{lem:expprim} to conclude that $E\in \apc$ and that $\spec(E)$ is bounded from below. Since $E^*=\Theta E$, we conclude that $E^*\in \apc$ also and that $\spec(E^*)$ is bounded from below. Finally, we can use Lemma \ref{lem:order1apc} to deduce the spectrum of $E$ (and of $E^*$) is bounded and $E$ is of exponential type. To finish, if in addition $\supp(a)$ is locally finite, then so is $\spec(A/B)$. Since $\a$ has exponential growth, we obtain that $\Theta$ has locally finite spectrum, thus so $\p'$ has locally finite spectrum. The previous applications of Lemmas \ref{lem:aprpoisson} and \ref{lem:expprim} (and their content) show that $E$ has locally finite spectrum as well, and so Lemma \ref{lem:order1apc} shows that $E$ is a trigonometric polynomial.
\end{proof}

We believe that Theorem \ref{thm:5} should still hold only assuming that $\supp(\mu)$ is locally finite. Note that by Remark \ref{rem:exptodeg2}, $\supp(\mu)$ is contained in a finite union of uniformly discrete sets. Also, the uniformly discreteness of $\supp(\mu)$ plays a role only in Step 3 above, where we show $\p'$ is bounded as a stepping stone to show the zeros of $E$ cannot get close to the real axis. Note that if $\p'$ is bounded then $\supp(\mu)$ is uniformly discrete. However, we still think there must be a way to circumvent this issue, and nevertheless conclude that the zeros of $E$ are separated from the real axis. Unfortunately we were not able to come up with such maneuver, despite many efforts.

\begin{remark}
We now construct one example of an $\fs$-pair $(\mu,a)$ satisfying the conclusion of Theorem \ref{thm:5} for which $E$ is not a trigonometric polynomial. We let $E=B'-iB$ where
$$
B(z) = \sum_{n=0}^\infty 10^{-n} \sin(\pi (1-2\theta_n) z),
$$
where $\theta_0=0$ and $\{\theta_n\}_{n\geq 1}\subset (0,1/2)$ is a sequence of irrational numbers such that $\{1,\theta_1,...,\theta_k\}$ is independent over $\z$ for each $k\geq 0$. Assume also that $\theta_1=\inf_{n\geq 1} \theta_n$. Note that $B$ is well-defined entire function and almost periodic in $\cp$ since its series converges uniformly in any horizontal strip. Also note that $B$ is of exponential type at most $\pi$. In order to show that $E$ is Hermite-Biehler we only need to verify that $B$ has only real zeros. Consider the box $Q_M=[-M-1/2,M+1/2] + i[-M-1/2,M+1/2]$, for $M\geq 0$. Then function $\sin(\pi z)$ has exactly $2M+1$ simple real zeros in this box. Since, 
$$
|\sin(z)|^2 = \cosh(y)^2 \sin(x)^2 + \sinh(y)^2 \cos(x)^2 \leq e^{2|y|},
$$
it is not hard to show that 
$|\sin(\pi z)| >\bigg| \sum_{n=2}^\infty 10^{-n} \sin(\pi \si_n z)\bigg|$
on the boundary of $Q_M$. Rouche's Theorem implies that $B(z)$ has the same number of zeros in $Q_M$ as $\sin(\pi z)$ for large $M$, which equals $2M+1$. However, $|B(n+1/2)-(-1)^n|\leq \frac19$ for all $n\in \z$, and so $B$ has only real simple zeros, with exactly one zero in each interval $(n-1/2,n+1/2)$ for all $n$. Also note that $$\spec(iB'/B)=\{k_1\theta_1 + ...+k_l\theta_l +m_1(1-\theta_1)+...+m_l(1-\theta_l) : k_j,m_j\geq 0\},$$ and so $\spec(iB'/B) \cap (0,\theta_1)=\emptyset$.
Moreover, by construction, each $\la\in \spec(iB'/B)$ has a uniquely associated pair of vectors $(k_1,....,k_l)$ and $(m_1,...,m_l)$ such that $\la=k_1\theta_1 + ...+k_l\theta_l +m_1(1-\theta_1)+...+m_l(1-\theta_l)$. We obtain that
$$
\E(iB'/B)(\la) = 10^{-(k_1+m_1+...+k_l+m_l)} \geq 10^{-\la/\theta_1}.
$$
Since $iB'/B\in \apc$ we obtain that 
$$
\sum_{0\leq \la<T} |\E(iB'/B)(\la)| \leq 10^{T/\theta_1}e^{4\pi T}\sum_{\la\in \r} |\E(iB'/B)(\la)|^2e^{-4\pi \la}=10^{T/\theta_1}e^{4\pi T}\E[|B'/B(\cdot+i)|^2]<\infty.
$$
Hence $\la\mapsto \E(iB'/B)(\la)$ is locally summable. We conclude that if we define the $\fs$-pair $(\mu,a)$ as in  \eqref{def:amu} (with $A=B'$) then
$$
\mu=2\pi \sum_{B(\ga)=0} \del_\ga
$$
has uniformly discrete support and its Fourier transform is supported in $\pm \spec(iB'/B)$. Hence it satisfies the conclusion of Theorem \ref{thm:5}, but $E$ is not a trigonometric polynomial. Finally, let $K_0=\spec(iB'/B)$ and $K_{p}=K_{p-1}'$ for $p\geq 1$, where $K_{p-1}'$ is the set of accumulation points of $K_p$. Assume now that $\lim \theta_n=1/2$. It is then simple to deduce, by the description of $K_0$, that $K_p = K_0+\frac12\{p,p+1,p+2,...\}$, and so the support of $\ft \mu$ is not dense in any interval.
\end{remark}

\subsection{de Branges spaces  - a Hilbert space interpretation of $\fs$-pairs}
A de Branges space \cite{dB} (see also the introduction of \cite{GL}) is a Hilbert space $(\h,\|\cdot\|)$ of entire functions $F:\cp\to\cp$ satisfying:
\begin{enumerate}
\item[(H1)] If $F\in \h$ and $F(w)=0$ for some $w\in \cp$, then $G(z)=F(z)\frac{z-\w}{z-w}$ belongs to $\h$ and $\|G\|=\|F\|$;
\item[(H2)] The functional $F\in \h\mapsto F(w)$ is continuous for every $w\in \cp$;
\item[(H3)] If $F\in \h$ then $F^*\in \h$ and $\|F^*\|=\|F\|$.
\end{enumerate}
Because of (H2), the space $\h$ comes equipped with an unique reproducing kernel $K(w,z)$, that is, $F(w)=\langle F, K(w,\cdot) \rangle $ for any $w\in \cp$ and $F\in \h$.
De Branges proved \cite[Thm. 23]{dB} that for any such Hilbert space $\h$ there exists an Hermite-Biehler function $E$ such that $\h=\h(E)$ isometrically, where $\h(E)$ is the Hilbert space of entire functions $F$ such that\footnote{The original definition involves bounded type theory, but this is an equivalent short definition, see \cite{GL}.}
$$
\|F\|^2:=\sup_{y\in \r} \int_\r \left|\frac{F(x+iy)}{E(x+i|y|)}\right|^2 \d x < \infty,
$$
in which case the sup above is attained at $y=0$. De Branges also shows \cite[Thm. 19]{dB} that any such space $\h(E)$ is a Hilbert space that satisfies the  above axioms.  When we identify $\h=\h(E)$  for some Hermite-Biehler function $E=A-iB$ we have
\begin{align}\label{repker}
K(w,z)=\frac{B(z)\ov{A(w)}-\ov{B(w)}{A(z)}}{\pi(z-\w)} = \frac{E(z)\ov{E(w)}-E^*(z)E(\w)}{2\pi i(\w-z)}
\end{align}
and
$$
F(w)=\int_\r \frac{F(t)\ov{K(w,t)}}{|E(t)|^2}\d t
$$
for every $w\in \cp$ and $F\in \h$. The function $E$ realizing $\h=\h(E)$ is not unique. Indeed, if $\h(E)=\h(E_1)$ isometrically, one can then apply \cite[Problem 69]{dB} (for $S=A_1$ and $S=B_1$) to obtain
$$
E_1(z) = \frac{e^{i\be}}{\sqrt{1-|p|^2}}(E(z)-\ov{p}E^*(z))
$$
for some $\be\in \r$ and $p\in \cp$, with $|p|<1$. Conversely, a routine computation shows that any $E_1$ defined in the above way satisfies \eqref{repker}, and since $0<K(z,z)=(4\pi y)^{-1}(|E_1(z)|^2-|E_1^*(z)|^2)$ for $z\in \H$, we conclude that $E_1$ is Hermite-Biehler, and so $\h(E)=\h(E_1)$ isometrically. Moreover, their theta functions are related by the M\"obius transformation
$$
\Theta_1(z) = e^{2i\be}\frac{\Theta(z)-p}{1-\Theta(z)\ov{p}} .
$$ 

A major result \cite[Thm. 22]{dB} is that, for any $\al\in [0,\pi)$, the set $\{K(\ga,z)\}_{\p(\ga)\equiv \al \, (\text{mod } \pi)}$ is orthogonal in $\h(E)$, and its orthogonal complement is one-dimensional and spanned by $e^{i\al}E-e^{-i\al}E^*$. In particular, if $\al=0$ and $E$ has no real zeros, and $B\notin \h(E)$, then representation of functions in the basis $\{K(\ga,z)\}_{B(\ga)=0}$ implies the identities
$$
\pi K(w,z):=\frac{B(z)\ov{A(w)}-\ov{B(w)}{A(z)}}{z-\w} = \sum_{B(\ga)=0} \frac{1}{\p'(\ga)} \frac{B(z)\ov{B(w)}}{(\ga-z)(\ga-\w)}
$$
and
$$
F(z) = \sum_{B(\ga)=0} \frac{F(\ga)B(z)}{B'(\ga)(z-\ga)}, \quad \text{for any } F\in \h(E),
$$
with convergence in $\h(E)$ and uniformly (and absolutely) in compact sets of $\cp$. Letting $f=iA/B$,  if one divides the expansion of $K$ above by $iB(z)\ov{B(w)}$ we get
$$
\frac{f(z)+\ov{f(w)}}{z-\w} = \frac{1}{2\pi i} \int_\r \frac{\d \mu(t)}{(t-z)(t-\w)}
$$
with $\mu=\sum_{B(\ga)=0} \frac{2\pi}{\p'(\ga)}\del_\ga$.
Assume now that $f=iA/B \in \apc$ and that $\E f(\cdot)$ is locally summable.  Thus, Theorem \ref{thm:HBmupos} applies and $(\mu,a)$ is a $\fs$-pair (with $\a$ as in \eqref{def:amu}). There is no particular reason (other than convenience) to take $\al=0$ here. One could select as well any $\al\in [0,\pi)$, but now the $\fs$-pair would be\footnote{The proof of Theorem \ref{thm:maineven} implies that $(e^{i\al}E-e^{-i\al}E^*)\notin \h(E)$}
\begin{align}
\begin{split}\label{genmuapair}
\mu &= \sum_{\p(\ga)\equiv \al \, (\text{mod } \pi)} \frac{2\pi}{\p'(\ga)}\del_\ga, \\
a(\la) &= \ov{a(-\la)} = \E f (\la) \ (\la >0) \\
 a(0)&=2\re \E f(0),
 \end{split}
\end{align}
assuming $f=(1+e^{-2i\al}\Theta)/(1-e^{-2i\al}\Theta) \in \apc$ (note if $E_\al=e^{i\al}E=A_\al-iB_\al$ then $f=iA_\al/B_\al$).

One could ask the question whether there is another natural axiom, as the ones above, that would force the space $\h$ to ``produce" an $\fs$-pair. Such axiom indeed exists.

\begin{proposition}
Let $(\h,\|\cdot\|)$ be a Hilbert space of entire functions satisfying (H1), (H2) and (H3), with reproducing kernel $K(w,z)$. Assume also that:
\begin{enumerate}
\item[(H4)] There is $\al \in \H$ such that the function
$$
f(z)=\frac{{(\ov{\al}-z)K(\al,z)}-(\al-z)K(\ov{\al},z)}{{(\ov{\al}-z)K(\al,z)}+(\al-z)K(\ov{\al},z)}
$$
belongs to $\apc$ and $\E f(\cdot)$ is locally summable.
\end{enumerate}
Then there is  an Hermite-Biehler function $E=A-iB$ such that $\h=\h(E)$ isometrically, and, if we define $\mu$ and $\a$ as in \eqref{def:amu}, we have that $(\mu,a)$ is a $\fs$-pair.
\end{proposition}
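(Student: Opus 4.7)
The plan is to invoke de Branges' structure theorem (Thm.~23 quoted above) to realize $\h$ isometrically as $\h(E)$ for some Hermite-Biehler $E = A-iB$, and then to exploit the freedom of choice of $E$ within the family $E_1 = (e^{i\be}/\sqrt{1-|p|^2})(E - \ov p E^*)$ (all yielding the same space isometrically) so as to arrange $f = iA_1/B_1$ for a suitably chosen $E_1$. Once this coincidence is secured, hypothesis (H4) becomes precisely the hypothesis of Theorem~\ref{thm:HBmupos} applied to $E_1$, and the $\fs$-pair $(\mu,a)$ follows at once.

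Concretely, I would substitute the explicit formula \eqref{repker} into the definition of $f$, form the Cayley-type transform $g(z) := (f(z)-1)/(f(z)+1) = -(\al-z)K(\ov\al,z)/((\ov\al-z)K(\al,z))$, cancel the common factors of $2\pi i$, divide numerator and denominator by $E(z)$, and use the identities $\ov{E(\ov\al)} = E^*(\al) = \Theta(\al)E(\al)$. The result of this elementary manipulation is
\begin{align*}
g(z) \; = \; \frac{E(\al)}{\ov{E(\al)}}\cdot\frac{\Theta(z)-\Theta(\al)}{1-\Theta(z)\ov{\Theta(\al)}}.
\end{align*}
Since $|\Theta(\al)|<1$ (as $E$ is Hermite-Biehler and $\al\in\H$), this has exactly the Blaschke–M\"obius form $e^{2i\be}(\Theta-p)/(1-\Theta\ov p)$ of the $\Theta_1$ associated to the $E\mapsto E_1$ reparametrization, with $p = \Theta(\al)$ and $e^{2i\be} = E(\al)/\ov{E(\al)}$ (unimodular). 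Taking $E_1 = (e^{i\be}/\sqrt{1-|\Theta(\al)|^2})(E-\ov{\Theta(\al)}E^*)$, the reparametrization discussion preceding the proposition shows that $E_1$ is Hermite-Biehler, $\h(E_1) = \h(E) = \h$ isometrically, and $\Theta_1 = g$. Identity \eqref{eq:AbThetaid} then gives $iA_1/B_1 = (1+\Theta_1)/(1-\Theta_1) = (1+g)/(1-g) = f$.

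By Remark~\ref{rem:ABTheta}, hypothesis (H4) (namely $f\in\apc$ and $\E f$ locally summable) is then equivalent to $A_1/B_1\in\apc$ with $\E(A_1/B_1)$ locally summable, which is exactly the hypothesis of Theorem~\ref{thm:HBmupos} for $E_1$. That theorem delivers the real-antipodal $\fs$-pair $(\mu,a)$ built from $E_1$ via the formulas \eqref{def:amu}, and the proof is complete. The only non-routine step is recognizing that $g$ has precisely the canonical Blaschke form with the correct unimodular phase $E(\al)/\ov{E(\al)}$: once that algebraic match is made, the rest is a direct appeal to de Branges' structure theorem (Thm.~23) and to Theorem~\ref{thm:HBmupos}.
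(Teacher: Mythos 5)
Your proof is correct and arrives at exactly the same $E_1$ as the paper, but by a slightly different route. The paper does not first realize $\h$ as some $\h(E)$ and then reparametrize; it applies the kernel identity of de Branges' Theorem~23 directly, sets $L(w,z)=2\pi i(\w-z)K(w,z)$ and $E_1(z)=L(\al,\al)^{-1/2}L(\al,z)$, verifies from the identity that $E_1$ reproduces $K$, concludes Hermite–Biehler from $K(z,z)>0$, and reads off $\Theta_1 = L(\al,\cdot)^*/L(\al,\cdot)=-L(\ov\al,\cdot)/L(\al,\cdot)$, which is exactly your $g=(f-1)/(f+1)$. You instead invoke the structure theorem as a black box to get an intermediate $E$, substitute the explicit kernel formula \eqref{repker}, and recognize $g$ as the Blaschke–M\"obius transform of $\Theta=E^*/E$ with parameter $p=\Theta(\al)$ and phase $e^{2i\be}=E(\al)/\ov{E(\al)}$, then appeal to the $E\mapsto E_1$ reparametrization lemma. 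The trade-off: your route makes transparent \emph{why} $f$ corresponds to some admissible phase $\al$-normalization (it is literally a change of gauge within the family of Hermite–Biehler generators), whereas the paper's version is more self-contained, effectively re-deriving the relevant part of the structure theorem on the fly and never needing the reparametrization lemma. One small inaccuracy: the final citation of Remark~\ref{rem:ABTheta} is unnecessary and not what that remark says; since $A_1/B_1 = -if$, the equivalence between ``$f\in\apc$ and $\E f$ locally summable'' and ``$A_1/B_1\in\apc$ and $\E(A_1/B_1)$ locally summable'' is immediate from multiplication by the constant $-i$.
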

\begin{proof}
We can then apply \cite[Thm. 23, p. 58]{dB} to obtain that the identity
\begin{align*}
\frac{(K(w,z)-K(\be,z)K(\be,\be)^{-1}K(w,\be))(z-\ov{\be})}{z-\be} = \frac{(K(w,z)-K(\ov{\be},z)K(\ov{\be},\ov{\be})^{-1}K(w,\ov{\be}))(\w-\ov{\be})}{\w-\be}
\end{align*}
holds for all $w,z\in \cp$ and $\be\in \H$. Let $L(w,z)=2\pi i (\w-z)K(w,z)$ and
$$
E_1(z)=L(\al,\al)^{-1/2}L(\al,z).
$$
A routine computation, using the above identity, now shows that 
$$
K(w,z) = \frac{E_1(z)\ov{E_1(w)}-E_1^*(z)E_1(\w)}{2\pi i(\w-z)}.
$$
Since $0<K(z,z)$ for $z\in \H$, we conclude that $E_1$
is of Hermite-Biehler class, thus $\h=\h(E_1)$ isometrically. Letting $E_1=A_1-iB_1$ and noting that $L(\al,z)^*=-L(\ov{\al},z)$,  we deduce that
$$
f(z)=\frac{1+\frac{L(\al,z)^*}{L(\al,z)}}{1-\frac{L(\al,z)^*}{L(\al,z)}}=\frac{1+\Theta_1(z)}{1-\Theta_1(z)}=i\frac{A_1(z)}{B_1(z)}.
$$
We can then apply Theorem \ref{thm:HBmupos} to finish the proof.
\end{proof}

\section{Almost Periodic Functions in $\r$}\label{sec:AP}
We say that a continuous function $f:\r\to\cp$ in almost periodic (in the sense of Bohr \cite{Bo}) if for every $\ep>0$ the following set is relatively dense
$$
\tau_\ep(f):=\{t\in \r : \sup_{x\in \r} |f(x)-f(x+t)|\leq \ep\}.
$$
This set is called the set of $\ep$-translations for $f$. 
We denote by $\apr$ the set of continuous almost period functions $f:\r\to\cp$. It is not hard to show that any $f\in \apr$ is bounded and uniformly continuous, and that $\tau_\ep(f)$ is also closed with non-empty interior. The following is a very useful criteria for almost periodicity (see \cite[p. 7]{AP}). In what follows $C(\r)$ denotes the usual Banach algebra of bounded continuous functions with the topology induced by the sup-norm $\|f\|_{\infty}=\sup_{x\in \r} |f(x)|$. 

In the remaining part of this section we compile necessary facts about almost periodic functions and we provide some proofs for completeness.

\begin{theorem}[Bochner's criterion]
Let $f:\r\to\cp$ be continuous. Then $f\in \apr$ if and only if the set of functions $\{f(\cdot +h)\}_{h\in \r}$ is pre-compact in $C(\r)$.
\end{theorem}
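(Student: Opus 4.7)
The plan is to prove both directions separately, using freely that any $f\in\apr$ is bounded and uniformly continuous (as stated just before the theorem), together with the elementary identity $\|f(\cdot+h_1)-f(\cdot+h_2)\|_\infty=\|f(\cdot+(h_1-h_2))-f(\cdot)\|_\infty$, which follows from translation invariance of the sup-norm.

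For the harder direction ``$\Rightarrow$'', I fix $\ep>0$ and produce a finite $\ep$-net for $\{f(\cdot+h)\}_{h\in\r}$ in $C(\r)$. By relative density of $\tau_{\ep/3}(f)$ there is $l>0$ such that every interval of length $l$ meets $\tau_{\ep/3}(f)$. By uniform continuity of $f$, there is $\delta>0$ with $|s-s'|<\delta\Rightarrow\|f(\cdot+s)-f(\cdot+s')\|_\infty<\ep/3$. Cover the compact window $[0,l]$ by a finite $\delta$-net $\{s_1,\ldots,s_N\}$. Given $h\in\r$, I choose $t\in\tau_{\ep/3}(f)\cap[h-l,h]$ (which is nonempty) so that $h-t\in[0,l]$, and then pick $i$ with $|(h-t)-s_i|<\delta$. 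The elementary identity gives $\|f(\cdot+h)-f(\cdot+(h-t))\|_\infty=\|f(\cdot+t)-f(\cdot)\|_\infty\le\ep/3$, while uniform continuity gives $\|f(\cdot+(h-t))-f(\cdot+s_i)\|_\infty<\ep/3$. The triangle inequality then yields $\|f(\cdot+h)-f(\cdot+s_i)\|_\infty<\ep$, so $\{f(\cdot+s_i)\}_{i=1}^N$ is a finite $\ep$-net and the translation family is totally bounded, hence pre-compact.

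For the direction ``$\Leftarrow$'', precompactness supplies, for each $\ep>0$, a finite $\ep$-net $\{f(\cdot+s_1),\ldots,f(\cdot+s_N)\}$. For every $h\in\r$ there is some index $i=i(h)$ with $\|f(\cdot+h)-f(\cdot+s_{i})\|_\infty\le\ep$, and by the translation-invariance identity this is the same as $\|f(\cdot+(h-s_i))-f(\cdot)\|_\infty\le\ep$, i.e.\ $h-s_i\in\tau_\ep(f)$. Therefore $\r=\bigcup_{i=1}^N\bigl(\tau_\ep(f)+s_i\bigr)$. Setting $M=\max_i|s_i|$, for any $x\in\r$ the specific choice $h=x+M$ yields $h-s_i\in\tau_\ep(f)$ for some $i$, and since $|s_i|\le M$ we have $h-s_i\in[x,x+2M]$. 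Thus every interval of length $2M$ meets $\tau_\ep(f)$, proving that $\tau_\ep(f)$ is relatively dense for every $\ep>0$, i.e.\ $f\in\apr$.

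No deep machinery is needed beyond Bohr's definition and total boundedness in $(C(\r),\|\cdot\|_\infty)$. The main subtlety, which I expect to be the only real obstacle, is the bookkeeping in ``$\Rightarrow$'': one must choose $t$ so that $h-t$ lies in a fixed compact window $[0,l]$ \emph{uniformly} in $h$—this is precisely what relative density of $\tau_{\ep/3}(f)$ supplies—and then uniform continuity of $f$ upgrades a finite $\delta$-net in $[0,l]$ into an $\ep/3$-net in sup-norm among the translates.
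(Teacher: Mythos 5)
Your proof is correct. The paper itself does not prove Bochner's criterion; it only states it as a known result with a citation to Amerio--Prouse, so there is no in-paper argument to compare against. What you have written is the standard textbook proof: in the forward direction, pair relative density of $\tau_{\ep/3}(f)$ with uniform continuity of $f$ to fold any translate back into a fixed compact window and then onto a finite $\delta$-net of that window, yielding total boundedness; in the reverse direction, read a finite $\ep$-net of translates as a covering $\r=\bigcup_i\bigl(\tau_\ep(f)+s_i\bigr)$, which immediately forces $\tau_\ep(f)$ to be relatively dense with gap bound $2\max_i|s_i|$. The only cosmetic point worth noting is that the paper's definition of relative density uses open intervals $(x,x+l)$, whereas your argument lands a point of $\tau_\ep(f)$ in the closed interval $[x,x+2M]$; enlarging $l$ to, say, $2M+1$ closes this gap trivially. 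One also implicitly uses that $C(\r)$ with the sup-norm is complete so that totally bounded implies pre-compact, which is standard and consistent with the paper's setup of $C(\r)$ as a Banach algebra of bounded continuous functions.
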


\begin{remark}
Bochner's criterion works also for functions $f:\r\to \cp^n$. In particular, if $f_j\in \apr$ for $j=1,...,N$, then $\cap_{j=1}^N \tau_\ep(f_j)$ is nonempty and relatively dense.
\end{remark}

It is now straightforward to show that $\apr$ closed under multiplication, addition and uniform convergence, hence $\apr$ is a closed subalgebra of $C(\r)$. Moreover, it also follows that for every continuous $g:\cp\to\cp$ we have $g\circ f \in \apr$ whenever $f\in \apr$. Since exponentials $e^{2\pi i x \la}$, with $\la \in \r$, are periodic, we conclude by Bochner's criterion that any \emph{trigonometric polynomial }
$$
p(x)=\sum_{n=1}^N a_n e^{2\pi i \la_n x} \quad \quad (\la_n\in \r)
$$
belongs to $\apr$.

\begin{lemma}\label{lem:polyaprox}
Given any $f\in \apr$ and $\ep>0$, there is a trigonometric polynomial $p$ such that $\|f-p\|_\infty<\ep$. In particular, $\apr$ is the closure in $C(\r)$ of the algebra of trigonometric polynomials. 
\end{lemma}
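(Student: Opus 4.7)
The approach is the classical Bochner--Fej\'er construction. I would first establish that every $g\in\apr$ admits a well-defined mean value
$$
M\{g\}:=\lim_{T\to\infty}\frac{1}{2T}\int_{-T}^T g(x)\,\d x,
$$
which is translation-invariant and satisfies $M\{|g|^2\}\geq 0$. Existence of the limit follows from Bochner's criterion by a Cauchy argument: given $\ep>0$, compare $\frac{1}{2T}\int_{-T}^T g$ with $\frac{1}{2S}\int_{-S}^S g$ after translating one of them by an element of $\tau_\ep(g)$; the uniform continuity of $g$ together with the relative denseness of $\ep$-translations forces the two averages to agree up to $O(\ep)$ once $S,T$ are large.

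Next, for any $\la\in\r$ the product $f(x)e^{-2\pi i\la x}$ lies in $\apr$, so the Fourier coefficients $c(\la):=M\{f(x)e^{-2\pi i\la x}\}$ are well defined. Expanding $M\{|f-\sum_{j=1}^N c(\la_j)e^{2\pi i\la_j x}|^2\}\geq 0$ and using $M\{e^{2\pi i(\la_j-\la_k)x}\}=\delta_{jk}$ (for distinct $\la_j$) yields Bessel's inequality
$$
\sum_\la |c(\la)|^2\leq M\{|f|^2\},
$$
so the spectrum $\Lambda(f):=\{\la\in\r:c(\la)\neq 0\}$ is at most countable.

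The central step is the construction of uniformly approximating trigonometric polynomials. Enumerate $\Lambda(f)=\{\la_1,\la_2,\ldots\}$. For each $n$, select a $\q$-linearly independent set $\{\be_1,\ldots,\be_m\}\subset\r$ and an integer $N\geq 1$ such that each of $\la_1,\ldots,\la_n$ admits the representation $\sum_j k_j\be_j/N$ with $k_j\in\z$. Form the nonnegative Bochner--Fej\'er kernel
$$
K_n(y):=\prod_{j=1}^m \frac{1}{nN}\bigg|\sum_{k=0}^{nN-1}e^{2\pi ik\be_j y/N}\bigg|^2,
$$
whose expansion is the finite sum $K_n(y)=\sum \prod_j(1-|k_j|/(nN))\,e^{2\pi i\sum_j k_j\be_j y/N}$ over $|k_j|<nN$, so $K_n\geq 0$, $M\{K_n\}=1$. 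Then
$$
\si_n(x):=M_y\{f(x+y)K_n(y)\}
$$
is a trigonometric polynomial whose frequencies lie in $\{\sum_j k_j\be_j/N:|k_j|<nN\}$, and its coefficient at each such frequency is the corresponding Fej\'er weight times $c(\sum_j k_j\be_j/N)$.

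Finally, I would prove $\|f-\si_n\|_\infty\to 0$ by combining the approximate-identity property of $K_n$ with the equicontinuity of the translate family $\{f(\cdot+h)\}_{h\in\r}$ provided by Bochner's criterion, following the pattern of Fej\'er's classical proof on the circle, while letting $m,N,n$ grow so that more and more of $\Lambda(f)$ is captured with weights tending to $1$. The main obstacle is precisely this \emph{uniform} (as opposed to $L^2$) convergence: one must simultaneously exploit $K_n\geq 0$, the almost-periodic equicontinuity of $f$, and a rational basis $\{\be_j\}$ rich enough to approximate every frequency in $\Lambda(f)$ arbitrarily well, in order to bound the error uniformly in $x\in\r$. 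The second assertion, that $\apr$ is the $C(\r)$-closure of the trigonometric polynomials, is then immediate since trigonometric polynomials belong to $\apr$ and $\apr$ is closed in $C(\r)$.
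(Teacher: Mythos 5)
Your approach is the classical Bochner--Fej\'er construction, which is a genuinely different route from the paper's. The paper gives a self-contained compactness argument: it builds smooth compactly supported bump functions $\p_M$ whose supports sit inside the translation set $\tau_\ep \cap (-M,M)$, with $\int \p_M = 1$, $\int|\p_M'| = O_\ep(1)$, and $\sum_n |\ft\p_M(n/(4M))|^{1/2} = O_\ep(1)$; it then writes $f_M = f * \p_M$, which is uniformly $\ep$-close to $f$, expands $f_M$ on $[-M,M]$ as a Fourier series with absolutely summable coefficients, and passes to the limit $M\to\infty$ via a Cantor diagonal extraction to land on an absolutely convergent trigonometric series within $\ep$ of $f$, which it then truncates. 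No mean-value theory, Bessel inequality, rational bases, or Bochner--Fej\'er kernels are needed. Your route instead develops $M\{\cdot\}$, the Fourier--Bohr coefficients $c(\la)$, Bessel's inequality, and then the composite Bochner--Fej\'er kernels $K_n$ built from a $\q$-basis of the spectrum. What your approach buys is conceptual familiarity and closeness to the standard treatment in Besicovitch; what the paper's buys is a short, self-contained, and essentially elementary argument that avoids the delicate kernel machinery entirely.

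The genuine gap in your sketch is exactly where you flag the ``main obstacle'': the passage from the mean-square (Bessel) estimate to a \emph{uniform} estimate $\|f - \si_n\|_\infty \to 0$. You cannot deduce this from the ``approximate-identity property of $K_n$'' in the way Fej\'er's circle proof works, because the mean value $M_y\{\cdot\}$ does not localize: $K_n$ is a trigonometric polynomial with no concentration near $y=0$, so the identity $f(x)-\si_n(x) = M_y\{(f(x)-f(x+y))K_n(y)\}$ does not become small merely from the uniform continuity of $f$. The classical Bochner--Fej\'er uniform-convergence theorem is a multi-step argument requiring a nontrivial use of the translation structure (and in Besicovitch's exposition runs for several pages); ``combining'' the nonnegativity of $K_n$, the equicontinuity of translates, and growing rational bases is the statement of the theorem, not a proof of it. As written, the proposal reduces the lemma to an unproven theorem of essentially the same strength. (Incidentally, the paper does separately invoke Besicovitch's Bochner approximation as Proposition~\ref{prop:bochapprox}, but it proves Lemma~\ref{lem:polyaprox} independently by the bump-function construction precisely so as not to rest the lemma on that deeper result.)
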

\begin{proof}
We can assume $\|f\|_\infty\leq 1$. For a given $\ep>0$ let $\tau_\ep$ be the set of $\ep$-translations for $f$.  First we will need some auxiliary functions. We claim that for every sufficiently large $M\geq 1$ there exists $\p_M\in C^\infty_c(\tau_\ep \cap (-M,M))$ such that $0\leq \p_M\leq 1$, $\int_\r \p_M =1$, $\int_\r |\p_M'| =O_{\ep}(1)$ and 
$$
\sum_{n\in \z} |\ft \p(n/(4M))|^{1/2} = O_{\ep}(1).
$$
Assuming this claim is true, we now finish the proof. Let
$$
f_M(x) = \int_\r f(x+t)\p_M(t)\d t
$$
and note that $\|f-f_M\|_\infty\leq \ep$ for any $M$. Also note that if $|x|\leq M$ then
$$
f_M(x)=  \int_{-2M}^{2M} f(t)\p_M(t-x)\d t. 
$$
The function $\p_M(x)$ can be identified with a $4M$-periodic  $C^\infty$-function and so we have
$$
\p_m(x)=\sum_{n\in\z} \theta_n e^{2\pi i n x/(4M)}
$$
where $\sum_{n\in \z} |\theta_n|^2 = (4M)^{-1}\int_{-2M}^{2M} |\p_M(x)|^2\d x$ and 
$$
\theta_n = (4M)^{-1}\int_{-2M}^{2M} \p_M(x)e^{-2\pi i n x/(4M)}\d x = \ft \p_M(n/(4M))/(4M).
$$
We obtain
$$
f_M(x) = \sum_{n\in\z} \wt \theta_n e^{-2\pi i n x/(4M)}  \quad \text{ for } |x|\leq M,
$$
where $\wt \theta_n = \theta_n \int_{-2M}^{2M}f(t) e^{2\pi i n t/(4M)}\d t$. Note this representation converges absolutely since $ |\wt \theta_n| \leq  4M|\theta_n| = |\ft \p_M(n/(4M))| \leq 1$, $\sum_{n\in\z} |\wt \theta_n|^{1/2} = O_{\ep}(1)$, and so, $\sum_{n\in\z} |\wt \theta_n|=O_{\ep}(1)$. Now enumerate $(\wt \theta_n)_{n\in \z}$ in decreasing order of magnitude and call this new sequence $(\al_{M,n})_{n\geq 1}$.  We obtain
$$
f_M(x) = \sum_{n\geq 1} \al_{M,n} e^{2\pi i \la_{M,n} x}  \quad \text{ for } |x|\leq M,
$$
for some $\la_{M,n}\in \frac{1}{4M}\z$. Noticing that 
$$
|\al_{M,n}|^{1/2}n \leq \sum_{j=1}^n |\al_{M,j}|^{1/2} = O_{\ep}(1),
$$
we obtain $|\al_{M,n}|\leq n^{-2}$. A standard Cantor's diagonal procedure guarantees the existence of $(\al_n)_{n\geq 1}$ such that $\sum_{n\in\z} |\al_n| = O_{\ep}(1)$ and a subsequence $M_k\to\infty$ such that $\lim_k \al_{M_k,n}=\al_n$ for all $n\geq 1$.  Let now $I\subset \n$ be the $n$'s such that $\sup_{k} |\la_{M_k,n}| < \infty$.  By further taking a subsequence of the $M_k$'s we can assume that $\lim_k \la_{M_k,n} \to \la_n$ for $n\in I$.  Note now that
$$
|\al_{M_k,n}| \leq |\ft \p_M(-\la_{M_k,n})|  \leq  \frac{1}{2\pi |\la_{M_k,n}|} \int_{\r} |\p_M'(x)|\d x  = O_\delta(|\la_{M_k,n}|^{-1}).
$$
In particular, $\lim_k \al_{M_k,n} = \al_n=0$ if $n\notin I$. Finally, the uniform bound $|\al_{M,n}|\leq n^{-2}$ forces $f_{M_k}$ to converge uniformly in compact sets to 
$$
g(x) =  \sum_{n\geq 1} \al_{n} e^{2\pi i \la_{n} x},
$$
which in particular implies that
$$
\|f-g\|_\infty \leq \ep.
$$
However, we can now simply truncated $g$ to find a trigonometric polynomial $p$ such that $\|f-p\|_\infty \leq 2\ep$.

It remains to construct the auxiliary functions $\p_M$. Since $f$ is also uniformly continuous, it is easy to show that $\{t_n\}_{n\in \z} + (-\delta,\delta) \subset \tau_{\ep/4}$ for some small $\delta=\delta_\ep>0$ and some sequence $\{t_n\}_{n\in \z}$ satisfying $1/\delta \leq t_{n+1}-t_n \leq 3/\delta$ for all $n$. Take $h \in C^\infty_c(-1,1)$ even with $0\leq h\leq 1$ and $\int_\r h=1$. Let $h_\delta(x)=h(x/\delta)/\delta$, $\psi(x)=(2N+1)^{-1}\sum_{n=-N}^N h_\delta(x-t_n)$ and $\p_M(x)=\psi * \psi *\psi * \psi$, where $N\geq 1$ is selected to be the largest such that $\{t_n\}_{|n|\leq N} + (-\delta,\delta) \subset \tau_{\ep/4}\cap (-M/4,M/4)$. Note we must have $N\geq \frac{\delta M}{10}$ for sufficiently large $M$. Since $\tau_{\ep/4} + \tau_{\ep/4}+\tau_{\ep/4}+\tau_{\ep/4}\subset \tau_\ep$ we conclude that $\p_M \in C^\infty_c(\tau_\ep \cap (-M,M))$. Also note that $\int_\r \p_M = (\int_\r \psi)^4 = 1$ and, since $\p_M'=\psi'*\psi*\psi *\psi$, that 
$$
\int_\r |\p'_M| \leq \int _\r|\psi'|  = O(1/\delta).
$$
Finally note that 
\begin{align*}
\sum_{n\in\z} |\ft \p_M(n/(4M))|^{1/2} = \sum_{n\in\z} |\ft \psi(n/(4M))|^{2} & = 4M \int_{-2M}^{2M}|\psi(x)|^2\d x \\ & = \frac{4M}{\delta (2N+1)} \int_{-1}^1 |h(x)|^2 \d x  \leq \frac{4M}{\delta (2N+1)}  \ll \delta^{-2},
\end{align*}
for $M$ large enough. This finishes the proof.
\end{proof}

We define
$$
\E f := \lim_{T\to\infty} \frac{1}{2T}\int_{-T}^T f(x)\d x \quad \text{and} \quad \E f(\la):=\lim_{T\to\infty} \frac{1}{2T}\int_{-T}^T f(x)e^{-2\pi i \la x}\d x,
$$
whenever these limits exist (so $\E f = \E f(0)$). 

\begin{lemma}\label{lem:propAP}
For any $f\in \apr$ we have:
\begin{enumerate}
\item The average $\E f(\la)$ exists;
\item The set
$
\spec(f) := \{\la\in \r : \E f(\la) \neq 0\}
$
it at most countable;
\item If $\spec(f)=\emptyset$ then $f=0$;
\item If $\sum_{\la\in \r} |\E f(\la)|<\infty$, then the following series converges absolutely and uniformly
$$
f(x)=\sum_{\la\in \r} \E f(\la)e^{2\pi i \la x};
$$
\item We have $\E |f|^2 = \sum_{\la} |\E f(\la)|^2$ and for every $\ep>0$ there is $S\subset \spec(f)$ finite such that $\E_x|f(x)-\sum_{\la\in S}\E f(\la)e^{2\pi i \la x }|^2<\ep$.
\end{enumerate}
\end{lemma}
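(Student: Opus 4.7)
The plan is to establish the five parts in the order (1), (2), (5), (3), (4), using the approximation theorem (Lemma \ref{lem:polyaprox}), the orthogonality of distinct exponentials under the mean, and the auxiliary fact that a nonnegative $g \in \apr$ with $\E g = 0$ must vanish identically. The main obstacle is (1), after which the remaining parts follow by standard Hilbert-space-style manipulations.

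For (1), since $\apr$ is an algebra containing all $e^{2\pi i \la x}$, the function $f(\cdot)e^{-2\pi i \la \cdot}$ lies in $\apr$, so it suffices to prove $\E f$ exists. Given $\ep>0$, fix an $\ep$-translation set $\tau_\ep$ relatively dense with gap $\leq l=l_\ep$. For $T<T'$ large, cover $[-T',T']$ by $N\sim T'/T$ shifts $[-T+\tau_n,T+\tau_n]$ with $\tau_n\in\tau_\ep$, having bounded overlap and leftover length $O(l)$. The almost-periodicity bound $|\int_{-T}^T f\d x - \int_{-T+\tau_n}^{T+\tau_n} f\d x|\leq 2T\ep$ summed over $n$ yields
$$
\bigg|\int_{-T'}^{T'} f - N\int_{-T}^T f\bigg|=O(NT\ep+\|f\|_\infty l),
$$
whence $|a_{T'}-a_T|=O(\ep+l/T')$ with $a_T:=\frac{1}{2T}\int_{-T}^T f\d x$. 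Taking $T,T'$ large proves the Cauchy property and hence (1).

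For (2), note that for $\la\neq \mu$, applying (1) to the AP function $e^{2\pi i (\la-\mu)x}$ gives $\E(e^{2\pi i \la x}\ov{e^{2\pi i\mu x}})=0$, so for any finite $S\subset\r$ and coefficients $(c_\la)_{\la\in S}$,
$$
\E\bigg|f-\sum_{\la\in S}c_\la e^{2\pi i\la x}\bigg|^2 = \E|f|^2 - \sum_{\la\in S}\ov{c_\la}\E f(\la)-\sum_{\la\in S}c_\la\ov{\E f(\la)}+\sum_{\la\in S}|c_\la|^2.
$$
Setting $c_\la=\E f(\la)$ gives Bessel's inequality $\sum_{\la\in S}|\E f(\la)|^2\leq \E|f|^2\leq \|f\|_\infty^2$, which forces $\spec(f)$ to be at most countable, proving (2). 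The auxiliary lemma follows by a density argument: if $g\in\apr$ is real nonnegative with $g(x_0)=c>0$, uniform continuity gives $\delta>0$ with $g\geq c/2$ on $(x_0-\delta,x_0+\delta)$; a $(c/4)$-translation set $\tau$ with gap $\leq l$ yields $g\geq c/4$ on a set of lower density $\geq \delta/l$, so $\E g>0$.

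For (5), given $\ep>0$ pick by Lemma \ref{lem:polyaprox} a trigonometric polynomial $p$ with frequency set $S$ and $\|f-p\|_\infty\leq \ep$. Since the choice $c_\la=\E f(\la)$ minimizes the left side of the displayed identity over all $(c_\la)_{\la\in S}$,
$$
\E|f|^2-\sum_{\la\in S}|\E f(\la)|^2=\E\bigg|f-\sum_{\la\in S}\E f(\la)e^{2\pi i\la x}\bigg|^2\leq \E|f-p|^2\leq \ep^2,
$$
which proves both halves of (5). For (3), $\spec(f)=\emptyset$ combined with Parseval gives $\E|f|^2=0$, and the auxiliary lemma applied to $|f|^2\in\apr$ forces $f\equiv 0$. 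Finally for (4), set $g(x):=\sum_\la\E f(\la)e^{2\pi i\la x}$; absolute summability makes the series uniformly convergent with partial sums trigonometric polynomials, so $g\in\apr$, and termwise computation of $\E g(\la)$ (justified by uniform convergence) gives $\E g(\la)=\E f(\la)$ for every $\la$. Thus $\spec(f-g)=\emptyset$ and (3) forces $f=g$.
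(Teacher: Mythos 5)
Your proof is correct. It covers all five items and makes essentially sound use of the orthogonality of distinct exponentials under $\E$, Bessel's inequality, and the approximation lemma. The route differs from the paper's in two places, both worth noting.

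For item (1), the paper obtains existence of the mean as a one-line consequence of Lemma \ref{lem:polyaprox}: choose $p$ with $\|f-p\|_\infty\leq\ep$ and observe that $\E p$ exists trivially, so the partial means of $f$ are Cauchy up to $O(\ep)$. You instead give a direct covering argument from the definition of almost periodicity (essentially Bohr's original proof), tiling $[-T',T']$ by translates of $[-T,T]$ along an $\ep$-translation set and controlling the tiling errors by $O(l\|f\|_\infty)$ per block. This is more work, but it is self-contained and does not presuppose the approximation theorem; logically it is the more primitive statement. (One small accounting slip: the tiling error aggregated over $N\sim T'/T$ blocks is $O(Nl\|f\|_\infty)$, so after dividing by $2T'$ the error term should read $O(\ep + l/T)$ rather than $O(\ep + l/T')$; since both $T,T'\to\infty$ this does not affect the conclusion.)

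For item (2), the paper shows $\spec(f)\subset\bigcup_n\spec(p_n)$ where $p_n$ are approximating polynomials; you derive countability from Bessel's inequality $\sum_{\la\in S}|\E f(\la)|^2\leq\|f\|_\infty^2$, which gives the quantitatively sharper fact that $\{\la:|\E f(\la)|\geq1/k\}$ is finite of cardinality $\leq k^2\|f\|_\infty^2$. For (5) you pick the trigonometric polynomial $p$ from Lemma \ref{lem:polyaprox} with frequency set $S$, and invoke the projection-minimality identity directly to conclude $\E|f|^2-\sum_{\la\in S}|\E f(\la)|^2\leq\E|f-p|^2\leq\ep^2$; this is slightly cleaner than the paper's two-step bookkeeping with partial sums $f_{N_m}$, but it is the same Hilbert-space argument. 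Your ordering (5) before (3) is a bit tidier: (3) then falls out of Parseval plus the auxiliary positivity lemma, which is the same translation-density argument the paper uses inline in its proof of (3). Item (4) is identical in both.
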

\begin{proof}
For item (1), we can assume $\la=0$. Let $\ep>0$ be given and take a trigonometric polynomial $p$ such that $\|f-p\|_\infty\leq \ep$. Let $A=\E p$, which exists by direct computation. We have
\begin{align*}
\frac{1}{2T}\int_{-T}^T f(x)\d x - \frac{1}{2T'}\int_{-T'}^{T'} f(x)\d x & = O(2\ep) + \frac{1}{2T}\int_{-T}^T p(x)\d x - \frac{1}{2T'}\int_{-T'}^{T'} p(x)\d x \\ & = O(2\ep)  + A - A + o_{T,T'}(1).
\end{align*}
For item (2), note first that $\E f(\la)$ exists since $f(x)e^{-2\pi i \la x}$ is also almost periodic (by Bochner's criterion). Let $p_n$ be a trigonometric polynomial such that
$
\|f-p_n\|_\infty < \frac{1}{n}.
$
Assume that $|\E f(\la)|>0$ and let $1/n<|\E f(\la)|$. We obtain that
$|\E p_n(\la)| \geq |\E f(\la)|-\frac{1}{n} > 0$, hence $\E p_n(\la)\neq 0$. 
We conclude that $\spec(f) \subset \bigcup_{n\geq 1} \spec(p_n)$.
For item (3), let $\ep>0$ be given and take a trigonometric polynomial $p$ such that $\|\bar f-p\|_\infty\leq \ep$. Now, observing that
$$
\frac{1}{2T}\int_{-T}^T |f(x)|^2\d x = \frac{1}{2T}\int_{-T}^T f(x)p(x)\d x + \frac{1}{2T}\int_{-T}^T f(x)(\bar f(x)-p(x))\d x = o_T(1) + O(\ep\|f\|_\infty),
$$
we conclude that $\lim_T \frac{1}{2T}\int_{-T}^T |f(x)|^2\d x=0$. This is impossible if $f$ is nonzero, because $|f|^2$ is also almost periodic and uniformly continuous, and so if $|f(x)|^2 \geq c$ for $x$ in some interval $I$ then $|f(x)|^2 \geq c/2$ for $x\in \tau_{c/2}(|f|^2)\
 + I$. Since $\tau_{c/2}(|f|^2)$ contains some increasing sequence $\{t_n\}_{n\in \z}$ satisfying $t_{n+1}-t_n =O(1)$, it implies that $\E |f|^2 \geq \E ({\bf 1}_{\tau_{c/2}(|f|^2) + I})>0$, which is absurd. We conclude that $f=0$.
For item (4), note that 
$$
g(x)=\sum_{\la\in \r} \E f(\la)e^{2\pi i \la x},
$$
is well-defined and converges absolutely and uniformly on $\r$. Also note that dominated convergence implies that $\E g(\la)=\E f(\la)$ for all $\la\in \r$, and so by item (3) $f=g$.
For item (5), let $ \Lambda':=\cup_{m\geq 1} \spec(p_m)$ for some trigonometric polynomials $p_m$ such that $\|f-p_m\|_\infty<1/m$ and enumerate $\Lambda'=\{\la_n\}_{n\geq 1}$. It is enough to show that $\E |f|^2 = \sum_{n\geq 1} |\E f(\la_n)|^2$. Let $f_N(x) = \sum_{n=1}^N \E f(\la_n) e^{2\pi i \la_n x}$. A straightforward computation shows that $\E |f|^2 - \E |f_N|^2 = \E |f-f_N|^2$ and so $\sum_{n=1}^N |\E f(\la_n)|^2 \leq \E |f|^2$ for all $N$. Another routine computation shows
$$
\E |f-p|^2 = \E |f|^2 - \E |f_N|^2 + \E |f_N-p|^2 \geq \E |f|^2 - \E |f_N|^2,
$$
for any trigonometric polynomial of the form $p(x)=\sum_{n=1}^N b_n e^{2\pi i \la_n x}$, for some $b_n\in \cp$. Since $p_m$ is of such form for some $N_m$, we conclude that $\E |f-f_{N_m}|^2 < 1/m$ and that 
$$
\E |f_{N_m}|^2 \leq \E |f|^2 \leq \E |f_{N_m}|^2 +2/m.
$$
\end{proof}
The following proposition  (see \cite[p. 46]{Be}) justifies the notation 
$$
f(x)\sim \sum_{\la\in\r} \E f(\la) e^{2\pi i \la x}.
$$

\begin{proposition}[Bochner's approximation]\label{prop:bochapprox}
For any $f\in \apr$ there is an sequence of functions $c_n: \r \to [0,1]$, $c_n\leq c_{n+1}$, each $c_n$ with finite support, satisfying 
$$
\lim_{n\to\infty} c_{n}(\la) = \begin{cases} 1 &\mbox{if }\ \E f(\la)\neq 0 \\ 
 0 &\mbox{if }\  \E f(\la)= 0,
\end{cases}
$$
and such that 
$$
\lim_{n\to\infty} \sup_{x\in \r} |f(x) - \sum_{\la\in\r} c_n(\la)\E f(\la) e^{2\pi i \la x}|=0.
$$
\end{proposition}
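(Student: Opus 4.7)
The plan is to carry out the classical Bochner–Fej\'er summation, arranged so that the coefficients form a monotone sequence supported in $\spec(f)$. First, I would enumerate $\spec(f) = \{\la_k\}_{k\geq 1}$ using Lemma \ref{lem:propAP}(2), and choose inductively for each $n$ a finite $\q$-linearly independent set $\{\be_1^{(n)},\ldots,\be_{r_n}^{(n)}\} \subset \r$ together with a positive integer $N_n$ so that the additive group $G_n := \frac{1}{N_n}\bigoplus_{j=1}^{r_n} \z \be_j^{(n)}$ contains $\la_1,\ldots,\la_n$ and embeds into $G_{n+1}$ with $N_n \mid N_{n+1}$. (This is possible by Steinitz exchange together with taking lcm's of denominators.) Then, for integer parameters $M_n \uparrow \infty$ to be chosen later, introduce the Bochner–Fej\'er kernel
$$
K_n(t) := \prod_{j=1}^{r_n} F_{M_n}\!\Bigl(\tfrac{\be_j^{(n)} t}{N_n}\Bigr), \qquad F_M(s) := \tfrac{1}{M+1}\Bigl(\tfrac{\sin\pi(M+1)s}{\sin\pi s}\Bigr)^2.
$$
Each $K_n$ is a nonnegative almost periodic function with finite spectrum in $G_n$; by a Kronecker–Weyl argument on $(\r/\z)^{r_n}$ exploiting the $\q$-linear independence of the $\be_j^{(n)}$'s, one has $\E K_n = 1$, and for $\la = \sum_j (a_j/N_n)\be_j^{(n)} \in G_n$ the coefficient is $\E[K_n(t)e^{-2\pi i \la t}] = \prod_j \max(0,\, 1-|a_j|/(M_n+1)) \in [0,1]$, while $\E[K_n(t)e^{-2\pi i \la t}] = 0$ for $\la \notin G_n$.

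Second, I would set
$$
c_n(\la) := \E[K_n(t) e^{-2\pi i \la t}] \cdot \1_{\spec(f)}(\la), \qquad \sigma_n(x) := \sum_{\la\in\r} c_n(\la)\,\E f(\la)\, e^{2\pi i \la x}.
$$
By construction $c_n$ takes values in $[0,1]$, is supported in the finite set $\spec(f) \cap G_n$, and $\sigma_n$ is a trigonometric polynomial; a term-by-term computation identifies $\sigma_n(x) = \E_t[K_n(-t)f(x+t)]$. To establish uniform convergence $\|f - \sigma_n\|_\infty \to 0$, for any $\ep>0$ I would use Lemma \ref{lem:polyaprox} to pick a trigonometric polynomial $p$ with $\|f-p\|_\infty < \ep$; the contraction estimate $\|\sigma_n(f)-\sigma_n(p)\|_\infty \leq \|f-p\|_\infty \cdot \E K_n = \ep$ reduces the task to showing $\|\sigma_n(p) - p\|_\infty \to 0$, which holds once $n$ is large enough that $\spec(p) \subset G_n$, since then $\sigma_n(p)$ is obtained from $p$ by multiplying each of the finitely many coefficients by a number tending to $1$ as $M_n \to \infty$.

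It remains to arrange $0 \leq c_n \leq c_{n+1}$ together with $c_n(\la) \to \1_{\spec(f)}(\la)$. The support/off-spectrum conditions are built in, and the on-spectrum limit $c_n(\la)\to 1$ follows from $M_n\to\infty$. For monotonicity, if $\la\in G_n$ has coordinates $(a_j)$ in $G_n$ and $(b_j)$ in $G_{n+1}$, then $(b_j)$ is obtained from $(a_j)$ by a fixed integer matrix $T_n$ encoding the inclusion $G_n\hookrightarrow G_{n+1}$; choosing $M_{n+1}$ large enough relative to $M_n$ and $\|T_n\|$ forces $\widehat{K_{n+1}}(\la) \geq \widehat{K_n}(\la)$ for every $\la \in G_n$, while for $\la\notin G_n$ we have $\widehat{K_n}(\la)=0 \leq \widehat{K_{n+1}}(\la)$. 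The main obstacle is precisely this simultaneous control: $M_n$ must grow fast enough for the uniform approximation of $f$ (requiring $p_n$-dependent lower bounds) while being calibrated against $T_n$ to preserve monotonicity at every refinement. A standard diagonal induction—selecting $G_n$, then a trigonometric polynomial $p_n$ with $\|f - p_n\|_\infty < 1/n$, then $M_n$ large enough to meet both demands—resolves this and yields the sequence $(c_n)$ with the required properties.
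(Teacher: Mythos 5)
The paper does not prove this proposition; it cites Besicovitch \cite[p.~46]{Be}, and your Bochner--Fej\'er construction is indeed the classical argument that reference develops. Your plan and formulas are correct: the kernel $K_n$ is nonnegative with $\E K_n = 1$, the identification $\sigma_n(x)=\E_t[K_n(-t)f(x+t)]$ is right, the contraction estimate is exactly the mechanism, and your treatment of the extra requirements $c_n\leq c_{n+1}$ and $c_n(\la)\to \1_{\spec(f)}(\la)$ (which are not part of the bare Bochner--Fej\'er theorem) by calibrating $M_n$ against the transition matrices $T_n$ is the right idea and workable because the relevant constraints are finite in number at each stage.

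There is, however, a genuine gap in the uniform-convergence step. You reduce to showing $\|\sigma_n(p)-p\|_\infty\to 0$ ``once $n$ is large enough that $\spec(p)\subset G_n$,'' but the polynomial $p$ is produced by Lemma~\ref{lem:polyaprox}, whose conclusion puts no constraint whatsoever on $\spec(p)$; meanwhile your groups $G_n$ are generated by initial segments of $\spec(f)$. In general $\spec(p)$ need not lie in $\bigcup_n G_n$ (the $\mathbb Q$-span of $\spec(f)$), so the reduction step is not justified, and your closing ``diagonal induction'' does not repair it because you select $G_n$ \emph{before} $p_n$. Two fixes are available: (i) upgrade the approximant to one with $\spec(p)\subset\spec(f)$ -- for instance, take $q$ from Lemma~\ref{lem:polyaprox} with $\|f-q\|_\infty<\ep/3$, form an auxiliary Bochner--Fej\'er operator $\sigma$ built from a basis for $\spec(q)$ with parameter large enough that $\|\sigma q - q\|_\infty<\ep/3$, and set $p=\sigma f$; then $\|f-p\|_\infty<\ep$ by the contraction estimate and $\spec(p)\subset\spec(f)$ because $\widehat{\sigma f}(\la)=\widehat K(\la)\,\E f(\la)$; or (ii) reverse the order of selection in the diagonal scheme so that $p_n$ is chosen first and $G_n$ is taken to contain $G_{n-1}\cup\{\la_1,\dots,\la_n\}\cup\spec(p_n)$ -- note that the indicator $\1_{\spec(f)}$ in your definition of $c_n$ keeps its support inside $\spec(f)$ even when $G_n$ picks up extraneous frequencies. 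With either repair, the remaining calibration of $M_n$ you describe goes through as sketched.
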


\section{Almost Periodic Functions in $\H$}\label{sec:apc} Recall that we have defined in the introduction the space $\apc$ of holomorphic almost periodic functions $f:\H\to\cp$ such that for every $\ep>0$ there is a relatively dense set of $\ep$-translations $\tau_\ep(f) \subset \r$ satisfying
$$
\sup_{\ep < \Im z < 1/\ep} |f(z)-f(z+t)|<\ep
$$ 
for every $t\in \tau_\ep(f)$. We say a function $f:\H \to \cp$ is \emph{bounded on strips} if $$\sup_{\ep < \Im z < 1/\ep} |f(z)|<\infty$$ for every $\ep>0$. We now give an alternative characterization of almost periodicity.

\begin{lemma}\label{lem:altdefapc}
Let $f : \H \to \cp$ be holomorphic. Then following are equivalent:
\begin{enumerate}
\item $f$ is bounded on strips and for every $h>0$ we have $f(\cdot +ih)\in \apr$;
\item $f$ is bounded on strips and there is $h>0$ such that $f(\cdot + ih) \in \apr$;
\item $f\in \apc$;
\end{enumerate}
In this case the quantity
$$
\E f(\la) := \lim_{T\to\infty} \frac{1}{2T}\int_{-T+iy}^{T+iy} f(z)e^{-2\pi i \la z}\d z
$$
exists for every $\la\in\r$, is independent of $y>0$, it is nonzero in at most countably many $\la$'s and
$$
\sum_{\la\in\r}|\E f(\la)|^2e^{-4\pi y \la} = \E [|f(\cdot +iy)|^2]
$$
for every $y>0$.
\end{lemma}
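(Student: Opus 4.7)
The statement bundles three equivalences with a formula for the spectral means, and my plan is to prove them in the order $(1)\Rightarrow(2)$ (trivial), $(3)\Rightarrow(1)$, $(2)\Rightarrow(3)$, and then derive the $\E f(\lambda)$ assertions by reducing to the real-line theory of Section~\ref{sec:AP}. For $(3)\Rightarrow(1)$, I would specialize the definition of $\apc$ to a thin strip $h-\delta<\Im z<h+\delta$ to read off $f(\cdot+ih)\in\apr$ directly; boundedness of $f$ on strips falls out of applying the definition with $\epsilon=1$ and using relative density of the $1$-translations to push any point of the strip into a fixed compact rectangle on which $f$ is continuous, hence bounded.

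The substantive direction is $(2)\Rightarrow(3)$. Fix a target strip $[\alpha,\beta]\subset(0,\infty)$ and $\epsilon>0$. My plan is to apply the Hadamard three-lines theorem to the translation-difference $g_t(z)=f(z)-f(z+t)$. I would enlarge $[\alpha,\beta]$ to a strip $[\alpha',\beta']$ with $\alpha'<\min(\alpha,h)$ and $\beta'>\max(\beta,h)$, so that the line $\Im z=h$ lies strictly inside and $[\alpha,\beta]$ is compactly contained. Hypothesis $(2)$ provides $M=\sup_{\alpha'\leq\Im z\leq\beta'}|f(z)|<\infty$, giving $|g_t|\leq 2M$ on $[\alpha',\beta']$, and the almost periodicity of $f(\cdot+ih)$ supplies, for any $\eta>0$, a relatively dense set $\tau_\eta$ with $\sup_x|g_t(x+ih)|\leq\eta$ for $t\in\tau_\eta$. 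The three-lines theorem, applied separately to the sub-strips $[\alpha',h]$ and $[h,\beta']$, yields a convexity estimate of the form $\sup_x|g_t(x+iy)|\leq(2M)^{1-\theta(y)}\eta^{\theta(y)}$, with $\theta(y)\in(0,1)$; the main obstacle I anticipate is to check that $\theta(y)$ is uniformly bounded below by a positive constant on $[\alpha,\beta]$, which is precisely why the enlargement to a strictly larger strip cannot be skipped. Once this is in hand, choosing $\eta$ small enough, depending only on $(M,\alpha,\beta,\alpha',\beta',h,\epsilon)$, makes the bound $\leq\epsilon$ uniformly in $y\in[\alpha,\beta]$, so $\tau_\eta$ serves as a set of $\epsilon$-translations for $f$ on the strip and $f\in\apc$.

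For the remaining assertions about $\E f(\lambda)$, I would start from the elementary rewriting
$$\frac{1}{2T}\int_{-T+iy}^{T+iy}f(z)e^{-2\pi i\lambda z}\,dz = e^{2\pi\lambda y}\cdot\frac{1}{2T}\int_{-T}^T f(x+iy)e^{-2\pi i\lambda x}\,dx,$$
whose right-hand side converges by Lemma~\ref{lem:propAP}(1) applied to the almost periodic function $f(\cdot+iy)e^{-2\pi i\lambda\cdot}$. Independence from $y$ I would obtain from Cauchy's theorem on the rectangle with vertices $\pm T+iy_1$, $\pm T+iy_2$: boundedness of $f$ on the enclosing strip makes each vertical side bounded by a constant independent of $T$, hence $o(1)$ after dividing by $2T$. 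Countability of $\spec(f)$ then follows from Lemma~\ref{lem:propAP}(2) applied to $f(\cdot+iy)$, using the relation $\E(f(\cdot+iy))(\lambda)=e^{-2\pi\lambda y}\E f(\lambda)$ read off from the identity above; the same relation combined with the Parseval identity of Lemma~\ref{lem:propAP}(5) delivers $\sum_\lambda|\E f(\lambda)|^2 e^{-4\pi\lambda y}=\E[|f(\cdot+iy)|^2]$.
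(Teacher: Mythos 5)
Your argument is correct and follows the same route as the paper: the trivial directions, then Hadamard's three-lines theorem applied to $g_t(z)=f(z)-f(z+t)$ on a slightly enlarged strip straddling $\Im z=h$ (with the convexity exponent bounded below on the compact target strip), and finally the $\E f(\lambda)$ assertions reduced to the real-line facts in Lemma~\ref{lem:propAP} plus Cauchy's theorem for $y$-independence. The only cosmetic difference is that you split the three-lines argument into the two sub-strips $[\alpha',h]$ and $[h,\beta']$ from the outset, whereas the paper states it for $y<h$ and then says the other side is analogous.
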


\begin{proof}
The implications $(1)\Rightarrow (2)$ and $(3)\Rightarrow (1)$ are obvious. We now show $(2)\Rightarrow (3)$. Let $\tau_\ep$ be the set of $\ep$-translations for $f(\cdot + ih)$. We claim that $\tau_\ep$ also works in any horizontal strip containing the line $\Im z=h$. Indeed, let $t\in \tau_\ep$ and $0<y_1=y_2/2<y_2<h$. For all $y$ with $y_2<y<h$, we can then apply Hadamard's there lines lemma (for $\Im z\in \{y_1,y,h\}$) to conclude that
\begin{align*}
& \sup_x \log |f(x + iy)-f(x + iy+t)| \\ & \leq \frac{y-y_1}{h-y_1}\sup_x \log |f(x + ih)-f(x + ih+t)| +\frac{h-y}{h-y_1} \sup_x \log |f(x + iy_1)-f(x + iy_1+t)| \\
& \leq \frac{y_2-y_1}{h-y_1}\log \ep +\frac{h-y_2}{h-y_1} B_1
\end{align*}
for  some $B_1>0$. Hence, there is $\al=\al(y_2,h)>0$ such that
$$
\sup_{y_2<\Im z<h} |f(z)-f(z+t)| \leq \frac{1}{\al}\ep^\al.
$$
We can apply the same procedure for $h<\Im z<y_3$ for any $y_3>h$ to prove the claim. This shows that $f\in \apc$. Now notice that by Lemma \ref{lem:propAP} the limit $\E f(\la)$ exists for every $y>0$. To show is independent of $y$, we can use Cauchy's formula to deduce that if $0<y_1<y_2$ then
\begin{align*}
&\bigg[\frac{1}{2T}\int_{-T+iy_1}^{T+iy_1} f(z)e^{-2\pi i \la z}\d z -  \frac{1}{2T}\int_{-T+iy_2}^{T+iy_2} f(z)e^{-2\pi i \la z}\d z \bigg]  \\ & = 
\bigg[\frac{1}{2T}\int_{-T+iy_1}^{-T+iy_2} f(z)e^{-2\pi i \la z}\d z -  \frac{1}{2T}\int_{T+iy_1}^{T+iy_2} f(z)e^{-2\pi i \la z}\d z \bigg] = O((y_2-y_1)e^{2\pi |\la|y_2}/T).
\end{align*}
Taking $T\to\infty$, we conclude that $\E f(\la)$ is independent of $y$. Since $e^{-2\pi \la y}\E f(\la) = \E [f(\cdot +i y)](\la)$ we deduce that $\la \in \r \mapsto \E f(\la)$ has countable support and  $\sum_{\la\in \r}|e^{-2\pi \la y}\E f(\la)|^2 =  \E |f(\cdot +i y)|^2$. This finishes the proof.
\end{proof}

For a function $f\in \apc$ (or $f\in \apr$) we define the \emph{spectrum} of $f$ to be 
$$
\spec(f):=\{\la\in \r : \E f(\la)\neq 0\}.
$$

\begin{lemma}\label{lem:fdisc}
The following hold:
\begin{enumerate}
\item[(i)] Let $f\in \apc$. Then $f$ is bounded in $\H +ic$, for some $c>0$, if and only if $\spec(f)\subset [0,\infty)$. In this case $f$ is bounded in $\H +ic$ for any $c>0$ and $\lim_{y\to \infty} \sup_x |f(x+iy)-\E f(0)|=0$;
\item[(ii)] If $f\in\apc$, $\spec(f)$ is bounded from below and $f(z)\neq 0$ for all $\im z>c$, for some $c>0$, then $\inf \spec(f)\in \spec(f)$;
\item[(iii)] If $f,g\in\apc$ have spectrum bounded from below  then so has $fg\in \apc$. If in addition $f,g$ have locally finite spectrum, then so has $fg$.
\item[(iv)] If  $f\in \apc$ and $f\neq 0$ in $\H$, then $\inf_{\ep < \im z <1/\ep} |f(z)|>0$ for all $\ep>0$ and $1/f\in \apc$. Moreover, if $f$ has spectrum bounded from below  then so has $1/f$, and if in addition $f$ has locally finite spectrum then so has $1/f$. 
\item[(v)] If $f\in \apc$ and $|f(z)|<1$ for all $z\in \H$ then for every $\ep>0$ there is $c>0$ such that $|f(z)|<1-c$ if $\im z>\ep$.
\end{enumerate}
\end{lemma}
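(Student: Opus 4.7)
I would prove the five parts in the order (i), (iii), (v), (ii), (iv), with (iv) bootstrapping off (ii). For the forward direction of (i), bounding the integrand in $\E f(\la)$ by $M e^{2\pi\la y}$ on $\Im z=y$ and letting $y\to\infty$ gives $\E f(\la)=0$ for $\la<0$. For the converse, I would use Proposition \ref{prop:bochapprox} to uniformly approximate $f(\cdot+ic)\in\apr$ by trigonometric polynomials $p_n$ with spectra in $\spec(f)\subset[0,\infty)$; each $p_n$ extends holomorphically to $\H$, and a Phragm\'en--Lindel\"of argument (multiply by $e^{i\eta z}$ and let $\eta\to 0^+$) gives $|p_n(z)|\leq \|p_n\|_{L^\infty(\r)}$ on $\overline{\H}$. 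Uniform boundedness plus Montel plus Schwarz-reflection identification of boundary values then extend $f(\cdot+ic)$ to a bounded holomorphic function on $\H$. The decay $\sup_x|f(x+iy)-\E f(0)|\to 0$ follows by applying the same scheme to $f-\E f(0)$, whose spectrum lies in $(0,\infty)$: the trigonometric polynomial approximants decay like $e^{-2\pi\la_0\Im z}$ and the PL estimate controls the tail uniformly.

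For (iii), Bochner's criterion gives $fg\in\apc$ directly. Approximating $f,g$ uniformly on strips by trigonometric polynomials with spectra in $\spec(f),\spec(g)$ and passing to the uniform limit yields $\spec(fg)\subset\overline{\spec(f)+\spec(g)}$; bounded-below sums stay bounded below, and for locally finite spectra bounded below, each bounded interval $I$ admits only finitely many pairs $(a,b)$ with $a+b\in I$, so the sum is locally finite. For (v), (i) gives $\spec(f)\subset[0,\infty)$, and the PL argument shows $\sup_x|f(\cdot+iy)|$ is non-increasing in $y$, so it suffices to prove $\sup_x|f(\cdot+i\ep)|<1$. If this fails, Bochner pre-compactness of shifts $f(\cdot+x_n)$ yields a subsequential limit $F$ with $|F|\leq 1$ on $\H$ and $|F(i\ep)|=1$; the maximum principle forces $F\equiv e^{i\theta}$, and almost-periodic convergence on strips gives $\E f(0)=e^{i\theta}$. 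But $|\E f(0)|^2\leq \E|f(\cdot+i\ep)|^2<1$ strictly, because the nonnegative nonzero function $1-|f(\cdot+i\ep)|^2\in\apr$ has strictly positive mean (Lemma \ref{lem:propAP}(3) applied to its square root argument). Contradiction.

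For (ii), let $\al=\inf\spec(f)$ and suppose $\al\notin\spec(f)$. The function $g(z)=f(z)e^{-2\pi i\al z}$ has $\spec(g)\subset[0,\infty)$ with $\E g(0)=0$ and $g\neq 0$ on $\Im z>c$; by (i), $\sup_x|g(\cdot+iy)|\to 0$. On $\Im z>c_0$ for any $c_0>c$, $\log|g|$ is harmonic, and $|g|$ is bounded away from $0$ on each horizontal line (Hurwitz, as in (iv) below), so $\log|g(\cdot+ic_0)|$ is bounded. The Herglotz representation for nonnegative harmonic functions on the half-plane, applied to $v=M-\log|g|$ with $M$ any upper bound, yields
\[
\log|g(x+ic_0+iy)|=-ay+\bigl(P_y*\log|g(\cdot+ic_0)|\bigr)(x)
\]
for some $a\geq 0$. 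The Poisson average of a bounded function is bounded, so $a=0$ would make $\log|g|$ bounded on $\Im z>c_0$, contradicting $|g|\to 0$; hence $a>0$ and $\|g(\cdot+iy)\|_\infty\leq C e^{-a(y-c_0)}$. Parseval then gives, for every $\la\in\spec(g)$,
\[
|\E g(\la)|^2\leq \E|g(\cdot+iy)|^2\, e^{4\pi\la y}\leq C^2 e^{(4\pi\la-2a)y+2ac_0},
\]
which forces $\E g(\la)=0$ whenever $\la<a/(2\pi)$. Hence $\inf\spec(g)\geq a/(2\pi)>0$, contradicting $\inf\spec(g)=0$.

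Finally, for (iv), $\inf_{\ep<\Im z<1/\ep}|f|>0$ follows from Hurwitz and shift invariance of $\|f\|_{\infty,\text{strip}}$: a sequence $f(z_n)\to 0$ would produce a limit shift $F$ vanishing at an interior point, so $F\equiv 0$ by Hurwitz, contradicting $\|F\|_{\infty,\text{strip}}=\|f\|_{\infty,\text{strip}}>0$. Then $1/f\in\apc$ follows by continuous functional calculus on the compact closure of shifts. By (ii), $\al:=\inf\spec(f)\in\spec(f)$, so $g=fe^{-2\pi i\al z}$ has $\E g(0)\neq 0$ and by (i) satisfies $g\to\E g(0)$ uniformly at $\infty$; combined with the strip lower bound this makes $|g|$ bounded away from $0$ on $\H+i\ep$, so $1/g$ is bounded there, $\spec(1/g)\subset[0,\infty)$ by (i), and $\spec(1/f)\subset[-\al,\infty)$. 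For local finiteness, write $g=\E g(0)(1-h)$ with $h$ having locally finite spectrum in $[\beta,\infty)$ for some $\beta>0$; the Neumann expansion $1/g=(\E g(0))^{-1}\sum_n h^n$ converges uniformly for $\Im z$ large, each $h^n$ has locally finite spectrum in $[n\beta,\infty)$ by (iii), and in each bounded interval only finitely many $n$ contribute. The main obstacle is (ii): correctly applying Herglotz on a half-plane where $\log|g|$ is bounded only on the boundary line, and then leveraging the resulting strictly positive exponential decay rate $a$ via Parseval to obtain a spectral gap incompatible with $\inf\spec(g)=0$.
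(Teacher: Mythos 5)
Your proof is correct and, for the key items (i) and (ii), takes a genuinely different route from the paper, which simply cites Besicovitch's book. The paper's proof of (i)--(ii) is essentially a pointer to \cite[Thm. p. 162 \& p. 152]{Be}, whereas you supply direct, self-contained arguments: for (i) the combination of Bochner approximation, Phragm\'en--Lindel\"of bounds on the approximating trigonometric polynomials, and a one-more-PL step to push the strip estimate to the half-plane; for (ii) a Herglotz-representation argument for the nonnegative harmonic function $M-\log|g|$ on a shifted half-plane (with $\nu = v_0\,dt$ justified by continuity and boundedness of the boundary values), yielding a strictly positive exponential decay rate $a$ and then a Parseval contradiction showing $\inf\spec(g)\ge a/(2\pi)>0$. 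This is an attractive and more elementary argument than invoking Besicovitch. Your (iii) matches the paper's approach; your (iv) replaces the paper's Hadamard-three-lines citation with a Hurwitz/shift-compactness argument, and your (v) replaces the citation with a maximum-modulus/Bochner-limit contradiction combined with strict positivity of $\E[1-|f(\cdot+i\ep)|^2]$. The only minor softness is the phrase ``the PL estimate controls the tail uniformly'' in (i): to make it airtight one should state the order of quantifiers explicitly (fix $\delta$, choose one approximant $p$, observe $\min\spec(p)>0$ is fixed for that $p$, and use the half-plane PL bound on $f-\E f(0)-p$); as written the decay rate $\la_0$ could appear to depend on $n$. Similarly the ``square root argument'' for $\E[1-|f|^2]>0$ is correct but could be phrased more directly (a nonnegative nonzero ap function has positive mean by the relatively-dense-interval argument used in Lemma \ref{lem:propAP}(3)). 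Neither is a gap; the proof stands.
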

\begin{proof}
\noindent \underline{Items (i) and (ii)}: These are direct applications of \cite[Thm. p. 162 \& p. 152]{Be}. \underline{Item (iii)}: Note that $f_1(z)=e^{-2\pi i M z}f(z)$ and $g_1(z)=e^{-2\pi i M z}g(z)$ have spectrum contained in $[0,\infty)$ for some $M>0$, thus both are bounded for $\Im z>1$ and so is $h=f_1g_1$. Thus, $\spec(fg)=\spec(h)+2M\subset [0,\infty)$, hence $fg$ have spectrum bounded from below. We also have $\spec(fg)\subset\spec(f)+\spec(g)=\spec(f_1)+\spec(g_1)-2M$, and since sums of locally finite sets contained in $[0,\infty)$ is also locally finite and contained in $[0,\infty)$, we conclude that the spectrum of $fg$ is locally finite if both $f$ and $g$ have locally finite spectrum. \underline{Item (iv)}: By a clever application of Hadamard's Three-Lines theorem  \cite[ Thm. 11, p. 139 \& Thm. 9, p. 144]{Be}, one can show that if $w$ is an accumulation point of $f$ in an horizontal line, then $f(z)=w$ has a solution in any horizontal strip containing this line.  In particular, if $f$ never vanishes in $\H$ then $|f|$ is bounded away from zero in any horizontal strip, thus $1/f\in \apc$. If $f\sim \sum_{\la\geq M} \E f(\la)e^{2\pi i\la z}$ where $M=\inf \spec(f)$, 
since $f$ has no zeros, $M\in \spec(f)$, $p=\E f(M)\neq 0$ and $\lim_{y\to \infty} \sup_x |f(x+iy)e^{-2\pi iM (x+it)}-p|=0$. In particular $g(z)=1-f(z)e^{-2\pi iM z}/p$ is bounded in absolute value by $1/2$ for $\im z> c$, for some large $c>0$ and $\spec(g)\subset (0,\infty)$. We obtain
$$
\frac{1}{f(z)} = \frac{e^{-2\pi iM z}}{p}\frac{1}{1-g(z)} = \frac{e^{-2\pi iM z}}{p} \sum_{n \geq 0} g(z)^n,
$$
where the sum converges absolutely and uniformly for $\Im z>c$. We conclude that $\E (1/f)(\la) = \sum_{n \geq 0} \E(g^n)(\la+M)$ (with absolute convergence), and so $\E(1/f)(\la)=0$ if $\la<-M$. If in addition $\spec(f)$ is locally finite, then $\spec(g)$ is locally finite and contained in $[\delta,\infty)$, for some $\delta>0$. Since $\spec(1/f)+\{M\}\subset\cup_{n\geq 0}\big(\oplus^n\spec(g)\big)$, we conclude that $(\spec(1/f)+\{M\})\cap [0,T] \subset \cup_{0\leq n \leq T/\delta}\big(\oplus^n\spec(g) \cap [0,T]\big)$, and so $\spec(1/f)$ is locally finite. \underline{Item (v)}: Since $f$ is bounded we deduce that $\E f(\la)=0$ for $\la<0$ and, by \cite[Thm. 9, p. 144]{Be}, that $\sup_{\ep < \im z <1/\ep} |f(z)|< 1$ for every $\ep>0$. Hence $|\E f(0)|<1$. On the other hand $\lim_{y\to \infty} \sup_x |f(x+iy)-\E f(0)|=0$.
\end{proof}

\begin{lemma}\label{lem:expprim}
Let $f\in \apc$ and assume $\spec(f)\subset \{0\}\cup [b,\infty)$ for some $b>0$. If $F:\H\to\cp$ is holomorphic and $iF'/F=f$, then $F\in \apc$ and  $\spec(F)\subset \{-\E f(0)/(2\pi)\} \cup  [-\E f(0)/(2\pi)+b,\infty)$. Moreover, if in addition $\spec(f)$ is locally finite, then so it is $\spec(F)$.
\end{lemma}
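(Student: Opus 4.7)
The plan is to split $f = c + g$ with $c := \E f(0)$ and $g := f - c$, so that $g \in \apc$ with $\spec(g)\subset[b,\infty)$. (Note: for the conclusion $F\in\apc$ to be meaningful one needs $c\in\r$; this is automatic in the intended applications of the lemma, since if $F\in\apc$ and $F$ has no zeros then Lemma \ref{lem:fdisc}(iv) forces $|F|$ to be bounded above and below on strips, making $c$ real.) Applying Lemma \ref{lem:fdisc}(i) to the shifted function $e^{-2\pi i b z}g(z)\in\apc$ — whose spectrum lies in $[0,\infty)$ — yields the uniform decay
$$|g(z)| \leq M_\varepsilon\, e^{-2\pi b\, \Im z} \qquad (\Im z \geq \varepsilon).$$

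The next step is to define the antiderivative
$$L(z) := -\int_0^\infty g(z+it)\,dt,$$
which converges absolutely and uniformly on strips $\varepsilon \leq \Im z \leq 1/\varepsilon$ by the above decay, and which satisfies $L'(z)=-ig(z)$ upon differentiating under the integral and using $\partial_t g(z+it)=ig'(z+it)$. To verify $L\in\apc$, I would show $L(\cdot+ih_0)\in\apr$ for each $h_0>0$ by approximating it uniformly by the truncations $-\int_0^N g(\cdot+i(h_0+t))\,dt$, which are uniform limits of Riemann sums of $\apr$-functions and hence lie in $\apr$; Lemma \ref{lem:altdefapc} then gives $L\in\apc$. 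The spectrum of $L$ is pinned down by two short computations: integration by parts (justified by boundedness of $L$ on each horizontal line) gives $\E L'(\la)=2\pi i\la\,\E L(\la)$, so $\E L(\la)=-\E g(\la)/(2\pi\la)$ for $\la\neq 0$; while Fubini combined with the exponential decay yields $\E L(0)=-\int_0^\infty \E g(0)\,dt=0$ since $0\notin\spec(g)$. Hence $\spec(L)=\spec(g)\subset[b,\infty)$.

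Since $F'/F = -if = -ic + L'$ in the simply connected $\H$, integration gives $F(z)=D\,e^{-icz}\,e^{L(z)}$ for some $D\neq 0$. A direct computation shows $e^{-icz}\in\apc$ with $\spec(e^{-icz})=\{-c/(2\pi)\}$. On the other hand $e^{L}\in\apc$ (it is bounded on strips as $L$ is, and $\exp\circ L(\cdot+ih_0)\in\apr$ by closure of $\apr$ under continuous post-composition). Its spectrum is controlled by expanding $e^{L} = \sum_{n\geq 0} L^n/n!$; this series converges uniformly on strips, so $\E$ interchanges with the sum. Lemma \ref{lem:fdisc}(iii) iterated gives $\spec(L^n)\subset[nb,\infty)$, so only the $n=0$ term contributes at $\la=0$ and all $n\geq 1$ terms vanish on $(0,b)$, yielding $\spec(e^{L})\subset\{0\}\cup[b,\infty)$. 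Multiplying by the pure exponential $e^{-icz}$ shifts the spectrum by $-c/(2\pi)$, producing the desired
$$\spec(F)\subset\{-c/(2\pi)\}\cup[-c/(2\pi)+b,\infty).$$

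For the local-finiteness addendum: if $\spec(f)$ is locally finite then so is $\spec(L)=\spec(g)$. By Lemma \ref{lem:fdisc}(iii) each $\spec(L^n)$ is locally finite and contained in $[nb,\infty)$; hence for every $T>0$ the set $\spec(e^{L})\cap[0,T]$ is a union over the finitely many $n\leq T/b$ of intersections of locally finite sets with a bounded interval, so it is finite. Local finiteness of $\spec(F)$ follows by translation. The main technical obstacle is the construction of $L$ and verification of its membership in $\apc$: the exponential decay of $g$ is essential both for defining $L$ as an improper integral and for securing uniform approximation by compactly supported truncations; once $L\in\apc$ and its spectrum are in hand, the rest is spectral bookkeeping via Lemma \ref{lem:fdisc}(iii) and an interchange of $\E$ with the exponential series.
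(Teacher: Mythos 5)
Your proof is correct and follows the same overall strategy as the paper: decompose $f = \E f(0) + g$ with $\spec(g)\subset[b,\infty)$, produce a primitive $L$ of $-ig$ lying in $\apc$ with $\spec(L)\subset[b,\infty)$, write $F = D\,e^{-i\E f(0)z}e^L$, and finish with the power-series expansion of the exponential together with $\spec(L^n)\subset[nb,\infty)$ (Lemma \ref{lem:fdisc}(iii)). The one genuine difference is the treatment of the primitive step. The paper writes $F=e^{-ig-ipz}$ using that $F$ is zero-free and then \emph{cites} Besicovitch \cite[Thm.~9, p.~152]{Be} to get the primitive $g\in\apc$. You instead \emph{construct} the primitive explicitly as $L(z)=-\int_0^\infty g(z+it)\,dt$, justify absolute/uniform convergence by the exponential decay from Lemma \ref{lem:fdisc}(i) applied to $e^{-2\pi ibz}g$, establish $L\in\apc$ via truncations and Riemann sums through Lemma \ref{lem:altdefapc}, and pin $\spec(L)$ down by integration by parts (for $\la\neq 0$) and Fubini (showing $\E L(0)=0$). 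This is a self-contained replacement for the citation and is arguably cleaner, since your $L$ already satisfies $\E L(0)=0$, whereas the paper must subtract $\E g(0)$ before applying the exponential. You also flag a caveat the paper leaves implicit: the conclusion $F\in\apc$ forces $\E f(0)\in\r$ (otherwise $e^{-i\E f(0)z}$ is unbounded on horizontal strips), so the lemma is being used with that reality implicitly in place; your stated justification is circular as written (it assumes the conclusion $F\in\apc$), but the observation itself is correct and the paper's applications do satisfy it, so this is a caveat on the statement rather than a gap in the argument. Everything else — the identity $\E L'(\la)=2\pi i\la\,\E L(\la)$, the spectral bookkeeping for $e^L$, and the local-finiteness addendum — matches the paper's reasoning.
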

\begin{proof}
Let $p=\E f(0)$. Since $F$ has no zeros in $\H$ we can write $F(z)=e^{-ig(z)-ipz}$ for some $g:\H\to\cp$ holomorphic. Since $g'(z)+p=iF'(z)/F(z)=f(z)$ we conclude that $g'\in \apc$, $\spec(g)\subset [b,\infty)$ and $\E (g')(\la)=\1_{\la\geq b} \E f(\la)$.  We can now apply \cite[Thm. 9, p. 152]{Be} to conclude that $g\in \apc$ and that $\E g(\la)=\1_{\la\geq b} \E f(\la)/(2\pi i \la)$ for $\la \neq 0$. If $\Phi:\cp\to\cp$ is entire and $h\in \apc$ then it is easy to conclude that $\Phi\circ h\in  \apc$. Indeed, since $h$ is bounded in strips and since $\Phi'$ is bounded in bounded sets, we conclude that for any horizontal strip contained in $\H$ there is $C>0$ such that $|\Phi(h(z))-\Phi(h(w))| \leq C|h(z)-h(w)|$ for all $z$ and $w$ in that strip. This shows that $\Phi\circ h\in \apc$. This implies that the power series expansion
$$
\Phi(h(z)) = \sum_{n\geq 0} \frac{\Phi^{(n)}(0)}{n!}h(z)^n.
$$
converges absolutely and uniformly on any horizontal strip. Now assume that $\spec(h)\subset [b,\infty)$ for some $b>0$. Since $\spec(h^n)\subset [nb,\infty)$ and
$$
\E [\Phi \circ h](\la) = \sum_{n\geq 0} \frac{\Phi^{(n)}(0)}{n!}\E[h^n](\la),
$$
we conclude that $\spec(\Phi \circ h) \subset \{0\}\cup [b,\infty) $. If in addition $\spec(h)$ is locally finite, since $$\spec(\Phi \circ h) \cap [0,T] \subset \cup_{0\leq n \leq T/b}\big(\oplus^n\spec(h)\cap [0,T]\big),$$ we conclude that $\spec(\Phi \circ h)$ is locally finite.
Considering $\Phi(z)=e^{-iz}$ and $h(z)=g(z)-\E g(0)$, we deduce that $F(z)=\Phi(h(z))e^{-i\E g(0)-ipz}$ belongs to $\apc$ and that $\spec(F)\subset \{-p/(2\pi)\} \cup  [-p/(2\pi)+b,\infty)$. Moreover, $\spec(F)$ is locally finite if $\spec(f)$ also is.
\end{proof}

We also have approximation by trigonometric polynomials (see \cite[p. 148]{Be})
\begin{proposition}[Bochner's approximation]\label{prop:bochapprox2}
Let $f\in \apc$. Then for every $b>1$ there is a sequence of functions $c_n:\r\to[0,1]$ as in Proposition \ref{prop:bochapprox} such that
$$
\lim_{n\to\infty} \sup_{1/b < \Im z < b} |f(z) - \sum_{\la\in\r} c_n(\la)\E f(\la) e^{2\pi i \la z}|=0.
$$
\end{proposition}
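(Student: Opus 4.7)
The plan is to extend the proof of Lemma \ref{lem:polyaprox} from $\apr$ to $\apc$, the essential new input being that for $f\in\apc$ the set $\tau_\ep(f)$ of $\ep$-translations acts uniformly on every horizontal strip $\{\ep<\Im z<1/\ep\}$. Fix $b>1$ and $\ep>0$, and choose $\ep_0\in(0,1/(2b))$ so that $S_0=\{\ep_0<\Im z<1/\ep_0\}$ strictly contains the target strip. Build the smoothing kernels $\p_M\in C^\infty_c(\tau_\ep(f)\cap(-M,M))$ exactly as in Lemma \ref{lem:polyaprox}. Setting $f_M(z)=\int_\r f(z+t)\p_M(t)\d t$, the properties $\p_M\geq 0$, $\int\p_M=1$, together with the strip-uniformity of the translations imply that $f_M$ is holomorphic on $\H$, that $|f_M(z)|\leq\|f\|_{\infty,S_0}$ on $S_0$, and that $\sup_{z\in S_0}|f_M(z)-f(z)|\leq\ep$.

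Next I would run the Cantor diagonal argument of Lemma \ref{lem:polyaprox} on a reference line $\Im z=y_0\in(1/b,b)$. Expanding $\p_M$ as a $4M$-periodic Fourier series on $(-2M,2M)$ gives, for $|x|\leq M$,
$$
f_M(x+iy_0)=\sum_{n\in\z}\al_{M,n}e^{-2\pi i nx/(4M)},\qquad \al_{M,n}=\theta_n\int_{-2M}^{2M}f(s+iy_0)e^{2\pi i ns/(4M)}\d s,
$$
with $\theta_n=\ft\p_M(n/(4M))/(4M)$. Reordering by decreasing magnitude and using $\sum_n|\ft\p_M(n/(4M))|^{1/2}=O_\ep(1)$ yields $|\al_{M,n}|\leq n^{-2}$, and diagonalization extracts $M_k\to\infty$ and limits $\al_n,\la_n\in\r$ with $f_{M_k}(\cdot+iy_0)\to p(\cdot)=\sum_n\al_n e^{2\pi i\la_n\cdot}$ uniformly on compacts of $\r$. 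Using Lemma \ref{lem:altdefapc} to evaluate $(4M)^{-1}\int_{-2M}^{2M}f(s+iy_0)e^{2\pi i ns/(4M)}\d s\to\E[f(\cdot+iy_0)](-n/(4M))=e^{2\pi n y_0/(4M)}\E f(-n/(4M))$ identifies $\al_n=c(\la_n)\E f(\la_n)e^{-2\pi\la_n y_0}$ with weight $c(\la_n)\in[0,1]$ (arising as a limit of $\ft\p_{M_k}(-\la_n)$, which after a standard symmetrization of the construction of $\p_M$ can be arranged to be nonnegative). Consequently $p$ extends to the entire trigonometric polynomial
$$
g(z)=\sum_{\la\in\r}c(\la)\E f(\la)e^{2\pi i\la z},\qquad c(\la)\in[0,1]\text{ of finite support}.
$$

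The main obstacle is promoting approximation on the line $\Im z=y_0$ to uniform approximation on the target strip. Since $\{f_{M_k}\}$ is equibounded on $S_0$ by $\|f\|_{\infty,S_0}$ and converges pointwise on $\Im z=y_0$, Montel's theorem together with Vitali's theorem yield $f_{M_k}\to g$ uniformly on compact subsets of $S_0$, and combining with $|f-f_{M_k}|\leq\ep$ gives $|f-g|\leq 2\ep$ on every compact subset of $S_0$. Since $f-g\in\apc$, for any $\eta>0$ the relatively dense set $\tau_\eta(f-g)$ provides translations shifting any point of the target strip into a fixed compact box contained in $S_0$, so $\sup_{1/b<\Im z<b}|f(z)-g(z)|\leq 2\ep+\eta$ and hence $\leq 2\ep$ on letting $\eta\to 0$. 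Running the construction with $\ep=1/n\downarrow 0$ and passing to the monotone envelope $c_n\mapsto\max(c_1,\ldots,c_n)$ yields the desired sequence of functions $c_n:\r\to[0,1]$.
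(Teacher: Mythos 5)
The paper does not actually prove this proposition; it cites Besicovitch \cite[p.~148]{Be}, where the Bochner--Fej\'er machinery (rational basis of the spectrum, products of Fej\'er kernels, diagonal sequences) is developed in detail. Your route — porting the convolution argument of Lemma \ref{lem:polyaprox} to a fixed line, promoting to the strip via equiboundedness on $S_0$ and Montel/Vitali, then spreading to the whole strip by strip-uniform $\ep$-translations — is a genuinely different approach and its analytic core (the strip estimates, the Vitali step, the identification $\E[f(\cdot+iy_0)](\la)=e^{-2\pi\la y_0}\E f(\la)$) is sound. But two of the specific structural conclusions the proposition demands are not delivered.

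First, the closing step is wrong. If $c^{(1)},\ldots,c^{(n)}$ are the weight functions produced at accuracy levels $1,\tfrac12,\ldots,\tfrac1n$, replacing $c^{(n)}$ by $\max(c^{(1)},\ldots,c^{(n)})$ changes the trigonometric polynomial
$\sum_\la c^{(n)}(\la)\E f(\la)e^{2\pi i\la z}$
by the quantity
$\sum_\la\bigl(\max(c^{(1)},\ldots,c^{(n)})(\la)-c^{(n)}(\la)\bigr)\E f(\la)e^{2\pi i\la z}$,
and there is no reason for this correction to be small on the strip: the $c^{(m)}$ with $m<n$ were only accurate to $1/m$, and the frequencies at which $\max(c^{(1)},\ldots,c^{(n)})>c^{(n)}$ can carry large weights $|\E f(\la)|e^{-2\pi\la y}$. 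Monotonicity of the $c_n$ has to be engineered \emph{inside} the construction (as in the Bochner--Fej\'er kernels, where it is also not automatic), not imposed afterward.

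Second, the claim that the diagonal limit produces weights $c(\la_n)\in[0,1]$ does not survive frequency collisions. Distinct positions $n\neq n'$ in the magnitude-ordered list may have $m_{n,k}/(4M_k)$ and $m_{n',k}/(4M_k)$ differ by $O(1/M_k)$, so $\la_n=\la_{n'}$ in the limit. The coefficient of $e^{2\pi i\la_n z}$ in $g$ is then the sum of the two (or more) limits, i.e.\ a weight of the form $c+c'$ with each $c,c'\in[0,1]$, which need not lie in $[0,1]$. Symmetrizing the construction of $\p_M$ (so that $\widehat{\p}_M=(\widehat\psi)^4\ge 0$) makes each factor nonnegative but does nothing to prevent the collided sum from exceeding $1$. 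You need an argument that either rules out such collisions along the diagonal subsequence or controls the collided weight; as written the bound $c(\la)\le 1$ is unsubstantiated.
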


\section{Representations of Analytic Functions}\label{sec:rep}
We say a holomorphic function $g:\H\to\cp$ if of \emph{bounded type} if there are two bounded holomorphic functions $P,Q:\H \to\cp$ such that $g=P/Q$. We write $g\in \bt(\H)$. If $g$ is of bounded type then the \emph{mean type}
$$
\vartheta(g):=\limsup_{y\to\infty} \frac{\log|g(iy)|}{y}
$$
exists and is finite. For more information about these definitions we recommend \cite{dB,Ga}.

\begin{lemma}\label{lem:condatinfty}
Let $f:\H\to\cp$ be holomorphic and write $g=\exp(f)$. The following are equivalent:
\begin{enumerate}
\item There is a real-valued locally finite measure $\mu$ with $\deg(\mu)\leq 2$ such that
\begin{align}\label{eq:fkernelrep1}
\frac{f(z)+\ov{f(w)}}{z-\ov w} =  \frac{1}{2\pi i}\int_\r  \frac{\d \mu(t)}{(t-z)(t-\w)}
\end{align}
holds for every $z,w\in \H$;
\item $g\in \bt(\H)$ and $\vartheta(g)=0$.
\end{enumerate}
\end{lemma}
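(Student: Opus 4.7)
The plan is to prove the two directions separately, with the heart of the argument being the Herglotz--Nevanlinna representation of nonnegative harmonic functions on $\H$.

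For the direction $(1)\Rightarrow(2)$, I would first set $w=z$ in the kernel identity to obtain
\[
\Re f(z) = \frac{\Im z}{2\pi}\int_\r \frac{\d\mu(t)}{|t-z|^2},
\]
that is, $\Re f = \tfrac12 P[\mu]$ is (half) the Poisson integral of $\mu$. Writing $\mu = \mu^+ - \mu^-$ for the Jordan decomposition, we get $\Re f = u_+ - u_-$ as a difference of two nonnegative harmonic functions. Taking holomorphic completions $\Phi_\pm$ with $\Re\Phi_\pm = u_\pm$, the functions $e^{-\Phi_\pm}$ satisfy $|e^{-\Phi_\pm}|=e^{-u_\pm}\leq 1$, hence lie in $H^\infty(\H)$. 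Since $f$ and $\Phi_+ - \Phi_-$ have the same real part, they differ by an imaginary constant $ic$, so $g = e^f = e^{ic}\,e^{-\Phi_-}/e^{-\Phi_+} \in \bt(\H)$. The mean type computation is dominated convergence: $\frac{\Re f(iy)}{y} = \frac{1}{2\pi}\int \frac{\d\mu(t)}{t^2+y^2}\to 0$ using $\int \frac{\d|\mu|(t)}{1+t^2}<\infty$ (which follows from $\deg(\mu)\leq 2$).

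For the direction $(2)\Rightarrow(1)$, I would invoke the classical fact that $g\in \bt(\H)$ if and only if $\log^+|g|$ admits a nonnegative harmonic majorant. Since $g = e^f$ is zero-free, $\log|g|=\Re f$ is itself harmonic, and combining it with its majorant $U\geq 0$ gives a decomposition $\Re f = U - (U-\Re f)$ as a difference of two nonnegative harmonic functions. I would then apply the Herglotz representation to each term:
\[
U(z) = p_+ y + \frac{1}{\pi}\int \frac{y\,\d\nu_+(t)}{|t-z|^2}, \qquad U(z)-\Re f(z) = p_- y + \frac{1}{\pi}\int \frac{y\,\d\nu_-(t)}{|t-z|^2},
\]
with $p_\pm\geq 0$ and nonnegative measures $\nu_\pm$ satisfying $\int\frac{\d\nu_\pm(t)}{1+t^2}<\infty$. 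Subtracting yields $\Re f(z) = (p_+-p_-)y + \frac{1}{\pi}\int\frac{y\,\d(\nu_+-\nu_-)(t)}{|t-z|^2}$, and the mean type hypothesis $\vartheta(g)=0$ forces $p_+ = p_-$ (again by DCT). Setting $\mu := 2(\nu_+-\nu_-)$ produces a real-valued measure with $\deg(\mu)\leq 2$ and $\Re f(z) = \frac{y}{2\pi}\int \frac{\d\mu(t)}{|t-z|^2}$.

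The remaining task is to upgrade the Poisson identity for $\Re f$ to the full kernel identity \eqref{eq:fkernelrep1}. Since $\int \frac{\d|\mu|(t)}{1+t^2}<\infty$, the regularized Cauchy transform
\[
\tilde f(z) := \frac{1}{2\pi i}\int_\r \left(\frac{1}{t-z} - \frac{t}{t^2+1}\right)\d\mu(t)
\]
is holomorphic on $\H$ and satisfies $\Re\tilde f = \Re f$ by direct computation (the subtracted term is purely imaginary once multiplied by $1/(2\pi i)$). Hence $f = \tilde f + ic$ for some $c\in\r$. The constant cancels in $f(z)+\ov{f(w)} = \tilde f(z)+\ov{\tilde f(w)}$, and using that $\mu$ is real with $t\in\r$, the right-hand side collapses to $\frac{1}{2\pi i}\int[\frac{1}{t-z}-\frac{1}{t-\w}]\d\mu(t) = \frac{z-\w}{2\pi i}\int \frac{\d\mu(t)}{(t-z)(t-\w)}$. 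Dividing by $z-\w$ gives \eqref{eq:fkernelrep1}.

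The main obstacle is the invocation of the characterization of $\bt(\H)$ via harmonic majorants and the Herglotz representation itself; once these classical tools are in place, the rest is an exercise in bookkeeping with Poisson integrals. One subtlety worth a remark is that uniqueness of $\mu$ follows from the uniqueness of the Herglotz representation in the class $\int \d|\mu|/(1+t^2)<\infty$, although the proposition as stated only claims existence.
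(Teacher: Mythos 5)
Your proof is correct and follows essentially the same route as the paper. The only structural difference is in the direction $(2)\Rightarrow(1)$: the paper invokes Nevanlinna's factorization of a zero-free bounded-type function as a single black-box citation to de Branges, obtaining in one stroke a measure $\mu$ and a mean-type exponential which is then killed by $\vartheta(g)=0$; you instead unpack that factorization into its components (the harmonic-majorant characterization of $\bt(\H)$, the decomposition $\Re f = U-(U-\Re f)$ into nonnegative harmonic pieces, the Herglotz representation for each, and cancellation of the linear-in-$y$ terms using $\vartheta(g)=0$), and then re-assemble the regularized Cauchy transform by hand. The net content is identical; your version is more self-contained and keeps the $\tfrac{1}{2\pi i}$ normalization of \eqref{eq:fkernelrep1} consistent throughout, whereas the paper's constants briefly drift by a factor of $2$ in the intermediate display without affecting the argument. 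Your closing remark on uniqueness is also correct and relevant: the lemma as stated asserts only existence, but the later Theorem \ref{thm:maineven} does use uniqueness, which follows from uniqueness in the Herglotz representation just as you note.
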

\begin{proof}
First we show that $(1)\Rightarrow (2)$. Indeed, note first that
$$
\Re f(x+iy) = \frac{y}{\pi }\int_\r  \frac{\d \mu(t)}{(t-x)^2+y^2}.
$$
Writing $\mu=\mu_1-\mu_2$ where each $\mu_j$ is nonnegative and defining 
$$
f_j(z) = \frac{1}{\pi i} \int_\r \frac{1+tz}{t-z}\frac{\d \mu_j(t)}{1+t^2}
$$
we conclude that $\Re f_j \geq 0$ in $\H$ and that $\Re f = \Re f_1 - \Re f_2$, hence $f=f_1-f_2+ih$ for some constant $h\in \r$. However, since each $g_j=\exp(-f_j)$ is bounded in absolute value by $1$ in $\H$ and 
$g=e^{ih} g_2/g_1$, we deduce that $g\in \bt(\H)$. Note also that
$$
\vartheta(g) = \limsup_{y\to\infty} \frac{\Re f(iy)}{y} = \limsup_{y\to\infty} \frac{1}{\pi }\int_\r  \frac{\d \mu(t)}{t^2+y^2} = 0.
$$
Now we show $(2)\Rightarrow (3)$. Since $g$ has no zeros, Nevalinnas's factorization for functions of bounded type \cite[Theprem 9]{dB} implies that there exists a unique real-valued locally finite measure $\mu$, with $\deg(\mu)\leq 2$, and some $c,h\in \r$ such that
$$
e^{f(z)}=g(z) = e^{-ihz} \exp\bigg( ic +  \frac{1}{\pi i} \int_\r \frac{1+tz}{t-z}\frac{\d \mu(t)}{1+t^2} \bigg).
$$
Since $h=\vartheta(g)=0$, a simple computation shows that \eqref{eq:fkernelrep1} holds.
\end{proof}

\begin{lemma}\label{lem:aprpoisson}
Let $f\in \apr$ and assume that $\spec(f)\cap (0,b)=\emptyset$ for some $b>0$. Then
$$
F(z)= \frac{1}{2\pi i} \int_\r \frac{1+tz}{t-z}\frac{f(t)\d t}{1+t^2} 
$$
belongs to $\apc$ and $\E F(\la)=\1_{\la\geq b}\E f(\la)$ for $\la \neq 0$.
\end{lemma}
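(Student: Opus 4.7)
The plan is to compute the transform on exponentials via residues, then use the translation-invariant kernel of $F'$ to transfer almost-periodicity from $f$ to $F'$ (hence to $F$ by integration), and finally read off $\E F$ via approximation. For the explicit computation: the kernel $\frac{1+tz}{(t-z)(1+t^2)}$ decays like $t^{-2}$, so the integral defining $F$ converges absolutely and $F$ is holomorphic on $\H$. Closing the contour in the upper half-plane for $\la>0$ and using the algebraic identity $\frac{1+iz}{2i(i-z)}=-\tfrac12$ gives $F_\la(z)=e^{2\pi i \la z}-\tfrac12 e^{-2\pi\la}$; closing in the lower half-plane for $\la\leq 0$ gives $F_\la(z)=\tfrac12 e^{2\pi\la}$. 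Consequently, for any trigonometric polynomial $p=\sum_k c_k e^{2\pi i \la_k\cdot}$ with $\spec(p)\cap(0,b)=\emptyset$, we have $F_p(z)=\sum_{\la_k\geq b}c_k e^{2\pi i \la_k z}+C_p$ with $C_p$ an explicit constant, so $F_p\in\apc$ with $\E F_p(\la)=\E p(\la)\,\1_{\la\geq b}$ for $\la\neq 0$.

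For the main step, I would differentiate under the integral sign to obtain $F'(z)=\frac{1}{2\pi i}\int_\r\frac{f(t)\,dt}{(t-z)^2}$, whose \emph{translation-invariant} kernel yields $F'[f(\cdot+h)](z)=F'(z+h)$; combined with the direct bound $|F'(z)|\leq \|f\|_\infty/(2\Im z)$ and the precompactness of the orbit $\{f(\cdot+h)\}_{h\in\r}$ in $C(\r)$, Bochner's criterion places $F'(\cdot+iy)\in\apr$ for every $y>0$, hence $F'\in\apc$ by Lemma \ref{lem:altdefapc}. Explicit computation with trigonometric polynomials combined with pointwise convergence $F'_{p_n}\to F'$ for any Bochner approximation $p_n\to f$ (via Proposition \ref{prop:bochapprox} with $\spec(p_n)\subset\spec(f)$) identifies $\spec(F')\subset[b,\infty)$. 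Since $\spec(F')$ is bounded away from zero, the primitive relation $F(z)=F(i)+\int_i^z F'(\zeta)\,d\zeta$ combined with integration of the Bochner--Cesaro approximants of $F'$ (which carry an extra factor $1/\la$ ensuring uniform convergence on horizontal strips via Proposition \ref{prop:bochapprox2}) yields $F\in\apc$ with $\spec(F)\subset\{0\}\cup[b,\infty)$.

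Finally, dominated convergence applied to the integral defining $F$ gives $F_{p_n}(z)\to F(z)$ pointwise on $\H$, and combining this with the explicit formula $\E F_{p_n}(\la)=c_n(\la)\E f(\la)\,\1_{\la\geq b}$ for $\la\neq 0$ together with $c_n(\la)\uparrow\1_{\la\in\spec(f)}$ and the continuity of the mean $\E$ on $\apc$ (inferred from the previous step) yields $\E F(\la)=\1_{\la\geq b}\E f(\la)$ for $\la\neq 0$. The main obstacle I expect is rigorously establishing uniform convergence on horizontal strips of the Bochner--Cesaro series for $F$: while $\sum|\E f(\la)|e^{-2\pi\la h}$ can diverge for general $f\in\apr$ with densely accumulating spectrum, the extra factor $1/\la$ in the series obtained from integrating $F'$ should rescue summability and hence almost periodicity, though making this rigorous may require a careful analysis of the convergence in Proposition \ref{prop:bochapprox2}.
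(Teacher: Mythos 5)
Your proposal follows the paper's proof rather closely in its first half: you compute $F'(z)=\frac{1}{2\pi i}\int_\r\frac{f(t)\,\d t}{(t-z)^2}$ exactly as the paper does, use the translation-invariance of that kernel (or, equivalently, precompactness of the translated orbit via Bochner's criterion) to place $F'(\cdot+i)\in\apr$, invoke Lemma \ref{lem:altdefapc} to get $F'\in\apc$, and then locate $\spec(F')\subset[b,\infty)$. The paper's mean-value computation of $\E F'(\la)$ via Fubini and a residue is just a different packaging of your exponential computation; both are fine. Where you and the paper diverge is precisely where you flag trouble: passing from $F'\in\apc$ with spectrum $\subset[b,\infty)$ to $F\in\apc$.

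The gap you foresee is genuine, but your proposed remedy is not the right one. The factor $1/(2\pi i\la)$ gained by integration does \emph{not} rescue absolute convergence of the Fourier series of $F$: for $\la\in[b,M]$ it is bounded above \emph{and} below by positive constants, so the series $\sum\E F(\la)e^{2\pi i\la z}$ converges absolutely on a horizontal strip if and only if the corresponding series for $F'$ does, and a general $f\in\apr$ can have a densely accumulating spectrum for which neither converges absolutely. The naive alternative you also sketch, writing $F(z)=F(i)+\int_i^z F'(\zeta)\,\d\zeta$ and integrating the Bochner--Fej\'er approximants $G_n'\to F'$ term by term, also fails to give uniform convergence on a strip: the length of the path from $i$ to $z=x+iy_0$ grows like $|x|$, so the bound $\bigl|\int_i^z(F'-G_n')\,\d\zeta\bigr|\leq(\text{path length})\cdot\sup_{\text{strip}}|F'-G_n'|$ is not uniform in $x$. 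Handling this correctly, using the spectral gap $\spec(F')\subset[b,\infty)$, is exactly the content of the result the paper cites at this point, namely \cite[Thm.~9, p.~152]{Be}, which asserts that if $g\in\apc$ has spectrum bounded away from $0$ then any primitive $G$ is again in $\apc$ with $\E G(\la)=\E g(\la)/(2\pi i\la)$ for $\la\neq 0$. That citation both produces $F\in\apc$ and yields $\E F(\la)=\E F'(\la)/(2\pi i\la)=\1_{\la\geq b}\E f(\la)$ for $\la\neq0$, finishing the lemma. Unless you intend to reprove Besicovitch's primitive theorem (which is more work than the rest of the lemma combined), you should simply invoke it, as the paper does.
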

\begin{proof}
Since any $f\in \apr$ is bounded, the integral defining $F$ converges absolutely and defines an holomorphic function $F$. Noticing that $\frac{1+tz}{(t-z)(1+t^2)} = \frac{1}{t-z} - \frac{t}{1+t^2}$ we obtain that
$$
F'(z) = \frac{1}{2\pi i} \int_\r \frac{f(t)\d t}{(t-z)^2},
$$
and so $|F'(x+iy)|\leq \frac{1}{2y}\max_{t\in \r} |f(t)|$ for $y>0$, hence $F'$ is bounded on strips. Since
$$
F'(x+i) = \frac{1}{2\pi i} \int_\r \frac{f(t+x)\d t}{(t-i)^2}
$$
it is clear that any set of $\ep$-translations for $f$ is also one for $F'(\cdot +i)$, and so $F'(\cdot +i) \in\apr$, and Lemma \ref{lem:altdefapc} shows that $F'\in \apc$. Dominated Convergence implies that
$$
\E F'(\la) = e^{2\pi \la} \E f(\la) \frac{1}{2\pi i} \int_\r  \frac{e^{2\pi i \la t}}{(t-i)^2} = e^{2\pi \la} \E f(\la) 2\pi i |\la| e^{-2\pi |\la|}\1_{\la>0} = 2 \pi i \max\{\la,0\}\E f(\la) ,
$$
and, since $\spec(f)\cap (0,b)=\emptyset$ for some $b>0$, we obtain $\spec(F')\subset [b,\infty)$. We can now apply \cite[Thm. 9, p. 152]{Be} to conclude that $F\in \apc$.
\end{proof}

\begin{lemma}\label{lem:order1apc}
Let $f: \cp\to\cp$ be entire such that $f, f^*\in \apc$ and both have spectrum bounded from below. If $f$ has finite order, that is,
$$
{\rm order}(f):= \inf \{\rho>0 : \limsup_{|z|\to\infty} \frac{\log^+ |f(z)|}{|z|^{\rho}} < \infty\} < \infty,
$$
then $f$ has finite exponential type, that is, 
$$
\tau=\limsup_{|z|\to\infty} \frac{\log |f(z)|}{|z|} < \infty.
$$
In this case $f$ and $f^*$ are of bounded type, $f$ is bounded on the real line and $|f(z)| \leq M e^{\tau|y|}$ for all $z=x+iy$, where $M=\sup_{x\in \r} |f(x)|$ and $\tau=\max\{\vartheta(f^*),\vartheta(f)\}$. Moreover,  $\E f(\la)=\ov{\E (f^*)(-\la)}$ for all $\la\in \r$ and 
$$
\spec(f)=-\spec(f^*)\subset [-\tau/(2\pi),\tau/(2\pi)].
$$
If in addition $\spec(f)$ is locally finite, then $f$ is trigonometric polynomial.
\end{lemma}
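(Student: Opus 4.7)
The plan is to proceed in three stages.

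\emph{Step 1 (boundedness on $\r$).} Since $f, f^* \in \apc$, Lemma \ref{lem:altdefapc} implies that $f(\cdot+i)$ and $f^*(\cdot+i)$ lie in $\apr$ and are bounded on $\r$, so $|f|$ is uniformly bounded on the horizontal lines $\Im z = \pm 1$. The finite-order hypothesis yields the polynomial-in-$|z|$ growth $|f(z)| \leq e^{C|z|^{\rho}}$, which is vastly smaller than the doubly-exponential threshold required by Phragm\'en--Lindel\"of on the strip $|\Im z|\leq 1$; this produces $|f(x)|\leq M$ on the whole real line, with $M := \sup_{x\in\r}|f(x)|$.

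\emph{Step 2 (bounded type and sharp pointwise bound).} Let $\la_0 := \inf\spec(f)$, finite by hypothesis. By Lemma \ref{lem:fdisc}(i), the function $g(z) := e^{-2\pi i \la_0 z}f(z)$ has $\spec(g)\subset[0,\infty)$ and is bounded on $\{\Im z \geq \ep\}$ for every $\ep>0$. Combined with $|g(x)|\leq M$ on $\r$ and finite order, a second Phragm\'en--Lindel\"of application on the thin strip $0\leq \Im z\leq \ep$ gives $g\in H^\infty(\H)$, so $f = e^{2\pi i\la_0 z}g \in \bt(\H)$ and $\vartheta(f)\leq -2\pi\la_0$. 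The Nevanlinna--Poisson representation of $\log|f|$ for $f\in\bt(\H)$ with continuous boundary values bounded by $M$ on $\r$ then yields the sharp bound $|f(x+iy)|\leq M\, e^{\vartheta(f)y}$ for $y>0$. Applying the identical analysis to $f^*$, and translating via $|f^*(x+iy)|=|f(x-iy)|$, yields $|f(x+iy)|\leq M\, e^{\vartheta(f^*)|y|}$ for $y<0$. Combining: $|f(z)|\leq M\, e^{\tau|y|}$ with $\tau=\max\{\vartheta(f),\vartheta(f^*)\}$, and in particular $f$ has finite exponential type $\leq\tau$.

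\emph{Step 3 (spectrum and trigonometric polynomial case).} The identity $\E f(\la)=\ov{\E f^*(-\la)}$ follows from $f^*(z)=\ov{f(\bar z)}$ by substitution in the defining integral and the change of variable $w=\bar z$, the resulting mean along $\Im w=-y$ being identified with the mean along $\Im w=+y$ (i.e.\ $\E f(\la)$) by a Cauchy contour shift, whose vertical error $O((1/T)\cdot M e^{(\tau+2\pi|\la|)y})$ vanishes as $T\to\infty$ thanks to the exponential type from Step 2. Hence $\spec(f)=-\spec(f^*)$. Step 2 gives $\inf\spec(f)\geq -\vartheta(f)/(2\pi)$ and $\inf\spec(f^*)\geq -\vartheta(f^*)/(2\pi)$, so $\spec(f)\subset[-\tau/(2\pi),\tau/(2\pi)]$. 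If $\spec(f)$ is in addition locally finite, being contained in a bounded interval it is finite; setting $P(z):=\sum_{\la\in\spec(f)}\E f(\la)\, e^{2\pi i\la z}$, Lemma \ref{lem:propAP}(5) applied to $f(\cdot+i)\in\apr$ with $S=\spec(f)$ gives $\E|f(\cdot+i)-P(\cdot+i)|^2=0$, so $f-P$ is almost periodic with empty spectrum, and Lemma \ref{lem:propAP}(3) forces $f\equiv P$ on $\r$, whence on $\cp$ by analytic continuation.

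The main obstacle is the sharp pointwise bound in Step~2: the crude estimate $M\, e^{-2\pi\la_0 y}$ coming directly from Phragm\'en--Lindel\"of is in general weaker than $M\, e^{\vartheta(f)y}$, and closing the gap requires the Nevanlinna factorization of bounded-type functions in $\H$, with the singular inner factor at infinity carrying precisely the discrepancy $-2\pi\la_0-\vartheta(f)\geq 0$.
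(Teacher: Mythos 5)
Your proof is correct and follows a genuinely different route from the paper's, differing at three key junctures. First, to show $f$ is bounded on $\r$ and then of exponential type, the paper applies Phragm\'en--Lindel\"of on narrow \emph{sectors} $S_c=\{x>|y|/c\}$ containing the positive real axis (after damping by $e^{-2\pi|m|z}$ using the spectral lower bound), whereas you apply it directly on the horizontal \emph{strip} $|\Im z|\le 1$, where boundedness on the two boundary lines is immediate from $f,f^*\in\apc$; the strip version is arguably cleaner and avoids the damping factor bookkeeping. Second, to establish $f,f^*\in\bt(\H)$, the paper cites Krein's theorem \cite{Kr}, \cite[Thm.\ 1, p.\ 115]{L2}, while you produce the bounded-type factorization by hand, writing $f=e^{2\pi i\la_0 z}g$ with $g\in H^\infty(\H)$ obtained from Lemma~\ref{lem:fdisc}(i) plus one more thin-strip Phragm\'en--Lindel\"of; this makes the argument more self-contained at the cost of an extra PL application. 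Third, to upgrade to the sharp bound $|f(z)|\le Me^{\tau|y|}$, the paper uses the classical Phragm\'en--Lindel\"of theorem for entire functions of exponential type bounded on a line \cite[Thm.\ 3, p.\ 38]{L2}, whereas you invoke the Nevanlinna--Poisson inequality $\log|f(x+iy)|\le\vartheta(f)y+\frac{y}{\pi}\int_\r\frac{\log|f(t)|}{(t-x)^2+y^2}\,dt$ for bounded-type functions; both are standard, yours dovetails naturally with the $\bt(\H)$ framework already in play. The final step (spectrum finite $\Rightarrow$ trigonometric polynomial) also differs superficially: the paper cites Bochner approximation (Proposition~\ref{prop:bochapprox2}), you use Parseval and the vanishing-spectrum criterion, Lemma~\ref{lem:propAP}(3),(5); these are essentially equivalent. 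The Cauchy contour-shift argument for $\E f(\la)=\ov{\E f^*(-\la)}$ matches the paper exactly. Both proofs are valid; your version trades the reliance on Krein's theorem and the sector PL for an extra thin-strip PL and the Poisson inequality, which is a reasonable stylistic choice and arguably more streamlined given the $\bt(\H)$ machinery already developed in Section~\ref{sec:rep}.
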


\begin{proof}
Observe that since $f$  has spectrum bounded from below, then Lemma \ref{lem:fdisc}(i) shows that for $m=\min\{\inf \spec(f),\inf \spec(f^*)\}$ we have that $|f(z)e^{-2\pi i m z}|+|f^*(z)e^{-2\pi i m z}|$ is bounded for $\im z>c$, for any $c>0$. This in particular shows that $f(z)e^{-2\pi |m|  z}$ is bounded on the sides of any sector $S_c=\{z=x+iy : x >|y|/c\}$. Since $f$ has finite order, one can apply Phranm\"en-Lindel\"of for sectors (with small $c>0$) \cite[Thm. 1, p. 37]{L2} to conclude that $f(z)e^{-2\pi |m| z}$ is bounded in $S_c$. We then apply the same argument replacing $f(z)$ by $f^*(-z)$ to conclude that $f^*(-z)e^{-2\pi |m| z}$ is bounded in $S_c$. All these imply that $f$ has finite exponential type, that is,
$$
|f(z)| \ll e^{C|z|}
$$
for some $C>0$. However, since $f(z+i)e^{-2\pi i m (z+i)}$ is bounded on the real line, another application of Phranm\"en-Lindel\"of \cite[Thm. 3, p. 38]{L2} shows that $|f(z+i)e^{-2\pi i m (z+i)}| \ll e^{C'|y|} $ for some $C'>0$, and so $|f(z)|\ll e^{C''|y|}$ for some $C''>0$. In particular $f$ is bounded on every horizontal strip and, by the same result, we can take $C'=\tau$. A straightforward calculation using Cauchy's integral formula shows that
$$
\E f(\la)-\ov{\E (f^*)(-\la)} = \lim_{T\to\infty}\frac{1}{2T} \left( \int_{T-i}^{T+i} f(z)e^{-2\pi i \la z}\d z - \int_{-T-i}^{-T+i} f(z)e^{-2\pi i \la  z}\d z \right)=0,
$$
since $f$ is now bounded on horizontal strips. In particular, $\spec(f)=-\spec(f^*)$. A well-known result of Krein \cite{Kr} (see also \cite[Thm. 1, p. 115]{L2}) shows that $f,f^*\in \bt(\H)$ and that $\tau=\max\{\vartheta(f^*),\vartheta(f)\}$. Since $g(z)=f(z)e^{i \tau z}$ is bounded in $\H$, Lemma \ref{lem:fdisc}(i) shows that 
$\spec(f) +\tau/(2\pi)= \spec(g)\subset [0,\infty)$,
hence $\spec(f)\subset [-\tau/(2\pi),\infty)$. The same argument shows that $\spec(f^*)\subset [-\tau/(2\pi),\infty)$, but since $\E f(\la)=\ov{\E (f^*)(-\la)}$, we obtain that $\spec(f)\subset [-\tau/(2\pi),\tau/(2\pi)]$. Finally, if $\spec(f)$ is locally finite, Proposition \ref{prop:bochapprox2} proves straightforwardly that $f$ is a trigonometric polynomial.
\end{proof}

\section{Proof of Theorem \ref{thm:maineven}}

We start we some lemmas.

\begin{lemma}
 For $w,z\in \H$ let
\begin{align}\label{gspecial}
g(w,z,x)=\frac{e^{-2\pi i \w |x|}\1_{x<0} + e^{2\pi i z |x|}\1_{x\geq 0}}{z-\w}.
\end{align}
Then 
$$
\ft g(w,z,\xi) =\frac{1}{2\pi i(\xi-z)(\xi-\w)}.
$$
\end{lemma}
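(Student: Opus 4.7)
The plan is direct computation: the function $g(w,z,\cdot)$ is integrable on $\r$ because $z,\ov{w}$ have imaginary parts of opposite signs ($\im z>0$ and $\im \ov w<0$), so the exponentials on $[0,\infty)$ and on $(-\infty,0)$ decay at $+\infty$ and $-\infty$ respectively. I would therefore write $\ft g(w,z,\xi) = I_+(\xi)+I_-(\xi)$ with
\begin{align*}
I_+(\xi) &= \frac{1}{z-\ov w}\int_0^\infty e^{2\pi i(z-\xi)x}\,\d x, &
I_-(\xi) &= \frac{1}{z-\ov w}\int_{-\infty}^0 e^{2\pi i(\ov w-\xi)x}\,\d x,
\end{align*}
and evaluate each elementary integral. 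Since $\im(z-\xi)>0$, the boundary term at $+\infty$ vanishes and $I_+(\xi) = \frac{1}{2\pi i(\xi-z)(z-\ov w)}$. Since $\im(\ov w-\xi)<0$, the boundary term at $-\infty$ vanishes and $I_-(\xi) = -\frac{1}{2\pi i(\xi-\ov w)(z-\ov w)}$.

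To finish I would apply the partial fraction identity
$$
\frac{1}{2\pi i(z-\ov w)}\left(\frac{1}{\xi-z}-\frac{1}{\xi-\ov w}\right)=\frac{1}{2\pi i(\xi-z)(\xi-\ov w)},
$$
which is just $\frac{1}{\xi-z}-\frac{1}{\xi-\ov w}=\frac{z-\ov w}{(\xi-z)(\xi-\ov w)}$. Adding $I_+(\xi)$ and $I_-(\xi)$ produces exactly this combination, and hence the claimed formula. There is no genuine obstacle here; the only thing to keep an eye on is the sign and conjugation bookkeeping coming from the convention that $\im \ov w<0$ when $w\in\H$, which controls which endpoint of each exponential integral vanishes.
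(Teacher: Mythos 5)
Your computation is correct and is essentially the same as the paper's: the paper also splits $g$ into a half-line piece for $x>0$ and one for $x<0$ (packaged via an auxiliary function $f_z(x)=e^{2\pi i z|x|}\1_{x>0}+\tfrac12\1_{x=0}$), computes each one-sided exponential integral, and adds. Same decomposition, same use of the sign of $\im z$ and $\im\ov w$ to kill the boundary terms, same partial-fraction recombination.
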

\begin{proof}
Letting $f_z(x) = e^{2\pi i z |x|}\1_{x> 0} + \frac{1}{2}\1_{x=0}$ we have
$$
\ft f_z(\xi) = \int_0^\infty e^{2\pi i x(z-\xi)}\d x = \frac{1}{2\pi i (\xi-z)}.
$$
The lemma follows since $g(w,z,x) = (z-\w)^{-1}(f_{-\w}(-x)+f_z(x))$.
\end{proof}

\begin{lemma}\label{lem:limexist}
Let $(\mu,a)$ be a $\fs$-pair with $\deg(\mu)\leq 2$. Then 
\begin{align}\label{crucialid}
\lim_{T\to\infty} \sum_{|\la|<T}a(\la)g(w,z,\la)(1-|\la|/T) = \frac{1}{2\pi i}\int_\r  \frac{\d \mu(t)}{(t-z)(t-\w)}
\end{align}
uniformly for $w$ and $z$ in the region
$$
R_c:=\{z\in \H : |\Re z| \leq  1/c \text{ and } \Im z \geq c\}, \quad (\text{for any } c>0).
$$
\end{lemma}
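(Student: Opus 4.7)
The plan is to apply the summation formula defining the $\fs$-pair to the test function $\p_T(\xi) := g(w,z,\xi)\triangle_T(\xi)$, where $\triangle_T(\xi) := \max(1-|\xi|/T,\, 0)$, and then let $T\to\infty$. Summing $a(\la)\p_T(\la)$ over $\la$ reproduces exactly the Ces\`aro partial sum on the left hand side of \eqref{crucialid}, while on the Fourier side one has $\ft\p_T = \ft g * F_T$, where $F_T := \ft\triangle_T$ is the Fej\'er function (nonnegative, with $\int_\r F_T = 1$ and $|F_T(x)|\leq 1/(\pi^2 T x^2)$) and $\ft g(w,z,s) = (2\pi i(s-z)(s-\w))^{-1}$ by the previous lemma. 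This is a convolution of $\ft g$ against an approximate identity, which should converge back to $\ft g$ as $T\to\infty$.

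Since $\p_T$ is only piecewise smooth (kinks at $0$ and $\pm T$), the $\fs$-pair identity does not apply directly. I would first mollify by setting $\p_T^{(n)} := \p_T*\eta_n\in C^\infty_c(\r)$ for a standard mollifier $\eta_n$ supported in $(-1/n,1/n)$, apply the identity $\sum_\la a(\la)\p_T^{(n)}(\la) = \int_\r \ft\p_T^{(n)}\,\d\mu$, and let $n\to\infty$. The left side is a finite sum (by local summability of $a$ and compact support of $\p_T^{(n)}$) and converges to $\sum_{|\la|<T}a(\la)g(w,z,\la)(1-|\la|/T)$ by the uniform convergence $\p_T^{(n)}\to\p_T$. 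On the right, $\ft\p_T^{(n)} = \ft\p_T\cdot \ft\eta_n$ with $|\ft\eta_n|\leq 1$ tending to $1$ pointwise, and dominated convergence $\int \ft\p_T^{(n)}\,\d\mu\to \int\ft\p_T\,\d\mu$ requires $\ft\p_T\in L^1(|\mu|)$; this follows by integrating by parts twice (the second distributional derivative of $\p_T$ is a bounded function plus a linear combination of the Dirac measures at $-T,0,T$), yielding $|\ft\p_T(t)|\leq C(c,T)(1+t^2)^{-1}$, which is integrable against $|\mu|$ since $\deg(\mu)\leq 2$. This produces the intermediate identity
$$
\sum_{|\la|<T} a(\la)g(w,z,\la)(1-|\la|/T) = \int_\r \ft\p_T(t)\,\d\mu(t).
$$

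The main technical step is to upgrade this to the $T$-uniform estimate $|\ft\p_T(t)|\leq C(c)(1+t^2)^{-1}$, valid for all $T\geq 1$ and $w,z\in R_c$. Starting from the uniform bound $|\ft g(w,z,s)|\leq C(c)(1+s^2)^{-1}$ on $R_c$ (immediate from the explicit formula), I would split the convolution integral $\int \ft g(w,z,s)F_T(t-s)\,\d s$ for $|t|\geq 1$ into the regions $|s|\leq |t|/2$ (where $|t-s|\geq |t|/2$ gives $F_T(t-s)\leq 4/(\pi^2 T t^2)$, producing a contribution of order $(Tt^2)^{-1}$) and $|s|>|t|/2$ (where $(1+s^2)^{-1}\leq 4/t^2$ combined with $\int_\r F_T=1$ gives a contribution of order $t^{-2}$); for $|t|<1$ the trivial bound $\sup_s|\ft g(w,z,s)|\leq C(c)$ suffices. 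Combined with the pointwise convergence $\ft\p_T(t)\to \ft g(w,z,t)$ arising from the Fej\'er approximate identity applied to the equicontinuous family $\{\ft g(w,z,\cdot)\}_{w,z\in R_c}$ (uniform modulus of continuity from $|\partial_t \ft g|\leq C(c)$), a final application of dominated convergence against the $L^1(|\mu|)$-dominating function $C(c)(1+t^2)^{-1}$ produces the limit identity. Uniformity in $w,z\in R_c$ then follows from the uniformity of all the involved bounds and equicontinuity estimates.

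The main obstacle is precisely the $T$-uniform bound $|\ft\p_T(t)|\leq C(c)(1+t^2)^{-1}$; without it, dominated convergence as $T\to\infty$ fails. The remaining steps are routine mollification or standard approximate-identity arguments, exploiting the explicit forms of $g$ and $\triangle_T$.
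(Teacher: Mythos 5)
Your proposal follows essentially the same strategy as the paper's proof: test against a mollification of $g(w,z,\cdot)\triangle_T$, pass the mollification parameter to zero using a $T$-fixed integrability bound, and then take $T\to\infty$ using a $T$-uniform $L^1(|\mu|)$-dominating estimate. The only genuine difference is the proof of the key uniform bound $|\ft g(w,z,\cdot)*F_T(t)|\ll_c (1+t^2)^{-1}$: you obtain it by an elementary near/far splitting of the convolution integral (using the explicit decay of the Fej\'er kernel and of $\ft g$ on $R_c$), whereas the paper shifts the contour $t\mapsto t+i/T$ to replace $\sin^2(\pi T t)/t^2$ by a quantity bounded by $(t^2+1/T^2)^{-1}$ and then evaluates the remaining convolution of Lorentzians in closed form. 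Both are valid and equally short; your variant is slightly more elementary while the paper's gives a clean closed-form expression. One small remark: for the mollified stage you invoke a bound $|\ft\p_T(t)|\leq C(c,T)(1+t^2)^{-1}$ from integration by parts, but this is redundant given that the $T$-uniform bound on $\ft g*F_T$ already dominates $|\ft\p_T^{(n)}|=|\ft g*F_T|\cdot|\ft\eta_n|\leq |\ft g*F_T|$, so the same dominating function serves both the $n\to\infty$ and $T\to\infty$ limits (which is exactly how the paper organizes it). Aside from that minor inefficiency, the argument is correct.
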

\begin{proof}
Let $\p\in C_c^\infty(\r)$, $\int \p = 1$, $\p\geq 0$, $\supp \p \subset (-1,1)$ and define $\p_\ep(x)=\p(x/\ep)/\ep$ for $0<\ep<1$. Let $w,z\in R_c$, $T>2/c$ and
$$
g_{\ep,T}(x)  : = (g(w,z,\cdot)(1-\tfrac{1}T|\cdot|)_+) * \p_\ep(x),
$$
where $s_+=\max\{0,s\}$. Note that $g_{\ep,T}\in C_c^\infty(-\ep-T,T+\ep)$ and
$$
\ft g_{\ep,T}(\xi) = (\ft g(w,z,\cdot) * {S_T}(\xi)) \ft \p(\ep \xi),
$$
where $S_T(x)=\frac{\sin^2(T \pi x)}{T (\pi x)^2}$. Note that $|\ft \p(\ep \xi) |\leq 1$. We claim that\footnote{We write $A\ll_p B$ if there is a numerical constant $K>0$, depending only in the parameter $p$, such that $A\leq K B$.}
\begin{align}\label{eq:unifbound}
|\ft g(w,z,\cdot) * {S_T}(\xi) | \ll_c \frac{1}{\xi^2+c^2}.
\end{align}
We shall prove this claim in the end. Since $(\mu,a)$ is a $\fs$-pair, in particular we obtain
$$
\sum_{|\la|<{T+\ep}}a(\la)g_{\ep,T}(\la) = \int_\r \ft g_{\ep,T}(t)\d\mu(t).
$$
Since $\a$ is locally summable, and as $\ep\to0$, $g_{\ep,T}(x)\to g(w,z,x)(1-|x|/T)_+$ uniformly in $x\in \r$ and $\ft g_{\ep,T}(\xi) \to \ft g(w,z,\cdot) * S_T(\xi)$ pointwise, Dominated Convergence implies that
$$
\sum_{|\la|<T}a(\la)g(w,z,\la)(1-|\la|/T) =  \int_\r \ft g(w,z,\cdot) * {S_T}(t)\d\mu(t).
$$
It is now enough to show the right hand side above converges uniformly in the region $(w,z)\in R_c^2$.  This can be done by noticing that 
$$
|\ft g(w,z,\xi_1)-\ft g(w,z,\xi_2)| = \frac{1}{2\pi |z-\w|}\bigg|\frac{1}{\xi_1-z}-\frac{1}{\xi_1-\w}-\frac{1}{\xi_2-z}+\frac{1}{\xi_2-\w}\bigg| \leq \frac{|\xi_1-\xi_2|}{4\pi c^3},
$$
and so $\ft g(w,z,\cdot) * {S_T}(\xi) \to \ft g(w,z,\xi)$ uniformly in $x\in \r$ and $(w,z)\in R_c$, as $T\to\infty$. Finally, the uniform bound \eqref{eq:unifbound} guarantees the desired uniform convergence. It remains to prove the claim \eqref{eq:unifbound}. First note that 
$$
|\ft g(w,z,\xi)|  \ll \frac{1}{(\xi-\Re z)^2+c^2} + \frac{1}{(\xi-\Re w)^2+c^2} \ll_c \frac{1}{c^2+\xi^2},
$$
and so we can use a contour change  (and that $T>2/c$) to obtain 
\begin{align*}
|g(w,z,\cdot) * {S_T}(\xi)|  & \ll_c \frac1{T} \int_{\r} \frac{\sin^2(\pi T t)}{((\xi-t)^2+c^2)t^2}\d t  = \frac1{T} \int_{\r} \frac{\sin^2(\pi T (t+i/T))}{((\xi-t-i/T)^2+c^2)(t+i/T)^2}\d t \\
& \ll \frac1{T} \int_{\r} \frac{1}{((\xi-t)^2+c^2)(t^2+1/T^2)}\d t = \frac{\pi(c+1/T)/c}{\xi^2 + (c+1/T)^2} \ll \frac{1}{c^2+\xi^2}.
\end{align*}
\end{proof}

\vspace{-1cm}

\begin{proof}[\bf Proof of Theorem \ref{thm:maineven} (and Theorem \ref{cor:1})]
$\nonumber$

\noindent {\it \underline{Necessity}.}  
Assume that $(\mu,a)$ is an real-antipoal $\fs$-pair such that $\deg(\mu)\leq 2$ and $\a$ has exponential growth. Since $a(-\la)=\ov{a(\la)}$, we can apply Lemma \ref{lem:limexist} (multiplying both sides by $(z-\w)$) to obtain
\begin{align*}
& a(0) + \lim_{T\to\infty} \bigg(\sum_{0<\la<T} a(\la)e^{2\pi i  \la z}(1-\la/T)  + \sum_{0<\la<T} \ov{a(\la)}e^{-2\pi i \la \w}(1-\la/T) \bigg) \\ & = \frac{1}{2\pi i}\int_\r  \bigg(\frac{1}{t-z}-\frac{1}{t-\w}\bigg){\d \mu(t)} = \frac{1}{2\pi i}\int_\r  \frac{1+tz}{t-z}\frac{\d\mu(t)}{1+t^2} - \frac{1}{2\pi i}\int_\r  \frac{1+t\w}{t-\w}\frac{\d\mu(t)}{1+t^2},
\end{align*}
where the limit exists and is uniform for $w$ and $z$ in compact sets of $\H$. In particular, we conclude there exists $h\in\r$ such that
\begin{align}\label{eq:fPoissonrep}
f(z) = \frac12 a(0) + \lim_{T\to\infty} \sum_{0<\la<T} a(\la)e^{2\pi i  \la z}(1-\la/T) = ih+ \frac{1}{2\pi i}\int_\r  \frac{1+tz}{t-z}\frac{\d\mu(t)}{1+t^2},
\end{align}
where the limit above exists and is uniform in compact sets. A simple computation shows
\begin{align}\label{eq:fkernelrep}
\frac{f(z)+\ov{f(w)}}{z-\ov w} =  \frac{1}{2\pi i}\int_\r  \frac{\d \mu(t)}{(t-z)(t-\w)}.
\end{align}
This proves assertion (I).  Since $\a$ has exponential growth, we let $c=\inf \{b>0 : \sum_{\la\in \r} |a(\la)|e^{-2\pi b |\la|} < \infty\}$. Then we must have
$$
f(z) = \frac12 a(0) +\sum_{\la>0} a(\la)e^{2\pi i  \la z}
$$
for $\Im z>c$, as the above series converges absolutely and uniformly for $\Im z > c + \ep$, for any $\ep>0$. Thus, after choosing an enumeration for $\supp(a)$, we conclude that $f(\cdot +ih)$ is the uniform limit of trigonometric polynomials and we obtain that $f(\cdot +ih) \in \apr$ for every $h>c$.  By Lemma \ref{lem:altdefapc} we obtain $f(\cdot+ic)\in \apc$. This proves assertion (II). Assertion (III) is direct since if $p(x)=\sum_{\theta \in S} p_\theta e^{2\pi i \theta x}$, for some finite set $S\subset [0,\infty)$, then
\begin{align*}
\limsup_{T\to\infty}\bigg| \frac{1}{2T}\int_{-T}^T f(x+2ic)\ov{p(x)}\d x\bigg| & = \bigg| \frac12a(0)\del_S(0) + \sum_{\la>0} a(\la)e^{-4\pi c\la} \del_S(\la) p_\la \bigg| \\
& \ll \max |p_\theta| \sum_{\la\in \r} |a(\la)|e^{-4\pi c |\la|} \ll \max |p_\theta|.
\end{align*}
Finally, by Lemma \ref{lem:condatinfty}, assertion (IV) is implied by representation \eqref{eq:fkernelrep}.

\noindent {\it \underline{Sufficiency}.}  We prove sufficiency replacing property $({\rm III})$ by $({\rm III}^*)$ as in Theorem \ref{cor:1}, and so also proving Theorem \ref{cor:1}. Let $c_1=\inf\{b>0 : f(\cdot + ib)\in \apc\}$ and $f_1(z)=f(z+c_1)$. By condition $({\rm II})$, we can apply Lemma \ref{lem:altdefapc} to deduce the limit
$$
\E f_1(\la)=\lim_{T\to\infty} \frac{1}{2T}\int_{-T+iy}^{T+iy} f_1(z)e^{-2\pi i \la z}\d z =e^{-2\pi \la c_1} \lim_{T\to\infty} \frac{1}{2T}\int_{-T+iy+ic_1}^{T+iy+ic_1} f(z)e^{-2\pi i \la z}\d z 
$$
exists, does not depend on $y>0$ and is nonzero only for countable many real $\la$'s. We also have that $\sum_{\la} |\E f_1(\la)|^2e^{-4\pi h \la} < \infty$ for all $h>0$. This shows that 
$$
\E f(\la) = \lim_{T\to\infty} \frac{1}{2T}\int_{-T+iy}^{T+iy} f(z)e^{-2\pi i \la z}\d z
$$
is independent of $y>c_1$ and is nonzero only for countable many real $\la$'s. By condition $({\rm III}^*)$, there is $B_M>0$ and $c_2\geq c_1$ (with equality only if $f(\cdot+ic_1)\in \apr$) such that
$$
B_M \max |p_\theta| \geq \limsup_{T\to\infty}\bigg| \frac{1}{2T}\int_{-T}^T f(x+ic_2)\ov{p(x)}\d x\bigg| = \bigg| \sum_{0\leq \la\leq M} \E f(\la)e^{-2\pi c_2\la} \del_S(\la) p_\la \bigg|
$$
whenever $p(x)=\sum_{\theta \in S} p_\theta e^{2\pi i \theta x}$ for some finite set $S\subset [0,M]$. We conclude that 
$\spec(f)\subset [0,\infty)$ and
$$
\sum_{0\leq \la\leq M} |\E f(\la)| \leq B_M e^{2\pi c_2 M}
$$
for every $M>0$, so the function $a(\la)$ defined in \eqref{eq:adef} is locally summable. Lemma \ref{lem:fdisc}(i) $f$ is bounded for $\im z>2c_1$, in particular $\lim_{y\to\infty} \re f(iy)/y=0$. By condition $({\rm IV})$, we can now apply Lemma \ref{lem:condatinfty} to deduce there is a real-valued locally finite measure $\mu$ of degree at most $2$ such that \eqref{eq:fkernelrep} is true. In particular,
$$
2\Re f(x+iy) = \frac{y}{\pi }\int_\r  \frac{\d \mu(t)}{(t-x)^2+y^2}.
$$
As in Lemma \ref{lem:condatinfty} one can then write $f=f_1-f_2$ such that $\Re f_j \geq 0$ in $\H$ and $f_j(i)=\int_{\r} \frac{\d \mu_j(t)}{2\pi(1+t^2)}$. We can then apply Harnack's inequality \cite[p. 136]{Be}, that translates to
$$
\bigg|\frac{f_j(z)-f_j(i)}{f_j(z)+f_j(i)}\bigg| \leq \bigg|\frac{z-i}{z+i}\bigg|.
$$
Since 
$$
\frac{|z+i|+|z-i|}{|z+i|-|z-i|}\leq \frac{(1+|z|)^2}{4y} \quad (z=x+iy),
$$
we conclude that $|f_j(z)| \leq  \frac{(1+|z|)^2}{4y}f_j(i)$ and so $|f(z)|\leq  \frac{(1+|z|)^2}{4y}\int_{\r} \frac{\d |\mu|(t)}{2\pi(1+t^2)}$. We deduce that
$$
\liminf_{r\to\infty}\frac{1}{r} \int_0^\pi \log^+|f(re^{i\theta})|\sin \theta \d \theta = 0,
$$
and so one can now apply the Phranm\"en-Lindel\"of principle (as in \cite[Thm. 1]{dB}) to obtain that $f(z+ic)$ is bounded in $\H$ for every $c>c_1$. Lemma \ref{lem:fdisc}(i) shows that $\E f(\la)=0$ for $\la<0$. Finally, let us prove that $(\mu,a)$ is a $\fs$-pair. For $z\in \H$ and $t\in \r$ let $P_z(t)=\frac{z}{\pi i(t^2-z^2)}$. Since identity \eqref{eq:fkernelrep} holds true, a simple computation shows
$$
{f(z+s)+\ov{f(-\bar z +s)}} = P_z * \mu(s)
$$
for all $s\in\r$. By Bochner's approximation on $\apr$, for every $h>0$, one can find a sequence of functions $d_n:\r\to[0,1)$, each with finite support and such that $\lim_{n} d_n(\la)={\bf 1}_{\spec(f)}(\la)$, and that
$$
\lim_n \sup_{\Im z = c_1+h} |f(z) - \sum_{\la \geq 0} \E f(\la)d_n(\la) e^{2\pi i \la z}| = 0.
$$
Let $\p \in C_c^\infty((-M,M))$ be antipodal. It is now straightforward (using Bochner's approximation, Dominated Convergence and local summability) to conclude that
$$
\int_{\r} f(z+s)\ft \p(s) \d s = \sum_{0\leq \la < M} \E f(\la) \p(\la) e^{2\pi i \la z}.
$$
if $\Im z> c_1$. Another routine computation (using that $a(0)=2\re \E f(0)$) shows
\begin{align*}
\sum_{|\la|<M} a(\la) \p(\la) e^{2\pi i |\la| z} = \int_\r [{f(z+s)+\ov{f(-\bar z +s)}}] \ft \p(s)\d s &= \int_{\r} P_z * \mu(s)\ft \p(s) \d s \\
& = \int_{\r} P_z * \ft \p(t) \d \mu(t)
\end{align*}
if $\Im z> c_1$. By real-antipodal splitting and linearity, the above formula holds for all $\p\in C_c^\infty(\r)$. Now observe that both sides extend analytically to $\Im z >0$. Taking $z=iy$ and noting that $P_{iy}$ is Poisson's kernel, a routine application of Dominated Convergence and approximate identity arguments (using that $\a$ is locally summable and $\deg(\mu)\leq 2$) allows us to take $y\to 0$ and conclude that
$$
\sum_{|\la|<M} a(\la) \p(\la)  =\int_\r \ft \p(t) \d \mu(t).
$$
Therefore $(\mu,a)$ is an $\fs$-pair. This finishes the proof.
\end{proof}

\section{$\fs$-pairs, an account of various examples.}\label{sec:exemp}
Throughout this section we will define certain families of $\fs$-pairs for which we will try to provide as many examples as possible, old and new. We define these families so that they are closed under scaling, translation, modulation and multiplication by scalar.

\subsection{Finite Support}
Let $(\mu,a)$ be a $\fs$-pair such that $\a$ has finite support, that is, there is $\{p_n\}_{n=1}^N\subset \cp$ and $\{\la_n\}_{n=1}^N\subset \r$ such that
$$
\int_\r \ft \p(t) \d\mu (t) = \sum_{n=1}^N p_n \p(\la_n)
$$
for all $\p\in C^\infty_c(\r)$. It is easy to see that we must have $\d\mu (t) = \sum_{n=1}^N p_n e^{2\pi i \la_n t}\d t$. 

\subsection{Uniformly Discrete Poisson Summation ($\udps$)} We denote $\udps$ the set of $\fs$-pairs $(\mu,a)$ such that there is $\al>0$, $\{\theta_j\}_{j=1}^N\subset [0,\alpha)$ and trig-polynomials $\{Q_j\}_{j=1}^N$ where
$$
\mu= \sum_{j=1}^N \sum_{\la \in \al \z+\theta_j} Q_j(\la) \del_\la.
$$
It is easy to see that if $(\mu,a)\in \udps$ then $\a$ is also of the above form. The breakthrough of Lev \& Olveskii \cite{LO2} shows that $(\mu,a)\in \udps$ if and only if $\mu$ and $\a$ have uniformly discrete support (assuming apriori that both are strongly tempered), which justifies the name of this family.  We observe  that we can always assume that all $Q_j's$ are equal since by classical interpolation results (for instance, Lagrange interpolation via Chebyshev polynomials) one can always find trigonometric polynomials $P_j$, which are $\alpha$-periodic, and such that $P_j(\theta_k)=\del_{j,k}$. Thus, if we let $Q = \sum_{j=1}^N P_j Q_j$ then $Q(\al n + \theta_j)=Q_j(\al n +\theta_j)$ for all $n\in \z$, and so
$$
\mu = \sum_{\la \in \cup_{j=1}^N (\al \z+\theta_j)} Q(\la) \del_\la.
$$

\subsection{Real Rooted Trigonometric Polynomial ($\rrtp$)}
We let the class $\rrtp$ to be the family of $\fs$-pairs $(\mu,a)$ such that there are four trigonometric polynomials $E_j$ ($j=1,2,3,4$) of Hermite-Biehler class, that is, $|E_j(z)|>|E_j(\ov z)|$ for $\im z>0$, and such that
\begin{align*}
\mu &= \sum_{\, \p_1(\ga)\equiv 0 \,({\rm mod} \, \pi)} \frac{Q_1(\ga)}{\p_1'(\ga)} {\del}_{\ga}-\sum_{\, \p_2(\ga)\equiv 0 \,({\rm mod} \, \pi)} \frac{Q_2(\ga)}{\p_2'(\ga)} {\del}_{\ga} \\
& -  i\sum_{\, \p_3(\ga)\equiv 0 \,({\rm mod} \, \pi)} \frac{Q_3(\ga)}{\p_3'(\ga)} {\del}_{\ga}+i\sum_{\, \p_4(\ga)\equiv 0 \,({\rm mod} \, \pi)} \frac{Q_4(\ga)}{\p_4'(\ga)} {\del}_{\ga},
\end{align*}
where $Q_j$ are trigonometric polynomials, $\p_j$ is the phase function associated with $E_j$, defined by the condition that $E_j(x)e^{i\p_j(x)}$ is real for all $x\in \r$ ($\p_j$ is uniquely defined modulo $\pi$, see \eqref{def:hermitebiehler}). We observe that if we write $E_j=A_j-iB_j$, were $A_j,B_j$ are trigonometric polynomials which are real on the real line, then both $A$ and $B$ have only real zeros, $A/B$ have only simple real zeros and simple poles which interlace, 
 $$
\{\ga\in \r : \, \p_j(\ga)\equiv 0 \,({\rm mod} \, \pi)\} = {\rm Zeros}(B_j/A_j) \quad \text{and} \quad  \frac1{\p_j'(\ga)} = \res{\ga}(A_j/B_j)>0.
 $$
By Remark \ref{rem:KS}, $\rrtp$ contains the pairs $(\mu,\ft \mu)$ where
$$
\mu=\sum_{P(\ga)=0} m(\ga)Q(\ga)\del_{\ga}
$$
and $Q$ and $P$ are any given trigonometric polynomials, $P$ with only real roots and $m(\ga)$ is the multiplicity of the zero $\ga$. These measures were first introduced and studied by Kurasov and Sarnak \cite{KS}. Observe that $ \udps \subset \rrtp$ since $\cup_{j} \al \z + \theta_j=\text{Zeros}(\prod_j \sin(\pi(z-\theta_j)/\al))$. Indeed $\rrtp$ is a much richer class than $\udps$ since, for instance, $$P(z)=\det(U + \text{diag}(e^{2\pi i l_1 z},...,e^{2\pi i l_n z}))$$ has only real roots if $U$ is a $n\times n$ unitary matrix and $l_1,....,l_n \in \r$. The zeros of such polynomials are generically not contained in a finite union of arithmetic progressions \cite{AV}. Trigonometric polynomials with only real zeros have been classified recently in \cite{ACV} as restrictions of Lee-Yang polynomials. A polynomial $R:\cp^n\to\cp$ is Lee-Yang if $R(z_1,...,z_n)\neq 0$ whenever $\min\{\max\{|z_1|,...,|z_n|\},\max\{|z_1|^{-1},...,|z_n|^{-1}\}\}<1$.  Indeed, the main result in \cite{ACV} states that if $P$ is a trigonometric polynomial with only real zeros then there exists a Lee-Yang polynomial $R(z_1,...,z_n)$ and positive reals $l_1,...,l_n$, linear independent over $\q$, such that 
$$
P(x)=R(e^{il_1x},...,e^{il_n x}).
$$
A simple example is
\[R(z_1,z_2)=\det\left(\text{Id}_3- \begin{bmatrix}
   0& -1/3 & 4/3 \\1 & 0& 0 \\ 0&  2/3 &1/3
\end{bmatrix}\text{diag}(z_1,z_1,z_2)\right) = 1+z_1^2/3-z_2(1/3+z_1^2)
\]
and
$$
P(z) = \frac{3i}2R(e^{\pi i (\sqrt{2}-1) z},e^{2 \pi i(1+ \sqrt{2}) z})=\sin(2\pi z)+3\sin(2\pi \sqrt{2} z).
$$
Note the characteristic polynomial of the matrix above is $(x-1)(3x^2+2x+3)/3$, with roots $\{1,-1/3\pm i\sqrt{8}/3\}$, and so it is an unitary matrix. Hence, the polynomial $R$ above is Lee-Yang and $P$ has only real roots.

\subsection{Crystalline Measures ($\cm$)}
We let $\cm$ be the class of $\fs$-pairs $(\mu,a)$ such that both $\mu$ and $\a$ have locally finite support. By Theorem \ref{thm:HBmupos} we conclude that $\rrtp\subset \cm$. One main theme investigated in \cite{KS,AV,ACV} is to find pairs $(\mu,a)\in \cm$ such that $\supp(a)$ and/or $\supp(\mu)$ intersects any infinite arithmetic progression at most finitely\footnote{So very non-periodic}. Guinand \cite[p. 265]{Gui}, in 1959, was ``almost the first" to produce such pair, as we shall explain below. He constructs a (self-dual) $\fs$-pair $(\mu,\mu)$ with
$$
\mu = \sum_{n\geq 0} c_n (\del_{\sqrt{n+1/9}}+\del_{-\sqrt{n+1/9}}),
$$
where $c_n$ are the cofficients of the modular form\footnote{Guinand did not realized this nor constructed his example in the way we do here.}
\begin{align}\label{eq:guiexa}
\begin{split}
\frac{\eta(z)^{2/3}\eta(4z)^{2/3}}{\eta(2z)^{1/3}} &= q^{1/9}\left(1
 - \frac{2}{3}\*q
 - \frac{4}{9}\*q^2
 - \frac{40}{81}\*q^3
 - \frac{160}{243}\*q^4
 + \frac{268}{729}\*q^5
 + \frac{1808}{6561}\*q^6 + \cdots\right) \\
& = \sum_{n\geq 0} c_n q^{1/9+n}.
\end{split}
\end{align}
Above
$$
\eta(z)=q^{1/24}\prod_{n\geq 1}(1-q^n) = \sum_{n\geq 1} \chi_{12}(n)q^{n^2/24}
$$
is the Dedekind eta-function, $q=e^{2\pi i z}$ and $\chi_{12}$ is the Dirichlet character of modulus $12$ with $\chi_{12}(1)=\chi_{12}(11)=-\chi_{12}(5)=-\chi_{12}(7)=1$ and zero otherwise.
Note that if $n=9m^2+2m$ then $\sqrt{n+1/9}=3m+1/3$, and so the support of $\mu$ contains an infinite arithmetic progression $\{3m+1/3\}_{m\geq 1}$.
We will show below that, with the right point of view, one can embed Guinand's example and also classical Poisson summation in a real one-parameter family, for which almost all members have the property that their support intersect any infinite arithmetic progression at most four times. For that we need first the following lemma.

\begin{lemma}\label{lem:FG}
Let $F\in \apc$, with locally finite spectrum contained in $[0,\infty)$ and assume that 
$$
\sum_{n\geq 0} |\E F(\ga_n)|(1+\ga_n)^{-k}<\infty
$$
for some $k>0$, where $\spec(F)=\{\ga_n\}_{n\geq 0}$. Assume that $G(z)=F(-1/z)\sqrt{i/z}$ also belongs to $\apc$,  has locally finite spectrum contained in $[0,\infty)$ and $\E G$ satisfies a similar estimate the one above for $\E F$. Let
$$
\mu = \sum_{n\geq 0} \E F(\ga_n)(\del_{\sqrt{2\ga_n}}+\del_{-\sqrt{2\ga_n}}).
$$
Then $\mu$ is a strongly tempered measure and
$$
\ft \mu = \sum_{n\geq 0} \E G(\la_n)(\del_{\sqrt{2\la_n}}+\del_{-\sqrt{2\la_n}})
$$
where $\spec(G)=\{\la_n\}_{n\geq 0}$. Hence $(\mu,\ft \mu)\in \cm$.
\end{lemma}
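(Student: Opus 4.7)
My plan is to verify the distributional identity $\ft\mu=\nu$ by testing against complex Gaussians $\phi_z(x)=e^{-\pi zx^2}$, $\re z>0$, where the standard Fourier identity $\ft\phi_z(\xi)=z^{-1/2}e^{-\pi\xi^2/z}$ combines with the hypothesized modular functional equation $G(w)=F(-1/w)\sqrt{i/w}$ to yield the identity on the nose, and then to extend by density of Hermite functions in Schwartz space.

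First, I would verify $\mu$ is strongly tempered (and similarly $\nu$). For $\phi\in\mathcal{S}(\r)$ and any $N\geq 2k$,
$$\int|\phi|\,d|\mu|\leq \sum_{n\geq 0}|\E F(\gamma_n)|\bigl(|\phi(\sqrt{2\gamma_n})|+|\phi(-\sqrt{2\gamma_n})|\bigr)\ll_\phi \sum_{n\geq 0}|\E F(\gamma_n)|(1+\gamma_n)^{-N/2}<\infty$$
by hypothesis. Thus $\phi\mapsto\int\phi\,d\mu$ is a continuous linear functional on the Fr\'echet space $\mathcal{S}(\r)$, and likewise for $\nu$ via the decay of $\E G$.

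Next, I would derive the Gaussian identity. Since $F\in\apc$ has spectrum in $[0,\infty)$ and its Fourier coefficients decay in a polynomially summable fashion, the series $\sum_{n\geq 0}\E F(\gamma_n)e^{2\pi i\gamma_n w}$ converges absolutely and uniformly on $\{\im w\geq\delta\}$ for any $\delta>0$ (any polynomial is dominated by any exponential at infinity), and on each horizontal line has the same Fourier coefficients as the almost-periodic function $F$, hence equals $F$ there by Lemma \ref{lem:propAP}(3). The analogue holds for $G$. Specializing the series for $F$ at $w=i/z$ and for $G$ at $w=iz$ yields
$$\int\ft\phi_z\,d\mu=2z^{-1/2}F(i/z),\qquad \int\phi_z\,d\nu=2G(iz)$$
for $\re z>0$. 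The functional equation taken at $w=iz$ gives $G(iz)=F(-1/(iz))\sqrt{i/(iz)}=F(i/z)\cdot z^{-1/2}$, so both integrals equal $2z^{-1/2}F(i/z)$; equivalently $\langle\ft\mu-\nu,\phi_z\rangle=0$ for every $\re z>0$.

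Finally, I would extend to $\mathcal{S}(\r)$. Both $\mu$ and $\nu$ are symmetric about $0$, so their Fourier transforms are even as distributions; consequently $T:=\ft\mu-\nu$ vanishes on odd test functions. For even test functions, differentiating $\langle T,\phi_z\rangle=0$ in $z$ at $z=1$---justified by the uniform summability estimates of step 1 on compacta in $\{\re z>0\}$, which allow exchanging derivative and sum---yields $\langle T,x^{2n}e^{-\pi x^2}\rangle=0$ for every $n\geq 0$. The even Hermite functions $h_{2n}(x)=P_{2n}(x)e^{-\pi x^2}$ are finite linear combinations of $\{x^{2n}e^{-\pi x^2}\}$ and span a dense subspace of the even part of $\mathcal{S}(\r)$ in the Fr\'echet topology; continuity of $T$ (step 1) therefore forces $T\equiv 0$ on all of $\mathcal{S}(\r)$. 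Restricting to $\phi\in C_c^\infty(\r)$ produces the $\fs$-pair condition, and since $\supp(\mu)$ and $\supp(\nu)$ are locally finite, $(\mu,\ft\mu)\in\cm$. The main technical obstacle I anticipate is the rigorous justification of differentiation under the sum together with the Hermite density step; a clean alternative is to evaluate the identity directly on shifted Gaussians $e^{-\pi z(x-a)^2}$ and use the symmetry of $\mu$ to bypass appealing to density.
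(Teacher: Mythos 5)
Your proposal is correct and follows essentially the same route as the paper: test the distributional identity against a one-parameter family of Gaussians, let the modular functional equation $G(w)=F(-1/w)\sqrt{i/w}$ produce the identity on that family, and then extend by density. The paper works with $g_z(t)=e^{\pi i z t^2}$ for $z\in\H$ while you use $\phi_z(x)=e^{-\pi z x^2}$ for $\re z>0$; these are the same family after the rotation $z\mapsto iz$, so the computations are equivalent.

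The only point of divergence is the density step. The paper disposes of it by citing the approximation argument of \cite[Lemma 2.1]{CG} (which in turn rests on density of linear combinations of shifted Gaussians in $\S(\r)$). You instead observe that $z\mapsto\langle T,\phi_z\rangle$ is analytic on $\{\re z>0\}$ and identically zero, so all $z$-derivatives at $z=1$ vanish; this gives $\langle T,x^{2n}e^{-\pi x^2}\rangle=0$ for all $n$, and together with the parity argument (both $\ft\mu$ and $\nu$ are even, so $T$ kills odd functions) and density of the Hermite system in $\S(\r)$ you conclude $T=0$. This is a clean, self-contained replacement for the external reference; the analyticity of $z\mapsto\phi_z\in\S(\r)$ and the Fr\'echet convergence of Hermite expansions of Schwartz functions, which you invoke implicitly, are both standard facts, so the argument is rigorous. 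Your final remark about shifted Gaussians is in fact closer in spirit to what \cite{CG} does.

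One cosmetic point worth flagging: the lemma statement as printed has a typo, $\del_{-\sqrt{2\ga_n}}+\del_{-\sqrt{2\ga_n}}$ where $\del_{\sqrt{2\ga_n}}+\del_{-\sqrt{2\ga_n}}$ is clearly intended; you silently read it correctly (your estimate uses both $\phi(\sqrt{2\gamma_n})$ and $\phi(-\sqrt{2\gamma_n})$), which is the right interpretation and is consistent with the symmetry needed in your parity step.
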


\begin{proof}
The proof is inspired by \cite[Lemma 2.1]{CG}. Consider the gaussian $g_z(t)=e^{\pi i z t^2}$ for $z\in\H$. Observe that $\ft g_z(t)=\sqrt{i/z}g_{-1/z}(t)$ and that $g_z(\sqrt{2s})=e^{2\pi i z s}$ for $s\geq 0$.  Then the identity
\begin{align*}
G(z)=\sum_{n\geq 0} \E G(\la_n)  g_z(\sqrt{2\la_n}) = F(-1/z)\sqrt{i/z} = \sum_{n\geq 0} \E F(\ga_n) \ft g_{z}(\sqrt{2\ga_n})
\end{align*}
shows that
$$
\sum_{n\geq 0} \E F(\ga_n) (\ft \p(\sqrt{2\ga_n})+\ft \p(-\sqrt{2\ga_n})) = \sum_{n\geq 0} \E G(\la_n) (\p(\sqrt{2\la_n})+\p(-\sqrt{2\la_n})),
$$
for any $\p\in {\rm span}_{\cp}\{g_z : z\in \H\}$. An approximation argument similar to the one employed in \cite[Lemma 2.1]{CG} shows that the above identity actually holds for any $\p\in \mathcal{S}(\r)$.
\end{proof}

The following constructions were inspired by a similar one partially communicated in 2018 by Danylo Radchenko in a seminar at the University of Bonn.

\subsubsection{\underline{$N$-Level eta-quotients and self-dual crystalline measures}}  Let $N\geq 1$ be an integer and $\br = \{r_d\}_{d|N}$ be a sequence of reals indexed by the divisors of $N$ such that
\begin{align}\label{eq:rcond}
r_d=r_{N/d}, \quad \sum_{d|N}r_d = 1 \ \text{ and } \  \frac{1}{24}\sum_{d|N}dr_d = b,
\end{align} 
where $c\geq 0$. Consider the eta-quotient
$$
\eta(\br,z) = \prod_{d|N} \eta(dz)^{r_d} = q^{b}\sum_{n\geq 0}\al_n q^{n}.
$$
The eta-function $\eta:\H\to\cp$ is an holomorphic function that satisfies the functional identities $\eta(z+\ell)=e^{\pi i  \ell /12}\eta(z)$, for $\ell\in\z$, and $\eta(-1/z)=\sqrt{z/i}\, \eta(z)$. In particular we obtain that 
\begin{align*}
\eta(\br,z+1) = e^{2\pi i b}\eta(\br,z)
\end{align*}
\begin{align*}
\eta(\br,-1/z) = \prod_{d|N} \eta(-d/z)^{r_d} = \prod_{d|N}(z/(di))^{r_d/2} \prod_{d|N} \eta(z/d)^{r_d} & = \sqrt{z/(i\sqrt{N})} \prod_{d|N} \eta(dz/N)^{r_{N/d}} \\
& = \sqrt{z/(i\sqrt{N})}\eta(\br,z/N).
\end{align*}
Above we used that $\prod_{d|N}d^{-r_d/2}=N^{-1/4}$, which follows from the conditions on $r_d$. Eta-quotients of this type are known to be weakly holomorphic functions of weight $1/2$ on $\Gamma_0(N)$ (see \cite[Prop. 1.39]{Bh}). In particular, we conclude that the function
$$
F(\br,z)=\eta(\br,z/\sqrt{N}) = q^{b/\sqrt{N}}\sum_{n\geq  k}\al_n q^{n/\sqrt{N}}.
$$
satisfies the functional equations 
\begin{align*}
F(\br,z+\sqrt{N})=e^{2\pi i b}F(\br,z) \quad \text{and} \quad
\sqrt{i/z}\,F(\br,-1/z)=F(\br,z)
\end{align*}
for all $z\in \H$. 
Since the function $(\im z)^{1/4} |\eta(\br,z)|$ is $\Gamma_0(N)$-invariant, one can straightforwardly deduce that $|\al_n| \ll n^k$, for some $k>0$, whenever $\eta(\br,z)$ grows at most polynomially at all cusps of $\Gamma_0(N)$ (one can even obtain the Hecke-bound $|\al_n| \ll n^{1/4}$ if $\eta(\br,z)$ vanishes exponentially at all cusps of $\Gamma_0(N)$). This condition turns out to be equivalent to the verification of at most $d(N)$-linear inequalities given by\footnote{We are thankful to D. Radchenko for pointing this out.} \cite[Lemma 1.40]{Bh}. Assuming that such inequalities are satisfied, $F$ satisfies the hypotheses of Lemma \ref{lem:FG}, and the measure
$$
\mu = \sum_{n\geq 0}\al_n (\del_{\sqrt{2(n+b)/\sqrt{N}}} + \del_{-\sqrt{2(n+b)/\sqrt{N}}}) \quad \text{satisfies } \quad \ft \mu=\mu.
$$
It is now obvious that one can build infinitely many examples of self-dual crystalline measures $\mu$ by making linear combination of these types of eta-quotients. 

For instance, consider now the following family
\begin{align*}
\frac{\eta(z)^{24c-2}\eta(4z)^{24c-2}}{\eta(2z)^{48c-5}} & = q^{c}\left(1-(24c - 2)q + (288c^2 - 36c)q^2 + \sum_{n\geq 3} \al_{n,c} q^n \right),
\end{align*}
which fits the previous discussion for $N=4$, $(r_1,r_2,r_4)=(24c-2,5-48c,24c-2)$  and $b=c$. We will have to impose $c\in [0,1/8]$,  as this range guarantees that the coefficients satisfy that Hecke bound $|\al_{n,c}| \ll n^{1/4}$, since it will be holomorphic at the cusps $0$, $i\infty$ and $1/2$ of $\Gamma_0(4)$ (actually, numerically, the coefficients  $\al_{n,c}$ seem to be bounded). The example above produces a measure $\mu_{c}$ such that
$$
\mu_{c}= \ft \mu_{c} \quad \text{and}\quad \supp(\mu_{c})\subset \{\sqrt{c+n}\}_{n\geq 0} \cup \{-\sqrt{c+n}\}_{n\geq 0}.
$$
Note that $\mu_{0}$  is classical Poisson summation and $\mu_{\tfrac19}$ is Guinand's example. Perhaps other interesting constructions may be derived from \cite{Lem} and the references therein. It is not hard to show that $\{ \al_{n,c}\}_{n\geq 0}$ are polynomials in $c$ with integer coefficients and degree $n$. To finish the discussion, note that if $c$ is irrational then $\{\sqrt{c+n}\}_{n\geq 0}$ intersects any infinite arithmetic progression at most $2$ times. Indeed, having three hits implies that
$$
\frac{\sqrt{n_2+c}-\sqrt{n_1+c}}{k_1} = \frac{\sqrt{n_3+c}-\sqrt{n_2+c}}{k_2}
$$ 
is satisfied for some integers $n_3>n_2>n_1\geq 0$ and $k_1,k_2>0$. Writing $A=n_2+c$, $n_3+c=A+k_3$, $n_1+c=A-k_4$ and solving for $A$, shows that $A$ is rational, which is absurd.

Such measures are also connected with new Fourier interpolation formulae produced in \cite{RV}, which as a byproduct produces crystalline pairs with space and spectral supports contained in $\{\pm \sqrt{n}\}_{n\geq 0}\cup \{x_0\}$, for any $x_0$ not an integer square root.  These nodes can also be perturbed to be $\{\pm \sqrt{n+\ep_n}\}_{n\geq 0}$, as long as $\ep_0=0$ and $|\ep_n|\leq \delta n^{-5/4}$ for some small $\delta>0$, see \cite{RS1}. The recent  papers \cite{RS,KNS} also suggest that there should exist a crystalline pair with space and spectral supports contained in $\{\pm(\tfrac1{2\al} n)^{\al}\}_{n\geq 0}$ and $\{\pm(\tfrac1{2(1-\al)} n)^{1-\al}\}_{n\geq 0}$ respectively, whenever $0<\al<1$. Moreover, \cite[Cor. 3]{KNS} proves the striking result that for every (supercritical) pair of unbounded increasing sequences $\T=\{t_j\}_{j\in \z}$ and $\S=\{s_j\}_{j\in \z}$ such that
$$
\max\left(\limsup_{j\to\infty} |t_j|^{p-1}(t_{j+1}-t_j),\limsup_{j\to\infty} |s_j|^{q-1}(s_{j+1}-s_j)\right) <\frac12,
$$
for some $1<p,q<\infty$ with $\frac{1}{p}+\frac{1}{q}=1$, there is a $\fs$-pair $(\mu,a)$ with $\mu$ supported in $\T$ and $\a$ supported in $\S$. An important simple example is again $\T=\{\pm (\tfrac{a}{2\al} n)^{\al}\}_{n \geq 0}$ and $\S=\{\pm (\tfrac{b}{2\be}n)^{\be}\}_{n\geq 0}$ for $0<\al,\be,a,b<1$ with $\al+\be\leq 1$ (the multiplying constants $a$ and $b$ are irrelevant unless $\al+\be=1$, in which case we must impose $ab<1$, but conjecturally $ab=1$ is possible). In fact, $(\T,\S)$ is both a Fourier uniqueness pair and has an associated reconstruction/interpolation formula (for more info see \cite{KNS}).

\subsection{Almost Crystalline Measures}\label{sec:ACM}
We say $(\mu,a)$ is an almost Fourier summation pair ($\afs$-pair) if $\mu$ is strongly tempered, $\a$ is locally summable function and 
$$
(\ft \mu -a)|_{C_c^\infty(\r)}=\nu |_{C_c^\infty(\r)},
$$
for some absolutely continuous complex-measure $\nu$.  In particular, if $\p\in C_c^\infty(\r)$ we have
$$
\sum_{\la\in \r} a(\la) \p(\la)   + \int_\r  \p(t) \d \nu(t)   = \int_\r  \ft \p(t) \d \mu(t).
$$
One way to think about this that $\nu$ is the absolutely continuous part of $\ft \mu$, while $a(\cdot)$ is the (atomic) singular part. We then let the class $\acm$ (Almost Crystalline Measures) denote the $\afs$-pairs $(\mu,a)$ such that both $\mu$ and $\a$ have locally finite support.  The typical example is Guinand's prime summation formula (also known as Riemann–Weil explicit formula), which states that for every $\p\in C_c^\infty(\r)$ we have
\begin{align*}
& \frac{\log (\pi/N)}{2\pi}\p(0) + \frac{1}{2\pi}\sum_{n\geq 2} \frac{\Lambda(n)\chi(n)}{\sqrt{n}}\bigg(\p\left(\frac{\log n}{2\pi}\right) + \p\left(\frac{-\log n}{2\pi}\right)\bigg) - 2\int_\r \p(t) \cosh(\pi t) \d t  \\
& = -\sum_{\ga}  \ft \p(\ga) + \frac{1}{2\pi } \int_\r \ft \p (t) \Re \psi(\tfrac14+i\tfrac{t}2)\d t
\end{align*}
where $\psi(z)=\Gamma'(z)/\Gamma(z)$ is the digamma function, $\Lambda(n)$ is the von Mangoldt function, $\rho=1/2+i\ga$ are the zeros of the Dirichlet $L$-function $$L(\chi,s)=\sum_{n\geq 1}\chi(n)n^{-s}$$  on the critical strip $0<\Re \rho<1$ and $\chi$ is a primitive, real and even Dirichlet character of modulus $N$. Since $|\psi(1/4+it/2)| \ll \log (2+|t|)$ and $\sum_{\ga} \frac{1}{1+\ga^2} < \infty$, we conclude, assuming the Riemann Hypothesis for $L(\chi,s)$, that if we let
\begin{align*}
a&=\frac{\log (\pi/N)}{2\pi}\1_0 + \frac{1}{2\pi}\sum_{n\geq 1} \frac{\Lambda(n)\chi(n)}{\sqrt{n}}(\1_{\frac{\log n}{2\pi}} + \1_{-\frac{\log n}{2\pi}} ) \\
\nu &= -2\cosh(\pi t)\d t  \\
\mu &= -\sum_{\ga}  \del_{\ga} +  \frac{1}{2\pi}\Re \psi(\tfrac14+i\tfrac{t}2)\d t
\end{align*}
then $(\mu,a) \in \acm$. The difference of any two of such pairs above for two characters $\chi_1$ and $\chi_2$ real, even and primitive, but different moduli $N$, produces a pair in $\cm$, this was pointed out first by Guinand \cite{Gui}, and we explain how this construction fits our framework in the end of Remark \ref{rem:genamu}. Such measures are also connected with the remarkable new Fourier interpolation formulae produced in \cite{BRS}. Assuming all non-trivial zeros of $L(\chi,s)$ have real part $1/2$ and are simple, these new formulae produce $\cm$-pairs $(\mu,a)$ with $\supp(\mu)=\{\ga \in \r : L(\chi,1/2+i\ga)=0\}\cup\{\ga_0\}$ and $\supp(a)=\{ \sgn(n)\frac{\log |n|}{4\pi}\}_{n\geq 1}$, where $\ga_0$ is any real such that $L(\chi,1/2+i\ga_0)\neq 0$.

\section*{Acknowledgments}
We thank Oleksiy Klurman and Bryce Kerr for all the conversations we had around this topic during their visit to Rio. Theorems do come out of the smoke. We thank Friedrich Littmann for the fruitful conversations and comments which inspired Theorem 5. We also thank Michael Baake and Peter Sarnak for further discussions, references and remarks after the first version of this paper. The author acknowledges support from the following funding agencies: The Office of Naval Research GRANT14201749 (award number N629092412126), The Serrapilheira Institute (Serra-2211-41824), FAPERJ (E-26/200.209/2023) and CNPq (309910/2023-4).


\end{document}